\newtheorem{theorem}{Theorem}
\newtheorem{corollary}[theorem]{Corollary}
\newtheorem{proposition}[theorem]{Proposition}
\newtheorem{lemma}[theorem]{Lemma}
\theoremstyle{definition}
\newtheorem{remark}[theorem]{Remark}
\newtheorem{definition}[theorem]{Definition}
\numberwithin{theorem}{section}
\numberwithin{equation}{section}
\newcommand{\beq}{\begin{equation}}
\newcommand{\eeq}{\end{equation}}
\newcommand{\R}{\mathbb{R}}
\newcommand{\mf}[1]{\mathbf{#1}}
\newcommand{\bs}[1]{\boldsymbol{#1}}
\newcommand{\pa}{\partial}
\renewcommand{\S}{\mathbb{S}}
\newcommand{\ssubset}{\subset\joinrel\subset}
\newcommand{\dist}{{\rm dist}}
\newcommand{\supp}{{\rm supp}}
\newcommand{\weak}{\rightharpoonup}
\newcommand{\eps}{\varepsilon}
\DeclareMathOperator{\loc}{loc}
\renewcommand{\epsilon}{\varepsilon}
\author[N. Soave and S. Terracini]{Nicola Soave and Susanna Terracini}\thanks{}
\address{Nicola Soave and Susanna Terracini\newline \indent
 Dipartimento di Matematica ``Giuseppe Peano'', Universit\`a di Torino, \newline \indent
Via Carlo Alberto 10,
10123 Torino, Italy}
\email{nicola.soave@unito.it, susanna.terracini@unito.it}
\title[On partial segregation models: H\"older estimates and properties of the limits]{On some singularly perturbed elliptic systems modeling partial segregation: uniform H\"older estimates and basic properties of the limits}
\keywords{Competition-diffusion systems, ternary interactions, Alt-Caffarelli-Friedman monotonicity formula; singularly perturbed elliptic systems;  Liouville-type theorems}
\subjclass[2020]{35R35; 35B25 (35J61; 35J47))}
\thanks{N.S. is partially supported by the PRIN Project no. 2022R537CS ``Nodal Optimization, NOnlinear elliptic equations, NOnlocal geometric problems, with a focus on regularity ($NO^3$)", and S. T. is partially supported by the PRIN 2022 project 20227HX33Z -- \emph{Pattern formation in nonlinear phenomena},  funded by European Union - Next Generation EU within the PRIN 2022 program (D.D. 104 - 02$/$02$/$2022 Ministero dell\'Universit\`a e della Ricerca, Italy). Both authors are affiliated to the INDAM - GNAMPA group.\\
Declarations of interest: none.}
\begin{document}
\begin{abstract}
We prove uniform H\"older estimates in a class of singularly perturbed competition-diffusion elliptic systems, with the particular feature that the interactions between the components occur three by three (ternary interactions).  These systems are associated to the minimization of Gross-Pitaevski energies modeling ternary mixture of ultracold gases and other multicomponent liquids and gases. We address the question whether this regularity holds uniformly throughout the approximation process up to the limiting profiles, answering positively. A very relevant feature of limiting profiles in this process is that they are only partially segregated, giving rise to new phenomena of geometric pattern formation and optimal regularity.
\end{abstract} 

\maketitle

\section{introduction}

Various phenomena in physics, biology, chemistry, or pure mathematics can be described using reaction-diffusion systems with strongly competitive interaction. The intensity of this competition is parametrised by a singular perturbation coefficient $\beta$: as $\beta$ tends to infinity, the densities tend to segregate, giving rise to the formation of geometric  patterns in the domain. The study of the asymptotic behavior of solutions in the singular perturbation limit, as well as the properties of the limiting profile in terms of density regularity and geometry of the free boundary, has been the subject of extensive research (we postpone a detailed review of what is known in the literature to the following paragraphs). A common feature of almost all the published contributions is that they focus on the case of strong binary interactions (species interact pairwise) and give rise, in the limit, to models featuring full segregation, namely in the limit different densities have disjoint positivity sets. In this paper we deal instead with a variational model with \emph{ternary interactions}  yielding to \emph{partial segregation} in the limit. We emphasize that phase separation driven by multiple interactions has come to the fore in recent physical  literature on multicomponent liquids and gases and calls for mathematical explanation (cfr e.g. \cite{LQZ24,Pet14} and references therein).

In the simplest possible setting, let $\Omega \subset \R^N$ be a bounded smooth domain, and let $(\varphi_1, \varphi_2, \varphi_3)$ be a triplet of nonnegative Lipschitz continuous functions in $\overline{\Omega}$ satisfying the \emph{partial segregation condition} 
\beq\label{PSC}
\psi_1\,\psi_2\,\psi_3 \equiv 0 \quad \text{in $\overline{\Omega}$};
\eeq
namely, $\psi_1$, $\psi_2$ and $\psi_3$ cannot be all positive at the same point $x_0$, but their positivity sets can overlap pairwise. Our goal is to analyze the asymptotic behavior as $\beta \to +\infty$ of solutions to the boundary value problem
\beq\label{PSP}
\begin{cases} 
\displaystyle \Delta u_i = \beta u_i \prod_{\substack{j=1\\ j \neq i}}^3 u_j^2, \quad u_i>0 & \text{in }\Omega\\
u_i = \psi_i & \text{on }\partial \Omega,
\end{cases}
\eeq
for $i=1,2,3$. It is well established that, for every $\beta>0$ fixed, a classical solution $\mf{u}_\beta$ of the problem can be found by minimizing the energy with a ternary interaction term
\[
J_\beta(\mf{u}, \Omega):= \int_{\Omega} \big(\sum_{i=1}^3 |\nabla u_i|^2 + \beta \prod_{j=1}^3 u_j^2\big)\,dx
\]
in a Sobolev space of functions with fixed traces. It is also plain that, by minimality,
\[
J_\beta(\mf{u}_\beta,\Omega) \le J_\beta(\boldsymbol{\psi},\Omega) =  \int_\Omega \sum_{i=1}^3|\nabla \psi_i|^2\,dx,
\]
thanks to the partial segregation condition of $\boldsymbol{\psi}$. This implies that, up to a subsequence, $\mf{u}_\beta \rightharpoonup \tilde{\mf{u}}$ weakly in $H^1(\Omega,\R^3)$, and that the limit itself satisfies the partial segregation condition $\tilde u_1\,\tilde u_2\,\tilde u_3 \equiv 0$ a.e. in $\Omega$. We address the  following natural issues, closely related to each other:
\begin{itemize}
\item[($i$)] since $\mf{u}_\beta$ is smooth for each $\beta>0$ fixed, can we prove uniform-in-$\beta$ a priori bounds in some H\"older or higher-order Sobolev space?
\item[($ii$)] Can we describe the properties and the regularity of the limit $\tilde{\mf{u}}$?
\item[($iii$)] Can we describe the properties and the regularity of the \emph{free boundary} $\bigcup_1^3 \pa\{\tilde u_i>0\}$ and of the nodal set
 $\{x \in \Omega: u_i(x)=0 =u_j(x) \text{ for at least two indexes } i \neq j\}$?
\end{itemize} 
These questions will be answered in this paper and in its companion \cite{ST2024} (Part 2). More precisely, in this paper we establish uniform H\"older bounds for family of minimizers, with a small exponent $\bar\nu$, and derive the basic properties of the limiting profile. In \cite{ST2024}, we better investigate the optimal regularity of the limit, and discuss the geometry of the free boundary and its regularity.   
\subsection{Main results} As said, our goal is to describe the asymptotic behavior, as $\beta \to +\infty$, of minimal solutions to \eqref{PSP}, with the boundary data $\boldsymbol{\psi}$ satisfying the partial segregation condition \eqref{PSC}. Actually, we consider a more general setting which allows to deal with family of solutions defined on different domains. Let $\Omega \subset \R^N$ be a domain (not necessarily bounded), and let $\{\mf{u}_\beta=(u_{1,\beta}, u_{2,\beta}, u_{3,\beta})\}_{\beta >1} \subset H^1_{\loc}(\Omega)$ be a family of weak solutions to  
\begin{equation}\label{P beta}
\Delta u_{i} = \beta u_i \prod_{\substack{j \neq i}} u_{j}^2, \quad u_{i}>0 \quad \text{in $\Omega$, for $i=1,2,3$},
\end{equation}
under the following assumptions:
\begin{itemize}
\item[(h1)] $\{\mf{u}_\beta\}$ is uniformly bounded in $L^\infty(\Omega)$, namely $\|\mf{u}_\beta\|_{L^\infty(\Omega)} \le C$ for a positive constant $C>0$ independent of $\beta$.
\item[(h2)] $\mf{u}_\beta$ is a minimizer of \eqref{P beta} with respect to variations with compact support, in the sense for every $\Omega' \ssubset \Omega$
\[
J_\beta(\mf{u}_{\beta}, \Omega') \le J_\beta(\mf{u}_\beta + \bs{\varphi}, \Omega') \qquad \forall \bs{\varphi} \in H_0^1(\Omega', \R^3),
\]
where 
\beq\label{def functional}
J_\beta(\mf{u}, \Omega') := \int_{\Omega'} \sum_{i=1}^3 |\nabla u_i|^2 \,dx+ \beta \int_{\Omega'} \prod_{j=1}^3 u_j^2\,dx.
\eeq
\end{itemize}

Our aim is to prove the validity of uniform $C^{0,\alpha}$ bounds, for some ``small" $\alpha \in (0,1)$, and to characterize the basic properties of the limit problem. 

\begin{theorem}[Local uniform bounds in H\"older spaces]\label{thm: holder bounds}
Let $\mf{u}_\beta =(u_{1,\beta}, u_{2,\beta}, u_{3,\beta})$ be a solution of \eqref{P beta} at fixed $\beta>1$. Suppose that (h1) and (h2) hold. Then there exists $\bar \nu \in (0,1)$ depending on the dimension only such that, for every compact set $K \ssubset \Omega$, and for every $\alpha \in (0,\bar\nu)$, we have that 
\[
\|\mf{u}_\beta\|_{C^{0,\alpha}(K)} \le C.
\]
Moreover, as $\beta \to +\infty$
\begin{gather*}
\mf{u}_\beta \to \tilde{\mf{u}} \quad \text{in $H^1_{\loc}(\Omega)$ and in $C^{0,\alpha}_{\loc}(\Omega)$, for every $\alpha \in (0, \bar \nu)$, and} \\
\beta \int_{\omega} \prod_{j=1}^3 u_{j,\beta}^2\,dx \to 0 \quad \text{for every $\omega \ssubset \Omega$}
\end{gather*}
up to a subsequence.
\end{theorem}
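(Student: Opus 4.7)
I will follow the classical blow-up and compactness scheme for strongly competing elliptic systems, adapting the binary (pairwise) case to the present ternary interaction. Fix $\Omega' \ssubset \Omega'' \ssubset \Omega$ throughout. The first step is a uniform energy bound. Using (h2) against a competitor $\mathbf{w}$ coinciding with $\mathbf{u}_\beta$ on $\Omega'' \setminus \Omega'$ and satisfying the partial segregation condition $w_1 w_2 w_3 \equiv 0$ on $\Omega'$ -- constructed by suitably truncating pairs of components in a tubular neighborhood of their joint positivity and exploiting (h1) to control the $H^1$-cost -- minimality yields
\[
J_\beta(\mathbf{u}_\beta, \Omega'') \le J_\beta(\mathbf{w}, \Omega'') \le C,
\]
uniformly in $\beta$. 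Hence $\mathbf{u}_\beta$ is uniformly bounded in $H^1_{\loc}(\Omega)$, and the \emph{cost of segregation} $\beta \int_{\Omega'} \prod_{j=1}^3 u_{j,\beta}^2 \, dx \le C$. Moreover $\Delta u_{i,\beta} \ge 0$, so each component is subharmonic.

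The second step is an almost-monotonicity formula yielding Hölder decay. For every pair $i \ne j$, the Alt--Caffarelli--Friedman functional
\[
\Phi_{ij}(x_0, r) = \prod_{k \in \{i,j\}} \frac{1}{r^2} \int_{B_r(x_0)} \frac{|\nabla u_{k,\beta}|^2}{|x-x_0|^{N-2}} \, dx
\]
would be monotone if $u_i u_j \equiv 0$; in our setting it is only \emph{almost monotone}, with a residual supported on the joint positivity $\{u_i, u_j > 0\}$, where the third component $u_k$ is small. The ternary structure allows to absorb the resulting error via the uniform bound $\beta \int \prod_j u_{j,\beta}^2 \le C$ from Step 1 together with the Friedland--Hayman spherical inequality for partitions of $\mathbb{S}^{N-1}$. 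This produces Morrey-type decay
\[
\int_{B_r(x_0)} |\nabla u_{i,\beta}|^2 \, dx \le C r^{N-2 + 2\bar\nu}, \qquad \bar\nu = \bar\nu(N) > 0,
\]
and the uniform $C^{0,\alpha}$ bound for $\alpha < \bar\nu$ then follows from Morrey--Campanato embedding. An equivalent route is a blow-up--contradiction argument: failure of the bound produces rescaled subsequences converging to an entire, partially segregated limiting profile contradicting a Liouville-type rigidity statement.

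The third step concerns the convergence as $\beta \to \infty$. Uniform Hölder and $H^1_{\loc}$ bounds yield, up to a subsequence, $\mathbf{u}_\beta \to \tilde{\mathbf{u}}$ in $C^{0,\alpha}_{\loc}(\Omega)$ and weakly in $H^1_{\loc}(\Omega)$. To upgrade to strong $H^1_{\loc}$ convergence and to show vanishing of the cost of segregation, test \eqref{P beta} by $\eta^2 u_{i,\beta}$ with $\eta \in C_c^\infty(\Omega)$ and sum over $i$, using $\sum_i u_{i,\beta}^2 \prod_{j \ne i} u_{j,\beta}^2 = 3 \prod_j u_{j,\beta}^2$:
\[
\sum_{i=1}^3 \int \eta^2 |\nabla u_{i,\beta}|^2 \, dx + 3\beta \int \eta^2 \prod_{j=1}^3 u_{j,\beta}^2 \, dx = -2 \sum_{i=1}^3 \int \eta\, u_{i,\beta} \nabla u_{i,\beta} \cdot \nabla \eta \, dx.
\]
Uniform convergence of $u_{i,\beta}$ and weak convergence of $\nabla u_{i,\beta}$ make the right-hand side pass to its natural limit. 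Since $\tilde{\mathbf{u}}$ fulfills the partial segregation condition, it is an admissible competitor in \eqref{def functional} as $\beta \to \infty$; comparing with lower semicontinuity of the Dirichlet part forces $\sum_i \int \eta^2 |\nabla u_{i,\beta}|^2 \to \sum_i \int \eta^2 |\nabla \tilde u_i|^2$, giving both strong $H^1_{\loc}$ convergence and $\beta \int_\omega \prod_j u_{j,\beta}^2 \, dx \to 0$.

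\emph{Main obstacle.} The technical heart is the almost-monotonicity formula adapted to \emph{ternary} interactions. In the classical binary scenario the limit exhibits full segregation, so disjoint positivity sets make the Alt--Caffarelli--Friedman inequality immediately applicable and one reaches any exponent $\alpha < 1$. Here only the triple product vanishes in the limit, pairs of densities may coexist, and the error in the differentiation of $\Phi_{ij}$ must be absorbed using the three-body coupling $\beta u_i^2 u_j^2 u_k^2$; this balancing act, together with the spherical partition estimate giving the concrete value of $\bar\nu = \bar\nu(N)$ and the companion Liouville-type rigidity for entire partially segregated profiles, is the genuinely new ingredient compared with the binary theory.
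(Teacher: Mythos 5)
Your overall skeleton (Caccioppoli/energy bound $\Rightarrow$ monotonicity $\Rightarrow$ decay or blow-up + Liouville $\Rightarrow$ compactness) is the right template, and Step 3 is essentially sound. However, Step 2 contains a conceptual error that breaks the argument, and it is precisely at the point you flag as the ``technical heart''.

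You propose to use the \emph{pairwise} Alt--Caffarelli--Friedman functional $\Phi_{ij}$ and argue it is ``almost monotone'' with an error ``supported on the joint positivity $\{u_i, u_j>0\}$, where the third component $u_k$ is small.'' This does not work. In the partial segregation regime, pairs of components are genuinely allowed to coexist: the positivity sets $\{u_i>0\}\cap S_r$ and $\{u_j>0\}\cap S_r$ can overlap on sets of large (even full) measure, and there is no reason whatsoever for $u_k$ to be small there. The mechanism behind ACF monotonicity is the Friedland--Hayman lower bound $\gamma(\lambda(u_i))+\gamma(\lambda(u_j))\ge 2$ \emph{for disjoint spherical supports}; if $u_i$ and $u_j$ are both positive on most of $S_r$, both $\lambda(u_i)$ and $\lambda(u_j)$ can be zero, the differential inequality for $\Phi_{ij}$ gives no decay at all, and no amount of bookkeeping with $\beta\int\prod_j u_j^2$ repairs it --- that quantity only controls the triple overlap, not the pairwise overlap. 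Your claim that $u_k$ is small on $\{u_i,u_j>0\}$ is exactly what one is trying to prove (in the limit), so the argument is circular.

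The paper avoids this by introducing a genuinely \emph{ternary} ACF quantity
\[
J_\nu(u_1,u_2,u_3,x_0,r)=\frac{1}{r^{2\nu}}\prod_{j=1}^{3} I(u_j,x_0,r),\qquad I(u,x_0,r)=\int_{B_r(x_0)}\frac{|\nabla u|^2}{|x-x_0|^{N-2}}\,dx,
\]
paired with a \emph{new} spherical optimization problem \eqref{opp} in which the constraint is that the \emph{triple} product $u_1u_2u_3$ vanishes on $\S^{N-1}$, not pairwise orthogonality. This yields a strictly positive constant $\alpha_{3,N}\le 2$ (Lemma~\ref{lem: on ov}), whence $\bar\nu=\alpha_{3,N}/3$. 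The factor $1/3$ in the exponent --- which you do not derive --- comes precisely from there being three Dirichlet factors instead of two. This is not an ``almost monotone'' perturbation of a binary formula; it is a different monotone quantity tailored to partial segregation, and its perturbed version (Theorem~\ref{thm: acf per}) plays the role your ``error absorption'' would have to play.

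Two smaller points. First, the competitor construction in your Step 1 (``truncating pairs of components in a tubular neighborhood of their joint positivity'') is not needed and is itself problematic: controlling the $H^1$-cost of such a truncation uniformly in $\beta$ already presupposes regularity. The bounds $\|\nabla u_{i,\beta}\|_{L^2_{\loc}}\le C$ and $\beta\int_{\omega}\prod_j u_{j,\beta}^2\le C$ follow directly and cleanly by testing the $i$-th equation with $\eta^2 u_{i,\beta}$ and using (h1), since $u_{i,\beta}^2\prod_{j\ne i}u_{j,\beta}^2=\prod_j u_{j,\beta}^2$. Second, even granting a monotonicity formula, the ``equivalent route'' of Morrey decay at finite $\beta$ is not really equivalent: the formula (ternary or perturbed) does not yield a clean decay rate directly for $\mf u_\beta$, and the actual proof is the blow-up/contradiction argument (built on Lemmas~\ref{lem: basic prop}--\ref{lem: v bdd e M unb}), together with the Liouville theorems of Section~\ref{sec: liou}, which you mention only in passing and which require a nontrivial case analysis (how many components of the blow-up sequence are unbounded at the origin, whether $M_n$ stays bounded, and a minimality argument --- Lemma~\ref{lem: min one 0} --- to rule out the final case).
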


It is immediate to check that such assumptions are satisfied if $\{\mf{u}_\beta\}$ is a family of solutions to \eqref{PSP} obtained by minimizing $J_\beta(\cdot\,,\Omega)$ in a space of functions with fixed Lipschitz traces. In such case, we can prove that the uniform bounds extend \emph{up to the boundary}, and the limiting profile can be characterized as minimizer of a natural limit problem.


\begin{theorem}[Minimizers with fixed traces]\label{thm: min fixed traces}
Let $\Omega \subset \R^N$ be a bounded domain with $C^{1}$ boundary, let $\boldsymbol{\psi}=(\psi_1,\psi_2,\psi_3)$ be a nonnegative Lipschitz continuous function on $\overline{\Omega}$ satisfying the partial segregation condition \eqref{PSC}. For every $\beta>1$ fixed, let $\mf{u}_\beta =(u_{1,\beta}, u_{2,\beta}, u_{3,\beta})$ be a minimizer of $J_\beta(\cdot\,,\Omega)$ in
\[
H_{\boldsymbol{\psi}}:=\{ \mf{u} \in H^1(\Omega,\R^3): \ \mf{u}-\boldsymbol{\psi} \in H_0^1(\Omega,\R^3)\}.
\]
Then there exists a universal $\bar \nu \in (0,1)$ depending on the dimension only such that
\beq\label{global bounds}
\|\mf{u}_\beta\|_{C^{0,\alpha}(\overline{\Omega})} \le C,
\eeq
with $C>0$ independent of $\beta$. Moreover, as $\beta \to +\infty$
\begin{gather*}
\mf{u}_\beta \to \tilde{\mf{u}} \quad \text{in $H^1(\Omega)$ and in $C^{0,\alpha}(\overline{\Omega})$, for every $\alpha \in (0, \bar \nu)$, and} \\
\beta \int_{\Omega} \prod_{j=1}^3 u_{j,\beta}^2\,dx \to 0 
\end{gather*}
up to a subsequence. Finally, the limit $\tilde{\mf{u}}$ is a minimizer for the problem
\beq\label{def c infty}
c_\infty:= \min\left\{ \sum_{j=1}^3 \int_\Omega |\nabla u_i|^2\,dx \left|\begin{array}{l}   \mf{u}-\boldsymbol{\psi} \in H_0^1(\Omega,\R^3) \\ u_1\,u_2\,u_3 \equiv 0 \ \text{in $\Omega$}\end{array}\right.\right\}.
\eeq
\end{theorem}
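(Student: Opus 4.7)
We first check that the hypotheses of Theorem~\ref{thm: holder bounds} hold for any $\mf{u}_\beta$ minimizing $J_\beta$ in $H_{\boldsymbol{\psi}}$. Property (h2) is automatic, since a perturbation $\boldsymbol{\varphi} \in H^1_0(\Omega',\R^3)$ extended by zero to $\Omega$ is admissible in $H_{\boldsymbol{\psi}}$. For (h1), setting $M := \|\boldsymbol{\psi}\|_{L^\infty(\Omega)}$, the truncation $u_{i,\beta} \mapsto \min(u_{i,\beta}, M)$ preserves the trace, decreases $|\nabla u_{i,\beta}|$ and $u_{i,\beta}$ pointwise, and hence decreases $J_\beta$, forcing $u_{i,\beta} \le M$. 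Theorem~\ref{thm: holder bounds} thereby delivers interior H\"older bounds, $H^1_{\loc}$ and $C^{0,\alpha}_{\loc}$ convergence, and vanishing of the interaction integral on every $\omega \ssubset \Omega$. Using in addition $\boldsymbol{\psi}$ itself as a competitor -- admissible precisely because of \eqref{PSC} -- we obtain
\[
J_\beta(\mf{u}_\beta, \Omega) \;\le\; J_\beta(\boldsymbol{\psi}, \Omega) \;=\; \sum_{i=1}^3 \int_\Omega |\nabla \psi_i|^2\,dx \;=:\; E_0,
\]
giving a uniform $H^1(\Omega)$-bound and $\int_\Omega \prod_j u_{j,\beta}^2\,dx \le E_0/\beta \to 0$. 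Up to a subsequence, $\mf{u}_\beta \weak \tilde{\mf{u}}$ in $H^1(\Omega)$ with $\tilde u_1 \tilde u_2 \tilde u_3 \equiv 0$ a.e., and continuity of the trace gives $\tilde{\mf{u}} - \boldsymbol{\psi} \in H_0^1(\Omega,\R^3)$, so $\tilde{\mf{u}}$ is admissible in \eqref{def c infty}.

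\textbf{Up-to-the-boundary H\"older estimate.}
This is the decisive new step, as Theorem~\ref{thm: holder bounds} provides only interior control. The plan is to extend the problem past $\partial\Omega$ while preserving the partial-segregation constraint, and then to adapt the ingredients behind Theorem~\ref{thm: holder bounds} to boundary points. Since $\partial\Omega \in C^1$ and $\boldsymbol{\psi}$ is Lipschitz, $\boldsymbol{\psi}$ admits a Lipschitz extension $\tilde{\boldsymbol{\psi}}$ on a neighborhood $\Omega' \supset \overline{\Omega}$ still satisfying $\tilde\psi_1 \tilde\psi_2 \tilde\psi_3 \equiv 0$: after flattening $\partial\Omega$ locally, the reflection of each $\psi_i$ across the straightened boundary preserves the pointwise constraint at reflected points, and a careful patching exploiting the pairwise structure of the positivity sets extends the property globally. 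With $\tilde{\boldsymbol{\psi}}$ at hand, we reproduce the scheme underlying Theorem~\ref{thm: holder bounds} at a boundary point $x_0 \in \partial\Omega$: the Alt--Caffarelli--Friedman-type monotonicity formula retains its sign on half-balls centered at $x_0$; blow-up sequences $r_k^{-\bar\nu}\mf{u}_{\beta_k}(x_0 + r_k\,\cdot\,)$, compared with the corresponding rescalings of $\tilde{\boldsymbol{\psi}}$, are compact in $C^{0,\alpha}_{\loc}$; and a Liouville-type statement for functions on a half-space with the affine Dirichlet datum coming from the linearization of $\boldsymbol{\psi}$ at $x_0$ rules out a nontrivial rescaled limit. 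Verifying these boundary analogues of the monotonicity and Liouville ingredients is the main technical obstacle; granted them, the standard contradiction/compactness argument yields \eqref{global bounds}.

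\textbf{Strong convergence and variational characterization of $\tilde{\mf{u}}$.}
With \eqref{global bounds}, Arzel\`a--Ascoli together with interpolation between $C^0(\overline{\Omega})$ and $C^{0,\bar\nu}(\overline{\Omega})$ upgrade the weak convergence to convergence in $C^{0,\alpha}(\overline{\Omega})$ for every $\alpha < \bar\nu$. For the variational part, pick any admissible $\mf{v}$ in \eqref{def c infty}: $v_1 v_2 v_3 \equiv 0$ gives $J_\beta(\mf{v},\Omega) = \sum_i \int_\Omega |\nabla v_i|^2\,dx$, and the minimality of $\mf{u}_\beta$ yields
\[
\sum_{i=1}^3 \int_\Omega |\nabla u_{i,\beta}|^2\,dx \;+\; \beta \int_\Omega \prod_{j=1}^3 u_{j,\beta}^2\,dx \;\le\; \sum_{i=1}^3 \int_\Omega |\nabla v_i|^2\,dx.
\]
Weak lower semicontinuity in $H^1$ shows that $\tilde{\mf{u}}$ is a minimizer of \eqref{def c infty}; choosing $\mf{v} = \tilde{\mf{u}}$ turns the previous inequality into an equality, forcing $\sum_i \int_\Omega |\nabla u_{i,\beta}|^2\,dx \to \sum_i \int_\Omega |\nabla \tilde u_i|^2\,dx$ together with $\beta \int_\Omega \prod_j u_{j,\beta}^2\,dx \to 0$, and upgrading weak to strong $H^1(\Omega)$ convergence.
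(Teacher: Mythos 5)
Your first part (global energy bound, $L^\infty$ bound via truncation, interior estimates from Theorem~\ref{thm: holder bounds}, weak $H^1$ convergence, admissibility of $\tilde{\mf{u}}$) and your third part (the chain of inequalities that forces strong $H^1$ convergence, vanishing of $\beta\int\prod u_{j,\beta}^2$, and minimality of the limit) are both correct and match the paper's argument. In particular the paper's comparison of $c_\infty$, $\sum_i\int|\nabla\tilde u_i|^2$, $\liminf$ and $\limsup$ is exactly your mechanism. The truncation argument for the uniform $L^\infty$ bound is a valid alternative to the paper's one-line subharmonicity argument.

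The up-to-the-boundary estimate, however, is not a proof: it is a wish list of ingredients with several concrete defects. First, you propose to ``extend the problem past $\partial\Omega$,'' but $\mf{u}_\beta$ is only defined in $\Omega$ and solves the Euler--Lagrange system only there; you never explain how the extension of $\boldsymbol{\psi}$ (which is itself only asserted, not constructed) would let you apply interior machinery to $\mf{u}_\beta$ near $\partial\Omega$. Second, you claim the blow-up limit at a boundary point must be ruled out by a Liouville theorem for half-space harmonic functions with ``affine Dirichlet datum coming from the linearization of $\boldsymbol{\psi}$.'' This is incorrect: after scaling $v_{i,n}(x) = u_{i,n}(x_n+r_n x)/(L_n r_n^\alpha)$, the boundary datum $\psi_{i,n}(x)=\psi_i(x_n+r_n x)/(L_n r_n^\alpha)$ has oscillation of size $r_n^{1-\alpha}/L_n\to 0$ because $\psi_i$ is Lipschitz and $\alpha<1$, so the limiting Dirichlet datum is \emph{constant}, not affine. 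The paper's Lemma~\ref{lem: liou half} is precisely a Liouville statement for half-space harmonic functions with constant boundary data. Third, and most importantly, you simply assert that ``granted'' the monotonicity and Liouville ingredients ``the standard contradiction/compactness argument yields \eqref{global bounds}.'' That is not the case: the paper devotes an entire case analysis (Lemmas~\ref{lem: 0 half-space} through the concluding lemma) to the boundary situation, and the crucial structural step -- Lemma~\ref{lem: 0 half-space}, showing that by partial segregation of the trace and even reflection one component of the blow-up limit vanishes identically in the half-space -- has no analogue anywhere in your sketch. Without that step the exhaustive case analysis (two, one, or zero components unbounded at $0$) cannot be closed. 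In short, the boundary estimate is asserted rather than proved, and the one concrete claim you make about the structure of the blow-up (affine boundary data) is wrong.
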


\begin{remark}
The threshold $\bar \nu$ is defined in terms of an optimal ``overlapping partition problem" on the sphere, and more precisely we have $\bar \nu= \alpha_{3,N}/3$, with $\alpha_{3,N}$ defined by \eqref{opp} ahead. In Section \ref{sec: ACF}, we will show that $\alpha_{3,N} \le 2$, so that Theorem \ref{thm: holder bounds} provides uniform H\"older bounds for exponents smaller than $\bar \nu \le 2/3$. This is a suboptimal result. The determination of the optimal exponent will be the subject of analysis in our next work \cite{ST2024}, but we can anticipate here that we will prove to be $3/4$. We point out that, in the derivation of the optimal uniform bounds, a key point will be  Theorem \ref{thm: holder bounds} here, complemented with an \emph{improvement of regularity} argument.
\end{remark}

The proof of Theorem \ref{thm: holder bounds} is based on a contradiction argument and on a delicate blow-up analysis, and rests on two pillars, which we believe are findings of independent interest and liable to further applications.
The first is a proper multiple-phases Alt-Caffarelli-Friedman type monotonicity formula (Theorem \ref{thm: ACF}) tailored for the case of partially segregated densities, and its perturbed version. The second backbone is a Liouville type theorem for entire solutions to \eqref{P beta} clarifying the existence of minimal growth of its non-trivial solutions. Here is a statement, immediately following from Theorem \ref{thm: liou sub}:
\begin{corollary}[Liouville type Theorem] 
Let $k \ge 3$ and $N \ge 2$ be positive integers, and, for $i=1,\dots,k$, let $u_i \in H^1_{\loc}(\R^N) \cap C(\R^N)$ be nonnegative functions satisfying the following system of PDEs
\[
\Delta u_i = M u_i \prod_{\substack{j=1 \\ j \neq i}}^k u_j^2, \quad u_i>0 \quad \;{\rm in}\; \R^N,
\]
for some $M>0$. There exists $\bar\nu\in(0,1)$, depending only on the dimension $N$ and the number of components $k$, such that, if 
\[
0\le u_i(x) \le C(1+|x|^{\alpha}) \quad \text{for every $x \in \R^N$},
\]
with $\alpha <\bar\nu$, then at least one component $u_i$ vanishes identically and all others are constant.
\end{corollary}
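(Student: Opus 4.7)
The plan is to deduce the corollary from Theorem~\ref{thm: liou sub} by a growth-versus-monotonicity argument, using the partial-segregation Alt--Caffarelli--Friedman (ACF) monotonicity formula alluded to in the introduction (Theorem~\ref{thm: ACF}). First I would argue by contradiction: suppose that every $u_i$ is nontrivial. Since $\Delta u_i = M u_i\prod_{j\neq i} u_j^2 \geq 0$, each $u_i$ is subharmonic on $\R^N$, and the ternary product structure of the right-hand side is what will allow us to invoke an ACF-type formula adapted to partial, rather than full, segregation.

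Next I would introduce, for each $i$, the weighted energy
\[
J_i(r) := \int_{B_r} \frac{|\nabla u_i|^2 + M\, u_i^2 \prod_{j\neq i} u_j^2}{|x|^{N-2}}\,dx,
\]
and the multi-phase ACF functional
\[
\Phi(r) := \prod_{i=1}^k r^{-2\gamma}\, J_i(r), \qquad \gamma := \frac{\alpha_{k,N}}{k},
\]
where $\alpha_{k,N}$ is the optimal exponent of the overlapping partition problem on $\S^{N-1}$ referenced in the remark after Theorem~\ref{thm: min fixed traces}. The perturbed ACF monotonicity formula adapted to $k$-ary interactions gives that $\Phi$ is essentially nondecreasing at large scales. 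Since no $u_i$ vanishes identically, $\Phi(r_0)>0$ for some $r_0$, hence $\Phi(r) \geq c_0 > 0$ for all $r \geq r_0$.

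On the other hand, the growth assumption $u_i(x)\le C(1+|x|^\alpha)$, combined with standard Caccioppoli-type estimates for subharmonic functions and with the identity $\int_{B_r} u_i\,\Delta u_i = \int_{B_r}|\nabla u_i|^2 - \int_{\partial B_r} u_i\,\partial_\nu u_i$ (which controls the interaction term $M\int u_i^2\prod_{j\neq i} u_j^2$ on $B_r$ by the Dirichlet term on a slightly larger ball), yields $J_i(r) \leq C\,r^{2\alpha}$. Consequently
\[
\Phi(r) \leq C\, r^{2k(\alpha-\gamma)} = C\, r^{2(k\alpha-\alpha_{k,N})} \longrightarrow 0 \quad \text{as } r\to+\infty,
\]
because the hypothesis $\alpha < \bar\nu := \alpha_{k,N}/k$ gives $k\alpha < \alpha_{k,N}$. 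This contradicts the lower bound $\Phi(r)\geq c_0>0$. Therefore some component $u_{i_0}$ must vanish identically, after which the remaining equations collapse to $\Delta u_j = 0$ for $j\ne i_0$; having growth strictly less than $|x|$, each $u_j$ must be constant by the classical Liouville theorem.

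The main obstacle is the \emph{perturbed} multi-phase ACF monotonicity formula for partial (not full) segregation: the classical ACF inequality exploits pointwise pairwise disjointness of the supports, whereas here only triples vanish simultaneously in the limit, and for finite $M$ no pair of $u_i, u_j$ has disjoint support at all. The correct eigenvalue/frequency computation on $\S^{N-1}$ is thus governed by the overlapping partition constant $\alpha_{k,N}$, and controlling the cross-terms arising from the pairwise overlaps $u_iu_j$ in the differential inequality for $\Phi$ is the delicate heart of the argument. Once that formula is in place (as secured by Theorem~\ref{thm: ACF}), the growth-versus-monotonicity dichotomy above closes the proof by a clean dimensional count.
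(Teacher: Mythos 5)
Your argument is correct and is, in substance, the paper's own route: the corollary is declared to follow immediately from Theorem~\ref{thm: liou sub}, whose proof is exactly the growth-versus-monotonicity dichotomy you describe, and you correctly add the remaining observation that once some $u_{i_0}\equiv 0$, the equations for $j\neq i_0$ collapse to $\Delta u_j=0$, so sublinear growth forces each remaining component to be constant. Two small corrections of presentation: the monotonicity input you need is the \emph{perturbed} formula for solutions of the system (Theorem~\ref{thm: acf per}), not Theorem~\ref{thm: ACF}, which requires the exact segregation constraint $\int u_1^2\cdots u_k^2=0$; and for finite $M$ the functional with exponent exactly $\gamma=\alpha_{k,N}/k$ is not monotone --- only the $\varepsilon$-deficit version $r^{-2(\alpha_{k,N}-\varepsilon)}\prod_i J_i(r)$ is, for $r$ beyond some $\bar r(\varepsilon)$. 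This is harmless: fix $0<\varepsilon<\alpha_{k,N}-k\alpha$, deduce $\prod_i J_i(r)\geq c_0\,r^{2(\alpha_{k,N}-\varepsilon)}$ for $r$ large, compare with the upper bound $\prod_i J_i(r)\leq C\,r^{2k\alpha}$ coming from the growth hypothesis and the $\Phi_\delta$-weighted Caccioppoli estimate (your unweighted integration-by-parts identity, as written, has the wrong sign and omits the kernel $|x|^{2-N}$; the paper tests with $\eta^2\Phi_\delta u_i$), and the contradiction follows since $2k\alpha<2(\alpha_{k,N}-\varepsilon)$.
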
 

In other words, three phases can coexist, in a ternary interaction regime, only featuring a spatial growth greater than or equal to $|x|^{\bar\nu}$. We will prove in  the forthcoming \cite{ST2024} that the optimal exponent is exactly $\bar\nu=3/4$, hence smaller than one, while we know that in the regime of two-by-two interactions the minimal growth exponent is $3/2$ (cf. \cite{ST15}).

\medskip

Once that Theorem \ref{thm: holder bounds} is established, it is not difficult to derive the basic properties of the limiting profiles. It is convenient to define at first a class of triplets $\mf{v}:\Omega \to \R^3$ satisfying the partial segregation condition. 
 
\begin{definition}\label{def: L}
Let $\Omega \subset \R^N$ be a domain. We denote by $\mathcal{L}(\Omega)$ the set of nonnegative functions $\mf{v} \in H^1(\Omega,\R^3) \cap C(\Omega,\R^3)$, $\mf{v} \not \equiv 0$, satisfying the following conditions: there exist sequences $M_n \to +\infty$ and $\{\mf{v}_n\} \subset H^1(\Omega,\R^3)$, with $v_{i,n}>0$ in $\Omega$, such that
\begin{itemize}
\item[($i$)] $\mf{v}_n$ is a minimizer of $J_{M_n}$ with respect to variations with fixed trace in $\Omega$, namely
\[
J_{M_n}(\mf{v}_n, \Omega) \le J_{M_n}(\mf{v}_n + \bs{\varphi}, \Omega) \qquad \forall \bs{\varphi} \in H_0^1(\Omega,\R^3).
\]
\item[($ii$)] $\mf{v}_n \to \mf{v}$ strongly in $H^1(\Omega)$, and in $C^{0}(\overline\Omega)$.
\item[($iii$)] As $n \to \infty$
\[
\int_\Omega M_n \prod_{j=1}^3 v_{j,n}^2\,dx \to 0.
\]
\end{itemize}
We denote by $\mathcal{L}_{\loc}(\Omega)$ the set of functions $\mf{v}$ such that $\mf{v} \in \mathcal{L}(\omega)$ for every $\omega \ssubset \Omega$.
\end{definition}

\begin{remark}\label{rem: limit class}
The second part of Theorem \ref{thm: holder bounds} can be now summarize by saying that, up to a subsequence $\mf{u}_\beta \to \tilde{\mf{u}} \in \mathcal{L}_{\loc}(\Omega) \cup \{0\}$ as $\beta \to +\infty$.
\end{remark}

\begin{theorem}[H\"older continuity and Local Poho\v{z}aev identity for limiting profiles]\label{thm: bas prop}
Let $\tilde{\mf{u}} \in \mathcal{L}_{\loc}(\Omega)$. Then $\tilde{\mf{u}} \in C^{0,\alpha}(\Omega) \cap H^1_{\loc}(\Omega)$, for every $\alpha \in (0, \bar \nu)$,
\beq\label{cond base 1}
\Delta \tilde u_i=0 \quad \text{in $\{u_i>0\}$} \quad \text{and} \quad \tilde u_1\,\tilde u_2\,\tilde u_3 \equiv 0 \quad \text{in $\Omega$},
\eeq
and the following \emph{domain variation formula} (or \emph{local Pohozaev identity}) holds:
\beq\label{loc Poh}
\int_{S_r(x_0)} \sum_i |\nabla \tilde u_i|^2\,d\sigma = \frac{N-2}{r} \int_{B_r(x_0)} \sum_i |\nabla \tilde u_i|^2\,dx + 2 \int_{S_r(x_0)} \sum_i (\pa_\nu \tilde u_i)^2\,d\sigma,
\eeq
whenever $B_r(x_0) \subset \Omega$.
\end{theorem}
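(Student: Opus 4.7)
The plan is to derive every assertion directly from the defining property of $\mathcal{L}_{\loc}(\Omega)$ (Definition~\ref{def: L}), working with the approximating sequence $\mf{v}_n$ of minimizers of $J_{M_n}$, $M_n \to +\infty$, converging to $\tilde{\mf{u}}$ strongly in $H^1(\omega)$ and in $C^0(\overline{\omega})$ on each $\omega \ssubset \Omega$, with $M_n \int_\omega \prod_j v_{j,n}^2\,dx \to 0$.

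For the H\"older regularity, I would invoke Theorem~\ref{thm: holder bounds} applied to $\mf{v}_n$: the minimality condition (h2) is built in, and the uniform $L^\infty$ bound (h1) holds on every $\omega \ssubset \Omega$ thanks to the $C^0$ convergence. Hence $\|\mf{v}_n\|_{C^{0,\alpha}(K)} \le C$ for each $K \ssubset \Omega$ and each $\alpha < \bar\nu$, and this estimate passes to $\tilde{\mf{u}}$. The partial segregation follows at once from $M_n \to +\infty$ together with $M_n \int_\omega \prod_j v_{j,n}^2 \to 0$: these force $\int_\omega \prod_j \tilde u_j^2 = 0$ on every $\omega \ssubset \Omega$ by uniform convergence, and continuity yields $\tilde u_1\,\tilde u_2\,\tilde u_3 \equiv 0$.

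To show $\Delta \tilde u_i = 0$ in $\{\tilde u_i > 0\}$, I would fix $x_0$ with $\tilde u_i(x_0) > 0$ and a ball $U \ssubset \Omega$ on which $\tilde u_i \ge \delta > 0$. Uniform convergence gives $v_{i,n} \ge \delta/2$ on $U$ for $n$ large, so on $U$
\[
M_n v_{i,n} \prod_{j \neq i} v_{j,n}^2 \le \frac{2}{\delta}\, M_n \prod_{j=1}^3 v_{j,n}^2,
\]
whose $L^1(U)$ norm tends to zero. Testing the equation $\Delta v_{i,n} = M_n v_{i,n} \prod_{j\neq i} v_{j,n}^2$ against an arbitrary $\varphi \in C_c^\infty(U)$ and passing to the limit, using strong $H^1$ convergence on the gradient side and the estimate above on the reaction side, produces $\int \nabla \tilde u_i \cdot \nabla \varphi\,dx = 0$, hence $\Delta \tilde u_i = 0$ in the distributional sense on $U$.

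The local Pohozaev identity is the most delicate point. At the approximating level, multiplying the equation for $v_{i,n}$ by $(x-x_0)\cdot \nabla v_{i,n}$, summing in $i$, and integrating by parts on $B_r(x_0)$ yields the standard Pohozaev equality \eqref{loc Poh} for $\mf{v}_n$ plus a reaction remainder
\[
R_n(r) = \frac{M_n N}{r}\int_{B_r(x_0)} \prod_{j=1}^3 v_{j,n}^2\,dx - M_n \int_{S_r(x_0)}\prod_{j=1}^3 v_{j,n}^2\,d\sigma,
\]
where I use the symmetric identity $\sum_i v_i\,\pa_k v_i \prod_{j\neq i} v_j^2 = \tfrac{1}{2}\pa_k \prod_j v_j^2$ to rewrite the right-hand side of the summed equation as a divergence. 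Condition ($iii$) of Definition~\ref{def: L} controls the volume piece of $R_n(r)$, while the coarea/Fubini slicing $\int_0^R M_n \int_{S_r}\prod v_{j,n}^2\,d\sigma\,dr = M_n\int_{B_R}\prod v_{j,n}^2\,dx \to 0$ forces the spherical piece to vanish in $L^1_r$, and thus pointwise a.e. in $r$ along a subsequence. Combined with the a.e.-$r$ convergence of $\int_{S_r}|\nabla v_{i,n}|^2\,d\sigma$ and $\int_{S_r}(\pa_\nu v_{i,n})^2\,d\sigma$ extracted from strong $H^1_{\loc}$ convergence via Fubini and a diagonal argument, this delivers \eqref{loc Poh} for a.e. admissible $r$, which is the natural sense given that $\tilde{\mf{u}}$ is only $H^1_{\loc}$. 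The main obstacle is precisely the control of the spherical trace of the interaction term, which the coarea slicing resolves.
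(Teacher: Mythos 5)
Your proof is correct. For the first two assertions (H\"older regularity, partial segregation, and harmonicity on the positivity set) your argument is essentially the paper's: invoke Theorem~\ref{thm: holder bounds} for the uniform bound, pass the vanishing of $M_n\int_\omega \prod v_{j,n}^2$ to the limit to obtain segregation, and deduce $\Delta\tilde u_i = 0$ on $\{\tilde u_i>0\}$ from the comparison $M_n v_{i,n}\prod_{j\neq i}v_{j,n}^2 \le (2/\delta)\,M_n\prod_j v_{j,n}^2$ on a ball where $\tilde u_i\ge\delta$. For the Pohozaev identity you take a genuinely different route. The paper first integrates the approximate identity of Lemma~\ref{lem: poh} in $r$ over an annulus $A_{r_0,r}(x_0)$, which turns every spherical integral into a volume integral; the limit passage is then an easy consequence of strong $H^1$ convergence and condition~($iii$), with no spherical traces to control, and the pointwise-in-$r$ identity is recovered afterward by the fundamental theorem of calculus. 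You instead pass to the limit directly in the pointwise identity, isolate the reaction remainder $R_n(r)$, and kill its spherical piece (together with the spherical traces of the gradient terms) by coarea slicing, obtaining a.e.-$r$ convergence along a subsequence. Both routes yield \eqref{loc Poh} for a.e. admissible $r$ --- the natural sense, since $\tilde{\mf{u}}$ is only $H^1_{\loc}$; the paper's detour through the integrated identity spares the subsequence extraction and the diagonal argument in $R$ that your direct limit requires, while your route makes the a.e.-$r$ character of the conclusion more explicit. The algebraic identity $\sum_i v_i\,\pa_k v_i\prod_{j\neq i}v_j^2 = \frac12\,\pa_k\prod_j v_j^2$ you use to write the reaction term as a divergence is precisely the one implicit in the proof of Lemma~\ref{lem: poh}.
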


Theorem \ref{thm: bas prop} collects the \emph{main extremality conditions} satisfied by the limiting profiles of system \eqref{PSP}. They will be used in \cite{ST2024} in order to study the free-boundary and the optimal regularity.

\begin{remark} 
One may wonder whether Theorem \ref{thm: bas prop} - and hence also the subsequent analysis in \cite{ST2024} - holds for any minimizer for $c_\infty$ (defined in \eqref{def c infty}), or not. Notice that it is not clear that any such minimizer can be approximated by solutions to \eqref{PSP}, and hence may not stay in $\mathcal{L}_{\loc}(\Omega)$. Using a simple and standard penalization argument, one can easily prove that \emph{all energy minimizers} among configurations with partial segregation as in \eqref{def c infty} enjoy the same properties as stated in the theorem. The penalization procedure does not present substantial difficulties, but it does cause an additional term to appear in equation \eqref{PSP}, leading to some purely technical, sometimes cumbersome, complications. For this reason, we have decided to omit the details. The interested reader can refer to \cite{ST19} for a similar situation, where all steps are made explicit.
\end{remark}

The present contribution and the forthcoming \cite{ST2024} are among the first attempts to study partial segregation phenomena in a systematic way. As far as we know, partial segregation models was previously studied only in \cite{BoBuFo, CaRo}. In \cite{CaRo}, the authors focused on uniform-in-$\beta$ H\"older bounds (for some small exponent $\alpha \in (0,1)$) for uniformly bounded solutions of rather general systems of type
\[
L_i u_i = - \beta A_i(x) F(u_1,\dots,u_k), \quad F(u_1,\dots,u_k)= \prod_{j=1}^k u_j^{\alpha_j},
\]
where $L_i$ is an elliptic operator with possibly variable coefficient, and $A_i$ are smooth and positive. The case 
\beq\label{symPS}
\Delta u_i = \beta \prod_{j=1}^k u_j
\eeq
is included in their analysis. Notice that the function $F(u_1,\dots,u_k)$ appearing on the right hand side is the same for all the components;
this is a key feature in their analysis (and marks a significant difference with our case). In \cite{BoBuFo}, the authors conducted a more thorough analysis of the properties of the limiting profiles for system \eqref{symPS}. Contrary to our case, such profiles appear to be Lipschitz continuous, while we show (cf. \cite{ST2024}, Part 2) the maximal exponent of H\"older continuity to be strictly less than one, thus preventing boundedness of the gradients.



\subsection{Two-by-two interactions} To give an idea of the scope of our results and the new difficulties to be overcome, let us now make some comparisons with the case, already extensively dealt with in the literature, of binary interactions and the resulting total segregation limit configurations. This serves also as motivation for our study. When condition \eqref{PSC} is replaced by the \emph{(full) segregation condition}
\[
\phi_i\,\phi_j \equiv 0 \quad \text{in $\Omega$ for every $i \neq j$}
\]
(namely the positivity sets of $\phi_i$ and $\phi_j$ are disjoint), problem
\beq\label{FSP}
\begin{cases} 
\displaystyle \Delta u_i = \beta u_i \sum_{j \neq i} u_j^2, \quad u_i>0  & \text{in }\Omega\\
u_i = \phi_i & \text{on }\partial \Omega.
\end{cases}
\eeq
was studied in \cite{CL08, NTTV10, Wa}, see also \cite{CafLin06, CTV03, CTVind, DWZjfa, STTZ16, TT12}. Also in this case one can find a solution $\mf{v}_\beta$ by minimizing the associated functional, and prove that a family of (positive) minimizers weakly converges in $H^1$ to a fully segregated nonnegative function $\bar{\mf{u}}$, namely $\bar u_i \, \bar u_j \equiv 0$ a.e. in $\Omega$, for every $i \neq j$. At this point, one can consider the same questions ($i$)-($iii$) raised before. The picture is now well understood, and optimal results are available. It is not difficult to show that $\bar{\mf{u}}$ is a \emph{harmonic map into a singular space}, in the sense that it is a minimizer for
\[
\min\left\{ \int_{\Omega} \sum_{i=1}^3 |\nabla u_i|^2\,dx \left| \begin{array}{l} \mf{u} \in H^1(\Omega,\R^3), \ u_i = \phi_i \quad \text{ on $\pa \Omega$} \\ u_i\,u_j \equiv 0 \quad \text{a.e. in $\Omega$, for every $i \neq j$}\end{array}\right.\right\}.
\]
Notice that the target space is 
\[
\left\{x \in \R^N: \ x_i \,x_j = 0 \quad \text{for every $i \neq j$, and $x_i>0$ for every $i$}\right\},
\]
which is a singular space with non-positive curvature in the sense of Alexandrov, see \cite{GrSc}. Here we see another fundamental difference with the case under investigation in this paper; in our case the target, not being geodesically convex, cannot have non-positive curvature. This aspect will be further explored in the forthcoming \cite{ST2024}. Regarding uniform a priori bounds, it is proved that $\{\mf{u}_\beta: \beta>1\}$ is bounded in $C^{0,\alpha}_{\loc}(\Omega)$, for every $\alpha \in (0,1)$. Despite the fact that each function $\mf{v}_\beta$ is smooth, this is almost optimal, since the optimal regularity for the limit $\bar{\mf{u}}$ turns out to be the Lipschitz continuity. The uniform boundedness in $C^{0,\alpha}_{\loc}$ entails a number of consequences: it is clear that, up to a subsequence, $\mf{v}_\beta \to \bar{\mf{u}}$ in $C^{0,\alpha}_{\loc}(\Omega)$, for every $\alpha$. Moreover, it also allows to prove the strong $H^1_{\loc}$ convergence, and to derive the basic properties of the limit $\bar{\mf{u}}$: it satisfies
\beq\label{prop lim full}
\Delta \bar u_i = 0 \quad \text{in }\{u_i>0\}, \quad \text{and} \quad u_i \, u_j \equiv 0 \quad \text{in $\Omega$, for every $i \neq j$},
\eeq
and, in view of the variational structure of problem \eqref{FSP}, the \emph{domain variation formula} (or \emph{local Pohozaev identity}) \eqref{loc Poh} holds
whenever $B_r(x_0) \subset \Omega$. Notice that the domain variation formula is the same as the one arising in the partial segregation model, but the fact that it is coupled with the condition of total or partial segregation results in significant differences in the way it can be exploited. In both cases, it is worth to remark that this identity keeps track of the interaction between the different components in the singular limit, an interaction that is lost at the pointwise level (see \eqref{cond base 1} or \eqref{prop lim full}), and it has been the key ingredient to study the free boundary regularity in full segregation models: the free boundary $\Gamma:= \bigcup \pa\{\bar u_i>0\}$ and the \emph{nodal set} $\{\bar{\mf{u}}=0\}$ (set of points where all the components vanish) coincide; $\Gamma$ has Hausdorff dimension $N-1$ and, more precisely, it splits into a regular part, which is a collection of $C^{1,\alpha}$-hypersurfaces, and a singular set of dimension at most $N-2$ (optimal); the regular part $\mathcal{R}$ is characterized by the fact that for any point $x_0 \in \mathcal{R}$ there exists a radius $\rho>0$ such that exactly two components, say $\bar u_1$ and $\bar u_2$, do not vanish identically in $B_\rho(x_0)$, and $\bar u_1-\bar u_2$ is harmonic in $B_\rho(x_0)$. Finally, in dimension $N=2$ the singular set is discrete, and the regular part consists in a locally finite collection of curves meeting with equal angles at singular points.

We also refer to \cite{Alp, SZ15, SZ16} for further results such as uniform \emph{Lipschitz} bounds, sharp pointwise decay estimates for solutions to \eqref{FSP}, and fine properties of the singular set in higher dimension.
 
The aforementioned  results admit an extension into more general contexts: they also holds for non-minimal, sign changing, solutions of systems with reaction terms and \emph{competing groups} of components. We refer the interested reader to \cite{STTZ16} for more details. Moreover, the same issues were considered for system \eqref{FSP} with the Laplacian replaced by the fractional Laplacian \cite{TVZ16, TVZ14, ToZi20}, for systems with \emph{long range interaction} \cite{CPQ, STTZ18, STZ23}, for asymmetric systems with different operators for each component \cite{ST23}, and for systems with non-variational interaction \cite{CTV05, CKL, TT12}, such as
\[
\begin{cases} 
\displaystyle \Delta u_i = \beta u_i \sum_{j \neq i} u_j, \quad u_i>0 & \text{in }\Omega\\
u_i = \phi_i & \text{on }\partial \Omega.
\end{cases}
\]
In general, the results obtained depend on the particular problem considered, but a common feature is that all these models describe phenomena of full segregation, namely, at each point at most one component of the limiting profile can be non-zero.

\subsection*{Structure of the paper} In Section \ref{sec: ACF}, we prove an Alt-Caffarelli-Friedman monotonicity formula tailored for the case of partially segregated densities, and its perturbed version. These monotonicity formulas are used in Section \ref{sec: liou} to prove some Liouville-type theorems. The core of the proof of Theorem \ref{thm: holder bounds} is the content of Section \ref{sec: hold bounds}. Sections \ref{sec: corol fixed} and \ref{sec: prop} are devoted to the proof of Theorem \ref{thm: min fixed traces} and Theorem \ref{thm: bas prop}, respectively.

\subsection*{Basic notation} We conclude this introduction with some basic notation which will be used throughout the rest of the paper.
\begin{itemize}
\item We denote by $B_r(x_0)$ the ball of center $x_0$ and radius $r$, and by $S_r(x_0)$ its boundary.
\item For $u \in H^1(\S^{N-1})$, we consider the optimal value
\[
\lambda(u) := \inf\left\{ \frac{\int_{\S^{N-1}}  |\nabla \varphi|^2 \,d\sigma   }{\int_{\S^{N-1}} \varphi^2 \,d\sigma} 
\left| \begin{array}{l} \varphi \in H^1(\S^{N-1} \setminus \{0\}) \ \text{and}\\ \mathcal{H}^{N-1}(\{\varphi \neq 0\} \cap \{u=0\}) = 0.  \end{array}\right.
\right\}.
\]
where $d\sigma = d\sigma_x$ and $\mathcal{H}^{N-1}$ stands for the usual $(N-1)$-dimensional Hausdorff measure. Notice that, if $u$ is also continuous, then $\lambda(u)$ is the first eigenvalue of the Laplace-Beltrami operator with homogeneous Dirichlet boundary condition on the open set $\{\xi \in \S^{N-1}: u(\xi) >0\}$.
\item For $u \in H^1(S_r(x_0))$, we set $u_{x_0,r}(\xi) = u(x_0+r \xi) \in   H^1(S_1) \simeq H^1(\S^{N-1})$. 
\item We define $\gamma: [0,+\infty) \to [0,+\infty)$
\[
\gamma(t) := \sqrt{\left(\frac{N-2}{2}\right)^2 + t\,} - \frac{N-2}{2}.
\]
\item For $N \ge 3$ and $\delta>0$, we define $\phi_\delta: [0,+\infty) \to (0,+\infty)$ and $\Phi_\delta: \R^N \to (0,+\infty)$ by
\begin{equation}\label{reg_fun}
\phi_\delta(r) =  \begin{cases}
\frac{N}2 \delta^{2-N} + \frac{2-N}2\delta^{-N} r^2 & \text{if }0 \le r \le \delta \\
r^{2-N} & \text{if $r>\delta$},
\end{cases} \qquad \Phi_\delta(x) = \phi_\delta(|x|).
\end{equation}
$\Phi_\delta$ is a $C^1$ positive superharmonic function in $\R^N$. 
\item We often use the vector notation $\mf{u}:=(u_1,\dots,u_k)$ for functions $\R^N \to \R^k$. Moreover, we denote by $\hat{\mf{u}}_i: \R^N \to \R^{k-1}$ the vector obtained from $\mf{u}$ by erasing the $i$-th component.
\item We say that a function $v$ is \emph{globally $\alpha$-H\"older continuous in an open set $\Omega$}, for some $\alpha \in (0,1)$, if the H\"older seminorm 
\[
[v]_{C^{0,\alpha}(\Omega)}:= \sup_{\substack{x \neq y \\ x, y \in \Omega}} \frac{|v(x)-v(y)|}{|x-y|^\alpha}
\]
is finite. No limitation on the $L^\infty$ norm of $v$ is required.
\end{itemize}

\section{An Alt-Caffarelli-Friedman monotonicity formula for partially segregated phases}\label{sec: ACF}

The purpose of this section is to establish an ACF-type monotonicity formula for vector valued non-negative subharmonic functions satisfying the partial segregation condition, and some variants.

For $N \ge 2$, we define
\[
I(u,x_0,r) := \int_{B_r(x_0)} \frac{|\nabla u|^2}{|x-x_0|^{N-2}}\,dx,
\]
and
\[
J_\nu(u_1,\dots,u_k,x_0,r) := \frac{1}{r^{2\nu}} \prod_{j=1}^k I(u_j,x_0,r) 
\]

\begin{lemma}\label{lem: ACF pre}
Let $u \in H^1_{\loc}(B_R)$ be nonnegative, and such that $\Delta u \ge 0$ in $B_R$. Then, for almost every $r >0$ such that $B_r(x_0) \ssubset B_R$, we have
\[
\int_{S_r(x_0)} \frac{|\nabla u|^2}{|x-x_0|^{N-2}}\,d\sigma \ge \frac{2 \gamma( \lambda(u_{x_0,r}))}{r}  \int_{B_r(x_0)} \frac{|\nabla u|^2}{|x-x_0|^{N-2}}\,dx.
\]
where $u_{x_0,r}=u(x_0+r\xi)$.
\end{lemma}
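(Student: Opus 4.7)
The plan is to fix $x_0=0$ without loss of generality and to prove the logarithmic-derivative-type inequality $r\phi'(r)\ge 2\gamma(\lambda_r)\phi(r)$, where $\phi(r):=I(u,0,r)$ and $\lambda_r:=\lambda(u_{0,r})$. The natural way is to decompose everything in polar coordinates $x=\rho\xi$ with $\rho\in(0,r)$ and $\xi\in\S^{N-1}$. Writing $w_\rho(\xi):=u(\rho\xi)$ and setting
\[
A(\rho):=\int_{\S^{N-1}}(\partial_\rho w_\rho)^2\,d\sigma_\xi,\quad B(\rho):=\int_{\S^{N-1}}|\nabla_\xi w_\rho|^2\,d\sigma_\xi,\quad C(\rho):=\int_{\S^{N-1}} w_\rho^2\,d\sigma_\xi,
\]
a direct calculation with $|\nabla u|^2=(\partial_\rho u)^2+\rho^{-2}|\nabla_\tau u|^2$ yields
\[
\phi(r)=\int_0^r\Bigl(\rho A(\rho)+\rho^{-1}B(\rho)\Bigr)\,d\rho,\qquad \phi'(r)=rA(r)+r^{-1}B(r).
\]

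The heart of the argument is to bound $\phi(r)$ from above. I would test the distributional inequality $\Delta u\ge 0$ against the nonnegative function $u\cdot|x|^{2-N}$, regularized near the origin (for instance by replacing $|x|^{2-N}$ by the function $\Phi_\delta$ introduced in \eqref{reg_fun} and letting $\delta\to 0^+$, or by restricting to $B_r\setminus B_\epsilon$ and sending $\epsilon\to 0^+$). Using $\Delta(|x|^{2-N})=0$ off the origin and integrating by parts produces
\[
\phi(r)\le r^{2-N}\!\int_{S_r}u\,\partial_\nu u\,d\sigma+(N-2)\!\int_{B_r}u\,\partial_r u\,|x|^{1-N}\,dx.
\]
The last integral collapses in polar coordinates to $\tfrac{N-2}{2}[C(r)-C(0)]\le \tfrac{N-2}{2}C(r)$ (with $C(0)\ge 0$ interpreted via the usual approximation). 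The boundary term is handled by Cauchy--Schwarz on the sphere, $\int_{S_r}u\,\partial_r u\,d\sigma\le r^{N-1}\sqrt{A(r)C(r)}$, and by the definition of $\lambda_r$, which yields the Poincaré-type bound $C(r)\le B(r)/\lambda_r$. Combining the pieces gives the clean estimate
\[
\phi(r)\le r\sqrt{A(r)B(r)/\lambda_r}+\tfrac{N-2}{2\lambda_r}\,B(r).
\]

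The conclusion is then purely algebraic. Using the defining identity $\gamma(\lambda_r)(\gamma(\lambda_r)+N-2)=\lambda_r$, i.e.\ $\tfrac{(N-2)\gamma(\lambda_r)}{\lambda_r}=1-\gamma(\lambda_r)^2/\lambda_r$, the target inequality $r^2A(r)+B(r)\ge 2\gamma(\lambda_r)\phi(r)$ is equivalent after simplification to
\[
\bigl(r\sqrt{A(r)}\bigr)^2+\bigl(\gamma(\lambda_r)\sqrt{B(r)/\lambda_r}\bigr)^2\ge 2\bigl(r\sqrt{A(r)}\bigr)\bigl(\gamma(\lambda_r)\sqrt{B(r)/\lambda_r}\bigr),
\]
which is just AM--GM. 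Note that for $N=2$ the weight is identically $1$, $\gamma(t)=\sqrt{t}$, and the extra radial term disappears, so the same scheme works verbatim (or even more simply, testing against $u$ alone).

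The main obstacle I expect is not the algebra but the \emph{justification of the integration by parts} at the singular point $x_0$: one must check that testing $\Delta u\ge 0$ against $u\,|x|^{2-N}$ is legitimate for a merely $H^1_{\mathrm{loc}}$ subharmonic $u$, and that the origin contributes nonnegatively in the limit. This is exactly where I would invoke the superharmonic $C^1$ regularization $\Phi_\delta$ from \eqref{reg_fun}: it allows one to approximate the singular weight by a globally admissible test function, pass every step above through, and then let $\delta\to 0^+$ using monotone convergence on the weighted integrals. The restriction to almost every $r$ appears naturally because $\phi'(r)$ is defined only for a.e.\ $r$ via the coarea formula.
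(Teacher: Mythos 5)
Your proposal is correct and reproduces the standard Alt--Caffarelli--Friedman argument that the paper defers to its references (PSU12, Velichkov, ST23): polar decomposition of $\phi(r)=I(u,x_0,r)$ with the three spherical quantities $A,B,C$, the key subharmonicity bound $\phi(r)\le r\sqrt{A(r)C(r)}+\frac{N-2}{2}C(r)$ via the regularized weight $\Phi_\delta$, the Poincar\'e bound $\lambda_r C(r)\le B(r)$ obtained by using $u_{x_0,r}$ itself as competitor in the definition of $\lambda(\cdot)$, and the final AM--GM rearrangement through the identity $\gamma(\gamma+N-2)=\lambda$. All the algebra checks out, and your flag on the integration by parts across the singularity is exactly where the real work lies.

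Two small remarks that you should make explicit if you were to write this out. First, your derivation divides by $\lambda_r$, so it tacitly assumes $\lambda_r>0$; the case $\lambda_r=0$ (which occurs when $u>0$ a.e. on $S_r(x_0)$, so that constants are admissible competitors) must be treated separately, but is trivial since then $\gamma(\lambda_r)=0$ and the claimed inequality is just nonnegativity of the left side --- this is precisely Remark~\ref{rmk acf pre} in the paper. Second, your phrase ``$C(0)\ge 0$ interpreted via the usual approximation'' is the right idea but not quite a number: with the regularization $\Phi_\delta$ the radial term becomes $\frac{N-2}{2}\bigl(C(r)-N\delta^{-N}\!\int_0^\delta C(\rho)\rho^{N-1}d\rho\bigr)$, and it is the nonnegative average (not a pointwise value of $C$) that is dropped. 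Equivalently, one can avoid computing the radial term in polar coordinates at all by integrating by parts a second time and using the superharmonicity $\Delta\Phi_\delta\le 0$: then $-\int u\nabla u\cdot\nabla\Phi_\delta=\tfrac12\int u^2\Delta\Phi_\delta-\tfrac12\int_{S_r}u^2\partial_\nu\Phi_\delta\le\tfrac{N-2}{2}C(r)$ immediately. This is the formulation used in the cited sources and makes the role of $\Phi_\delta$ cleanest.
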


The proof of the lemma is rather standard. We refer to \cite[Chapter 2]{PSU12} for a proof under an extra continuity assumption, and to \cite[Proof of Theorem 1.3]{Vel} and \cite[Lemma 2.2]{ST23} for the general case.


\begin{remark}\label{rmk acf pre}
Notice that the lemma is valid also in the case when $\lambda(u_{x_0,r}) = 0$, which takes place when $u >0$ everywhere on $S_r(x_0)$. Indeed, in such a situation the left hand side of the inequality is nonnegative, and the right-hand side is $0$, being
\[
\lambda(u_{x_0,r}) = \lambda_1(\S^{N-1}) = 0.
\]
\end{remark}

We study now the following optimization problem with partial segregation:
\begin{equation}\label{opp}
\alpha_{k,N}:=\inf\left\{ \sum_{j=1}^k \gamma(\lambda(u_j)) \left| \begin{array}{l} u_j \in H^1(\S^{N-1})  \\  
\int_{\S^{N-1}} u_1^2 \cdots u_k^2\, d\sigma = 0.
\end{array}\right.\right\},
\end{equation}
with the convention that $\lambda(u) = +\infty$ if $u \equiv 0$ on $\S^{N-1}$ (this gives some continuity to $\lambda(\cdot)$ since, if $\mathcal{H}^{N-1}(\{u_n>0\}) \to 0$, then 
$\lambda(u_n) \to +\infty$ by the Sobolev inequality). The infimum is greater than or equal to $0$, since we are minimizing the sum of non-negative quantities. We also recall that $\alpha_{2,N}=2$ (this is the content of the Friedland-Hayman inequality\footnote{The Friedland-Hayman inequality is usually stated in a slightly different form, involving partitions of the sphere in disjoint open sets. However, in light of the condition $\int_{\S^{N-1}} u_1^2 u_2^2\,d\sigma = 0$, it is not difficult to check that the inequality is equivalent to the fact that $\alpha_{2,N}=2$.}), and the optimal value is reached if and only if $u_1$ and $u_2$ are $1$-homogeneous functions supported on disjoint half-spherical caps (see \cite[Chapter 2]{PSU12} and references therein for more details). 

\begin{lemma}\label{lem: on ov}
It results that $0 <\alpha_{k,N} \le 2$. 
\end{lemma}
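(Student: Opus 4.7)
The plan is to establish the two inequalities by separate arguments of very different nature.

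For the upper bound $\alpha_{k,N} \le 2$, I would exhibit an explicit admissible competitor. Choose Cartesian coordinates $\xi = (\xi_1,\dots,\xi_N)$ on $\S^{N-1}$, and set
\[
u_1(\xi) := (\xi_N)_+, \qquad u_2(\xi) := (\xi_N)_-, \qquad u_j \equiv 1 \quad \text{for } j = 3,\dots,k.
\]
Since $u_1 u_2 \equiv 0$, the partial segregation constraint $\int_{\S^{N-1}} u_1^2 \cdots u_k^2 \, d\sigma = 0$ is trivially satisfied. The restriction of $\xi_N$ to the open upper (resp.\ lower) hemisphere is a first Dirichlet eigenfunction of $-\Delta_{\S^{N-1}}$ with eigenvalue $N-1$, so $\lambda(u_1) = \lambda(u_2) = N-1$; a direct computation yields $(N-2)^2/4 + (N-1) = N^2/4$ and thus $\gamma(N-1) = 1$. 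For the constant components the set $\{u_j = 0\}$ is empty, so any $\varphi \in H^1(\S^{N-1})$ is admissible in the Rayleigh quotient defining $\lambda(u_j)$; choosing $\varphi \equiv 1$ gives $\lambda(u_j) = 0$ and $\gamma(\lambda(u_j)) = 0$. Summing yields $\sum_{j=1}^k \gamma(\lambda(u_j)) = 2$.

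For the strict positivity $\alpha_{k,N} > 0$, I would argue by contradiction, combining a pigeonhole argument with the Faber--Krahn inequality on $\S^{N-1}$. Assume there exists a minimizing sequence $\{(u_1^n,\dots,u_k^n)\}$ with $\sum_j \gamma(\lambda(u_j^n)) \to 0$. Since $\gamma$ is strictly increasing with $\gamma(0) = 0$, we deduce $\lambda(u_j^n) \to 0$ for every $j$. The partial segregation constraint implies $\bigcup_{j=1}^k \{u_j^n = 0\} = \S^{N-1}$ up to a null set, so
\[
\sum_{j=1}^k \mathcal{H}^{N-1}(\{u_j^n = 0\}) \ge |\S^{N-1}|,
\]
and by pigeonhole, after extracting a subsequence, there is a fixed index $j_0$ with
\[
\mathcal{H}^{N-1}(\{u_{j_0}^n > 0\}) \le \bigl(1 - \tfrac{1}{k}\bigr)|\S^{N-1}| \quad \text{for all } n.
\]
Any admissible $\varphi$ for $\lambda(u_{j_0}^n)$ vanishes $\mathcal{H}^{N-1}$-a.e.\ on $\{u_{j_0}^n = 0\}$, so extending it by zero one reduces $\lambda(u_{j_0}^n)$ to a generalized first Dirichlet eigenvalue on a set of measure bounded by $(1-1/k)|\S^{N-1}|$. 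Spherical symmetrization (Faber--Krahn on $\S^{N-1}$) then gives
\[
\lambda(u_{j_0}^n) \ge \lambda_1^{\mathrm{Dir}}(\Sigma_k) =: c_{k,N} > 0,
\]
where $\Sigma_k$ denotes a spherical cap of measure $(1-1/k)|\S^{N-1}|$, whose first Dirichlet eigenvalue depends only on $k$ and $N$. This contradicts $\lambda(u_{j_0}^n) \to 0$ and completes the proof.

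The main obstacle is not the symmetrization itself, which is classical, but rather verifying that Faber--Krahn applies to the generalized eigenvalue $\lambda(\cdot)$ as defined in the paper --- whose admissible class is described by a vanishing condition $\mathcal{H}^{N-1}(\{\varphi \neq 0\} \cap \{u=0\}) = 0$ rather than by compact support in an open set. This is handled by noting that admissible $\varphi$ extend by zero to a function in $H^1(\S^{N-1})$ supported in the quasi-open set $\{u_j^n > 0\}$, so the capacitary formulation of Faber--Krahn, which is insensitive to null sets, applies without modification.
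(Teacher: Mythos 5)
Your upper bound is exactly the paper's: explicit competitor with two complementary hemispherical caps and constant remaining components, yielding $\gamma(N-1)+\gamma(N-1) = 2$.

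Your lower bound takes a genuinely different and more quantitative route. The paper argues by compactness: an $L^2$-normalized minimizing sequence with vanishing Dirichlet energy must converge a.e.\ to a nonzero constant in each component, contradicting (via Fatou) the segregation constraint passed to the limit. You instead observe that the constraint forces $\sum_j \mathcal{H}^{N-1}(\{u_j=0\}) \ge |\S^{N-1}|$, pigeonhole an index with $|\{u_{j_0}\neq 0\}| \le (1-\tfrac1k)|\S^{N-1}|$, and then apply a spherical Faber--Krahn inequality. This yields the explicit bound $\alpha_{k,N} \ge \gamma\bigl(\lambda_1^{\mathrm{Dir}}(\Sigma_k)\bigr)$ with $\Sigma_k$ a cap of measure $(1-\tfrac1k)|\S^{N-1}|$, which is strictly more information than the paper's qualitative positivity; indeed no contradiction argument is even needed once this is observed. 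Both routes are valid, and yours would fit more naturally if one later wanted to track dependence on $k$ and $N$.

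One caveat: your closing paragraph, where you justify applying Faber--Krahn, is misdirected. Vanishing $\mathcal{H}^{N-1}$-a.e.\ on $\{u=0\}$ does \emph{not} in general imply vanishing quasi-everywhere (sets of zero measure may have positive capacity), so ``the capacitary formulation is insensitive to null sets'' is not the right way to close the gap you raised. Fortunately the gap is illusory: the symmetrization acts on the admissible $\varphi$, not on the possibly irregular set $\{u=0\}$. Any $\varphi\in H^1(\S^{N-1})$ with $|\{\varphi\neq 0\}|\le (1-\tfrac1k)|\S^{N-1}|$ has, by the spherical P\'olya--Szeg\H{o} inequality, a symmetric decreasing rearrangement $\varphi^*$ supported in a cap $C$ of measure $|\{\varphi\neq 0\}|$, with equal $L^2$ norm and no larger Dirichlet energy; since $C$ is a smooth domain, $\varphi^*\in H^1_0(C)$ and the Rayleigh quotient of $\varphi$ is bounded below by $\lambda_1^{\mathrm{Dir}}(C)\ge \lambda_1^{\mathrm{Dir}}(\Sigma_k)$. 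No capacitary considerations are required. With the justification corrected in this way, the argument is sound.
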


\begin{proof}
We first prove that $\alpha_{k,N}>0$. Suppose by contradiction that $\alpha_{k,N} = 0$. Then there exists a minimizing sequence of $L^2$-normalized functions $\{(u_{1,n}, \dots, u_{k,n})\} \subset H^1(\S^{N-1}, \R^k)$ such that as $n \to \infty$
\[
\int_{\S^{N-1}} |\nabla u_{j,n}|^2\, d \sigma \to 0, \qquad \int_{\S^{N-1}} u_{j,n}^2\, d\sigma =1, \qquad \forall j=1,\dots,k
\]
and for every $n$
\[
\int_{\S^{N-1}} u_{1,n}^2 \dots u_{k,n}^2\, d \sigma = 0.
\]
We deduce that up to a subsequence $(u_{1,n},\dots,u_{k,n})$ tends to $(u_1,\dots,u_k)$ weakly in $H^1(\S^{N-1},\R^k)$, strongly in $L^2(\S^{N-1},\R^k)$, and almost everywhere on $\S^{N-1}$. Therefore
\[
\int_{\S^{N-1}} |\nabla u_{j}|^2\, d \sigma = 0, \qquad
\int_{\S^{N-1}} u_j^2 \,d\sigma= 1, \qquad \forall j=1,\dots,k,
\]
whence it follows that all the components $u_i$ are equal to the same constant $c_N$ on $\S^{N-1}$; but on the other hand, by Fatou's lemma, we also have 
\[
\int_{\S^{N-1}} u_{1}^2 \dots u_{k}^2\, d \sigma = 0,
\]
which gives a contradiction.

Let us prove now that $\alpha_{k,N} \le 2$. To this purpose, we choose $(u_1,\dots,u_k)$ as follows:
\[
u_j(x) = \begin{cases} x_1^+ & \text{if $j=1$} \\
x_1^- & \text{if $j=2$} \\ c & \text{if $j \neq 1,2$}, \end{cases}
\]
where $c>0$, as admissible competitor for $\alpha_{k,N}$. Recalling that $\lambda(c) = 0$, we obtain
\[
\alpha_{k,N} \le \sum_{j=1}^k \gamma(\lambda(u_j)) = \gamma(\lambda(x_1^+)) + \gamma(\lambda(x_1^-)) = 2\gamma(N-1) = 2,
\]
which is the desired estimate.
\end{proof}

\begin{remark}\label{rem: ex non-const}
In dimension $N=2$ (and hence also in higher dimension), one may consider the following competitor, having all non-constant components:
\[
u_1(\theta) = \begin{cases} \sin\left(\frac{k}{2(k-1)}\theta\right) & \text{if }x \in \left[0, \frac{2\pi}{k}(k-1)\right] \\
0 & \text{otherwise}, \end{cases}
\]
and 
\[
u_2(\theta) = u_1\left(\theta-\frac{2\pi}{k}\right), \quad \dots \quad, u_k(\theta)= u_1\left(\theta- (k-1)\frac{2\pi}{k}\right).
\] 
Then 
\[
\prod_{j=1}^k u_j \equiv 0, \quad \text{and} \quad \lambda(u_1) =\cdots = \lambda(u_k) = \frac{k^2}{4(k-1)^2}, 
\]
so that 
\[
\alpha_{k,2} \le \frac{k^2}{2(k-1)}.
\]
However, this number is greater than $2$, for every $k \ge 3$. 
\end{remark}

The main result of this section is the following:

\begin{theorem}[ACF-type monotonicity formula]\label{thm: ACF}
Let $B_R \subset \R^N$, and $u_1,\dots,u_k \in H^1(B_R)$ be such that
\[
\Delta u_j \ge 0 \quad \text{and} \quad u_j \ge 0 \quad \text{in $B_R$},
\]
with 
\[
\int_{B_R} u_1^2 \cdots u_k^2 \, dx = 0
\]
For $\nu := \alpha_{k,N}$, the function $r \mapsto J_{\nu}(u_1,\dots,u_k,x_0,r)$ is monotone non-decreasing. 
\end{theorem}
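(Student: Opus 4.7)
The plan is to apply the classical Alt-Caffarelli-Friedman logarithmic-differentiation trick to the partially segregated setting. Writing
$$
\log J_\nu(u_1,\dots,u_k,x_0,r) = -2\nu \log r + \sum_{j=1}^{k} \log I(u_j,x_0,r),
$$
the monotonicity will follow once I show that the distributional derivative of the right-hand side is $\geq 0$ on $(0,R-|x_0|)$. I would first note that polar coordinates give
$$
I(u_j, x_0, r) = \int_0^r \int_{S_\rho(x_0)} \frac{|\nabla u_j|^2}{|x-x_0|^{N-2}}\,d\sigma\,d\rho,
$$
so $r\mapsto I(u_j,x_0,r)$ is locally absolutely continuous and its a.e. derivative coincides with the surface integral appearing in Lemma \ref{lem: ACF pre}. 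Since each $u_j$ is nonnegative and subharmonic, that lemma gives, at a.e. $r$ where $I(u_j,x_0,r)>0$,
$$
\frac{d}{dr}\log I(u_j,x_0,r) \ \ge\ \frac{2\,\gamma(\lambda(u_{j,x_0,r}))}{r}.
$$

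Next I would exploit the partial segregation hypothesis. Because $u_1^2\cdots u_k^2 \ge 0$ and has zero integral over $B_R$, Fubini in polar coordinates yields $\int_{S_r(x_0)} u_1^2\cdots u_k^2\,d\sigma = 0$ for a.e. $r$, i.e.\ $\int_{\mathbb{S}^{N-1}} u_{1,x_0,r}^2\cdots u_{k,x_0,r}^2\,d\sigma=0$. By the standard slicing/trace theorem, $u_{j,x_0,r}\in H^1(\mathbb{S}^{N-1})$ for a.e.\ $r$, so $(u_{1,x_0,r},\dots,u_{k,x_0,r})$ is admissible in the optimization problem \eqref{opp}, and
$$
\sum_{j=1}^{k}\gamma\!\left(\lambda(u_{j,x_0,r})\right) \ \ge\ \alpha_{k,N}\ =\ \nu.
$$
Summing the previous estimate over $j$ and combining with this bound gives, for a.e. $r$ in a subinterval where all $I(u_j,x_0,\cdot)>0$,
$$
\frac{d}{dr}\log J_\nu(u_1,\dots,u_k,x_0,r) \ \ge\ -\frac{2\nu}{r} + \frac{2}{r}\sum_{j=1}^k \gamma(\lambda(u_{j,x_0,r})) \ \ge\ 0,
$$
which yields the monotonicity on such an interval.

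It then remains to glue the pieces and handle the degenerate radii. If $I(u_j,x_0,r_0)=0$ for some $j$, then $\nabla u_j \equiv 0$ on $B_{r_0}(x_0)$, so $u_j$ is constant there and $J_\nu(\cdot)\equiv 0$ on $(0,r_0]$, which is trivially compatible with monotonicity; one restarts the estimate on the complementary (open) set where the product $\prod_j I(u_j,x_0,\cdot)$ is positive. The main delicate point is precisely this bookkeeping: making sure the a.e.\ bound on the log-derivative propagates to monotonicity of $J_\nu$ itself despite the singularity of the weight $|x-x_0|^{2-N}$ at the centre, the possible vanishing of one factor on a subinterval, and the case $\lambda(u_{j,x_0,r})=0$ (covered already by Lemma \ref{lem: ACF pre} and Remark \ref{rmk acf pre}). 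All other ingredients -- the lower semicontinuity and positivity $\alpha_{k,N}>0$ used to make the admissibility meaningful -- are already granted by Lemma \ref{lem: on ov}.
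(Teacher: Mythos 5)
Your proposal is correct and follows essentially the same route as the paper: take the logarithmic derivative of $J_\nu$, apply Lemma~\ref{lem: ACF pre} to each factor, pass the segregation condition to almost every sphere by Fubini in polar coordinates, and invoke the definition of $\alpha_{k,N}$ to bound the sum $\sum_j\gamma(\lambda(u_{j,x_0,r}))$ from below by $\nu$. The only difference is cosmetic: you spell out the gluing across radii where some $I(u_j,x_0,\cdot)$ vanishes (using that each $I$ is nondecreasing, so this happens only on an initial interval where $J\equiv 0$), while the paper simply remarks that the case $I(u_j,x_0,r)=0$ for some $j$ is trivial by nonnegativity of $J$.
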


\begin{proof}
The function $J(r):= J_{\alpha_{k,N}}(u_1,\dots,u_k,x_0,r)$ is absolutely continuous in $r$, and hence a.e. $r \in (0,\rho:= \dist(x_0,\pa B_R))$ is a Lebesgue point of $J$. Moreover, for a.e. $r \in (0,\rho)$ the restrictions $u_j|_{S_r(x_0)}$ are functions in $H^1(S_r(x_0))$, and 
\[
\int_{S_r(x_0)} u_1^2\cdots u_k^2\,d\sigma = 0.
\]
We compute the derivative of $J$ with respect to the radius, denoted by $J'$, in any point $r$ for which both the above properties are satisfied, and in addition Lemma \ref{lem: ACF pre} holds, and verify that $J'(r) \ge 0$. 

We suppose that $I(u_j,x_0,r)>0$ for every $j$, otherwise the fact that $J'(r) \ge 0$ follows simply from the non-negativity of $J$. 

Let $u_{j,x_0,r}(\cdot)=u_j(x_0 + r \, \cdot)$. By assumption $\int_{\S^{N-1}} u_{1,x_0,r}^2 \cdots u_{k,x_0,r}^2 \, d\sigma = 0$ for a.e. $r$, and, by Lemma \ref{lem: ACF pre}, we have that 
\begin{align*}
\frac{J'(r)}{J(r)} &=  \sum_{j=1}^k\frac{I'(u_j,x_0,r)}{I(u_j,x_0,r)} -\frac{2\alpha_{k,N}}{r}  =  \sum_{j=1}^k \frac{ \int_{S_r(x_0)} |\nabla u_j|^2 |x|^{2-N}\, d\sigma}{\int_{B_r(x_0)} |\nabla u_j|^2 |x|^{2-N}\,dx}  -\frac{2\alpha_{k,N}}{r}\\
& \ge \frac2r \left( \sum_{j=1}^k \gamma(\lambda(u_{j,x_0,r}))-   \alpha_{k,N}\right) \ge 0, 
\end{align*}
which is the desired result.
\end{proof}

\subsection{Perturbed monotonicity formula} In this subsection we generalize the previous monotonicity formula in order to deal with non-segregated solutions of a class of elliptic systems. Recall that, for $\mf{u}=(u_1,\dots,u_k)$, we denote by $\hat{\mf{u}}_i$ the vector obtained by erasing the $i$-th component from $\mf{u}$. We consider 
\begin{equation}\label{subsys}
-\Delta u_i + u_i^{q_i} g_i(x,\hat{\mf{u}}_i) = 0, \quad u_i > 0 \quad \text{in $\R^N$, $i=1,\dots,k$},
\end{equation}
where $q_i \ge 1$ for every $i$, under the following assumptions on $g_i \in C(\R^N \times ([0,+\infty))^{k-1}, [0,+\infty))$: 
\begin{itemize}
\item[(H1)] $\bar g_i(\hat{\mf{t}}_i) := \inf_{x \in \R^N} g_i(x,\hat{\mf{t}}_i)$ is a continuous function, with the property that $\bar g_i(\hat{\mf{t}}_i) >0$ if $t_j>0$ for every $j$, and $\bar g_i(\hat{\mf{t}}_i) =0$ if at least one component of $\hat{\mf{t}}_i$ vanishes. Even more, we suppose that $g_i(x,\hat{\mf{t}}_i) = 0$ for every $x \in \R^N$, if one component of $\hat{\mf{t}}_i$ vanishes.
\item[(H2)] For every $x \in \R^N$, $g_i(x,\,\cdot)$ is monotone non-decreasing in all its variables.
\end{itemize}
A prototypical example is
\[
g_i(x,\hat{\mf{t}}_i) = b(x) \prod_{\substack{j=1\\ j \neq i}}^k  t_j^{p_{ij}}, \quad \text{with $\inf_{\R^N} b>0$ and $p_{ij} >0$}.
\] 

For $\mf{u}$ solving \eqref{subsys}, $x_0 \in \R^N$ and $r>0$, we use the following notation
\[
\tilde I_i(\mf{u},x_0,r) := \int_{B_r(x_0)} \left( |\nabla u_i|^2 + u_i^{q_i+1}g_i(x,\hat{\mf{u}}_i) \right) |x-x_0|^{2-N}\,dx.
\]
\begin{theorem}[Perturbed montonicity formula]\label{thm: acf per}
Let $\mf{u} \in H^1_{\loc}(\R^N,\R^k) \cap C(\R^N,\R^k)$ satisfy \eqref{subsys}, with $q_1,\dots,q_k \ge 1$, and $g_i:\R^N \times ([0,+\infty))^{k-1} \to [0,+\infty)$ continuous and satisfying (H1) and (H2). For $x_0 \in \R^N$, $\nu := \alpha_{k,N}$ and any $\eps>0$, there exists $\bar r >0$ such that the function
\[
r \mapsto \tilde J_{\nu-\eps}(\mf{u},x_0,r)  := \frac{1}{r^{2(\nu-\eps)}} \prod_{j=1}^k \tilde I_j(\mf{u},x_0,r) 
\]
is monotone non-decreasing for $r>\bar r$.
\end{theorem}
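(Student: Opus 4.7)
The plan is to follow the strategy used to prove Theorem \ref{thm: ACF}, treating the interaction term $u_i^{q_i+1}g_i$ as a perturbation whose contribution becomes negligible for large $r$. Taking logarithmic derivatives,
\[
\frac{\tilde J_{\nu-\eps}'(r)}{\tilde J_{\nu-\eps}(r)} = \sum_{i=1}^{k}\frac{\tilde I_i'(r)}{\tilde I_i(r)}-\frac{2(\nu-\eps)}{r},
\qquad
\tilde I_i'(r) = r^{2-N}\!\int_{S_r(x_0)}\!\bigl(|\nabla u_i|^2+u_i^{q_i+1}g_i\bigr)\,d\sigma,
\]
so it suffices to establish the pointwise inequality $\sum_i\tilde I_i'(r)/\tilde I_i(r)\geq 2(\nu-\eps)/r$ for $r>\bar r$. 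A convenient starting point is the identity $\tfrac12\Delta(u_i^2)=|\nabla u_i|^2+u_i^{q_i+1}g_i$ (coming from multiplying the PDE by $u_i$), together with Green's formula applied with the Newtonian kernel $|x-x_0|^{2-N}$: for $N\geq 3$ this yields, after absorbing the Dirac contribution at $x_0$,
\[
\tilde I_i(r) = \frac{r}{2}A_i'(r) + \frac{N-2}{2}\bigl(A_i(r)-A_i(0)\bigr),
\qquad
A_i(r):=\int_{\mathbb S^{N-1}}u_i(x_0+r\xi)^2\,d\sigma,
\]
showing that $\tilde I_i$ is recovered from the single angular average $A_i$, which is nondecreasing and convex since $u_i^2$ is subharmonic; the analogous formula for $\tilde I_i'$ in terms of $A_i'$ and $A_i''$ follows by differentiation.

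With this representation I would establish a perturbed analog of Lemma \ref{lem: ACF pre}: for a.e.~$r$,
\[
\frac{\tilde I_i'(r)}{\tilde I_i(r)}\;\geq\;\frac{2\gamma\bigl(\tilde\lambda_i(r)\bigr)}{r}\bigl(1-o(1)\bigr)\qquad\text{as }r\to\infty,
\]
where $\tilde\lambda_i(r)$ is an effective Dirichlet-type Rayleigh quotient on $\mathbb S^{N-1}$ attached to the angular profile $u_{i,x_0,r}$. The derivation imitates Lemma \ref{lem: ACF pre} (decompose $|\nabla u_i|^2$ into radial and tangential parts, apply Poincar\'e inequality on an effective angular support, then integrate the resulting ODE inequality to produce the factor $\gamma(\cdot)$); the nonnegative potential term $u_i^{q_i+1}g_i\geq 0$ only strengthens the inequality, thanks to the monotonicity (H2). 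The $o(1)$ correction absorbs the constant arising from the Green representation, and is harmless whenever $\tilde I_i(r)\to\infty$; if instead $\tilde I_i$ stayed bounded along a sequence, then (H1) combined with the equation would force $u_i$ to be essentially trivial, and the inequality becomes immediate.

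The main obstacle, and the core novelty of the statement, is a \emph{perturbed partition inequality}
\[
\sum_{i=1}^{k}\gamma\bigl(\tilde\lambda_i(r)\bigr)\;\geq\;\alpha_{k,N}-\tfrac{\eps}{2}\qquad\text{for }r>\bar r.
\]
Here (H1)--(H2) are decisive: since $g_i(x,\hat{\mf t}_i)$ vanishes whenever one component of $\hat{\mf t}_i$ does, the contribution of the interaction to $\tilde I_i$ must come from regions of simultaneous $k$-fold positivity, and this forces the angular profiles $u_{i,x_0,r}$ to become \emph{partially segregated in the limit} on $\mathbb S^{N-1}$. I would proceed by contradiction and blow-down: supposing that $\sum_i\gamma(\tilde\lambda_i(r_n))\leq\alpha_{k,N}-\eps/2$ along some $r_n\to\infty$, an appropriate $L^2(\mathbb S^{N-1})$-normalization of the restrictions $u_{i,x_0,r_n}|_{\mathbb S^{N-1}}$ would extract in the limit a $k$-tuple $(\bar u_1,\dots,\bar u_k)$ admissible in \eqref{opp} --- that is, satisfying $\int_{\mathbb S^{N-1}}\prod_j\bar u_j^2\,d\sigma=0$ --- yet realizing $\sum_i\gamma(\lambda(\bar u_i))\leq\alpha_{k,N}-\eps/2$, contradicting the very definition of $\alpha_{k,N}$. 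The delicate technical point is to transfer the pointwise positivity of the $u_i$'s at finite $r$ into the partial segregation of the limit; this is ensured by the weighted integrability encoded in the finiteness of $\tilde I_i(r_n)$, which penalizes simultaneous $k$-fold positivity via the potential term. Combining this partition inequality with the perturbed ACF-pre estimate, and choosing $\bar r$ large enough so that all $o(1)$ terms are absorbed into $\eps/r$, yields $\sum_i\tilde I_i'/\tilde I_i\geq 2(\nu-\eps)/r$, proving the theorem.
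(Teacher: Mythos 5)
Your proposal follows the same skeleton as the paper's proof: take the logarithmic derivative of $\tilde J$, reduce to a perturbed ACF-pre estimate $\tilde I_i'(r)/\tilde I_i(r)\ge 2\gamma(\Lambda_i(r))/r$, and establish the partition inequality $\sum_i\gamma(\Lambda_i(r))\ge\alpha_{k,N}-\eps$ for $r$ large by a blow-down contradiction argument. Two technical remarks are in order. First, the $o(1)$ error in your perturbed ACF-pre estimate is spurious, and the dichotomy ``either $\tilde I_i\to\infty$ or $u_i$ is essentially trivial'' is neither needed nor easy to justify. In the Green representation $\tilde I_i(r)=\tfrac{r}{2}A_i'(r)+\tfrac{N-2}{2}\bigl(A_i(r)-A_i(0)\bigr)$ that you derive, subharmonicity of $u_i^2$ gives $A_i(r)\ge A_i(0)$, so the constant enters with a favorable sign and one simply has $\tilde I_i(r)\le\tfrac{r}{2}A_i'(r)+\tfrac{N-2}{2}A_i(r)$; running the usual Cauchy--Schwarz-plus-optimization argument from this inequality produces the \emph{exact} bound $\tilde I_i'/\tilde I_i\ge 2\gamma(\Lambda_i)/r$ a.e., exactly as in Lemma \ref{lem: pre ACF per}, with no error term. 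Second, and more importantly, the paragraph where you prove the partition inequality glosses over the actual mechanism. You attribute the partial segregation of the blown-down limit to ``the weighted integrability encoded in the finiteness of $\tilde I_i(r_n)$,'' but that is not what drives the argument. Under the contradiction hypothesis the quantities $\Lambda_i(x_0,r_n)$ are \emph{bounded}; the crucial additional input is the subharmonicity lower bound $r_n^{1-N}\int_{S_{r_n}}u_i^2\,d\sigma\ge\omega_{N-1}\,u_i^2(0)>0$ on the $L^2(S_{r_n})$-normalizing constants. Combined with $r_n\to\infty$, this gives the rescaled interaction integral an explicit $r_n^{-2}$ suppression (see the chain of inequalities following the definition of $u_{i,n}$ in the paper's proof), which forces $\int_{S_1}u_{1,n}^{q_1+1}\bar g_1(\cdots)\,d\sigma\to0$; only then do Fatou's lemma and hypothesis (H1) yield $\tilde u_1\cdots\tilde u_k=0$ $\mathcal H^{N-1}$-a.e.\ on $\S^{N-1}$ for the weak limit, whence $\sum_i\gamma(\lambda(\tilde u_i))\ge\alpha_{k,N}$ by definition of $\alpha_{k,N}$ and lower semicontinuity. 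Without the subharmonicity lower bound on the normalizers, the argument you sketch does not close.
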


For the proof, we start with an estimate similar to the one in Lemma \ref{lem: ACF pre}. We introduce
\[
\Lambda_i(x_0,r)  = \frac{r^2 \int_{S_r(x_0)} \left(|\nabla_\theta u_i|^2 + u_i^{q_i+1} g_i(x,\hat{\mf{u}}_i)\right)\,d\sigma}{ \int_{S_r(x_0)} u_i^2\,d\sigma}, \quad i=1,\dots,k,
\]
which are well defined for a.e. $r$.
\begin{lemma}\label{lem: pre ACF per}
In the previous setting, for every $x_0 \in \R^N$ and a.e. $r>0$ 
\[
\int_{S_r(x_0)} \left(|\nabla u_i|^2 + u_i^{q_i+1} g_i(x,\hat{\mf{u}}_i)\right)|x-x_0|^{2-N}\,d\sigma \ge \frac{2 \gamma(\Lambda_i(x_0,r))}{r} \tilde I_i(\mf{u},x_0,r).
\]
\end{lemma}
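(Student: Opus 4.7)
The plan mirrors the proof of Lemma~\ref{lem: ACF pre}, the new ingredient being the pointwise identity
\[
|\nabla u_i|^2 + u_i^{q_i+1}g_i(x,\hat{\mf u}_i) \;=\; |\nabla u_i|^2 + u_i\Delta u_i \;=\; \operatorname{div}(u_i\nabla u_i),
\]
obtained by multiplying the PDE by $u_i$. This turns the integrand of $\tilde I_i$ into a pure divergence (and, since $g_i\ge 0$, also shows $u_i$ is subharmonic). Taking $x_0=0$ by translation and integrating $\operatorname{div}(u_i\nabla u_i)\,|x|^{2-N}$ by parts over $B_r$, a direct polar-coordinates computation of $\int_0^r\partial_\rho\bigl[\rho^{1-N}\int_{S_\rho}u_i^2\,d\sigma\bigr]d\rho/2$ gives
\[
\tilde I_i(\mf u, 0, r) \;=\; r^{2-N}\int_{S_r} u_i\,\partial_\nu u_i\,d\sigma + \frac{N-2}{2}\,r^{1-N}\int_{S_r} u_i^2\,d\sigma - \frac{(N-2)|\mathbb{S}^{N-1}|}{2}\,u_i(0)^2,
\]
the last term being nonnegative and hence dischargeable. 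For $N\ge 3$ the weight $|x|^{2-N}$ is singular at the origin, so I would justify this identity by first integrating against the $C^1$ regularization $\Phi_\delta$ from the notation section and passing to the limit $\delta\to 0$, using the continuity of $u_i$ to control the spurious contribution on $S_\delta$ (for $N=2$ the last two terms drop out and the identity is elementary).

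Since $\tilde I_i'(\mf u,0,r) = r^{2-N}\int_{S_r}(|\nabla u_i|^2 + u_i^{q_i+1}g_i)\,d\sigma$ at a.e.\ $r$, combining with the upper bound above reduces the target inequality $\tilde I_i'\ge (2\gamma(\Lambda_i)/r)\,\tilde I_i$ to the sphere-level estimate
\[
\int_{S_r}(|\nabla u_i|^2 + u_i^{q_i+1}g_i)\,d\sigma \;\ge\; \frac{2\gamma}{r}\int_{S_r} u_i\,\partial_\nu u_i\,d\sigma + \frac{\gamma(N-2)}{r^2}\int_{S_r} u_i^2\,d\sigma,
\]
with $\gamma := \gamma(\Lambda_i(0,r))$. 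Splitting $|\nabla u_i|^2 = |\partial_\nu u_i|^2 + |\nabla_\theta u_i|^2$ on $S_r$ and writing $A=\int_{S_r}|\partial_\nu u_i|^2\,d\sigma$, $C=\int_{S_r} u_i^2\,d\sigma$, the very definition of $\Lambda_i$ reads $\int_{S_r}(|\nabla_\theta u_i|^2 + u_i^{q_i+1}g_i)\,d\sigma = \Lambda_i C/r^2$, while Cauchy--Schwarz yields $\int_{S_r} u_i\,\partial_\nu u_i\,d\sigma \le \sqrt{AC}$.

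Assuming $C>0$ (else $u_i\equiv 0$ on $S_r$ and the claim is trivial) and setting $t := r\sqrt{A/C}$, what must be shown becomes the scalar inequality
\[
t^2 - 2\gamma t + \bigl(\Lambda_i - \gamma(N-2)\bigr)\ge 0.
\]
By the very definition of $\gamma(\Lambda_i)$ as the nonnegative root of $\tau^2 + (N-2)\tau - \Lambda_i = 0$, one has $\Lambda_i - \gamma(N-2) = \gamma^2$, so the quadratic collapses to $(t-\gamma)^2\ge 0$, closing the argument. The only delicate step is the justification of the IBP at the origin for $N\ge 3$; this is a routine $\Phi_\delta$-regularization argument, entirely analogous to the proof of Lemma~\ref{lem: ACF pre} (cf.~\cite{ST23}), and is the only nontrivial technical point in the proof.
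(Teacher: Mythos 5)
Your proof is correct and coincides with the standard argument that the paper is invoking by citation (to \cite[Lemma~5.1]{ST15}, \cite[Lemma~7.3]{CTV05}, \cite[Lemma~2.5]{NTTV10}): the pointwise divergence identity $|\nabla u_i|^2+u_i^{q_i+1}g_i=\operatorname{div}(u_i\nabla u_i)$, the integration by parts against $|x-x_0|^{2-N}$ (justified via $\Phi_\delta$ for $N\ge 3$; note $\Phi_\delta$ is superharmonic, so one can alternatively discard the interior term $\tfrac12\int u_i^2\Delta\Phi_\delta\le 0$ rather than computing it explicitly -- this is exactly why the paper introduces $\Phi_\delta$), the split $|\nabla u_i|^2=(\pa_\nu u_i)^2+|\nabla_\theta u_i|^2$ on $S_r$, Cauchy--Schwarz, and the observation that $\gamma=\gamma(\Lambda_i)$ is precisely the root of $\tau^2+(N-2)\tau-\Lambda_i=0$, which collapses the quadratic in $t=r\sqrt{A/C}$ to $(t-\gamma)^2\ge 0$. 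All steps check out, including the boundary-term bookkeeping in the IBP and the harmless discard of the term $\tfrac{(N-2)|\S^{N-1}|}{2}u_i(0)^2\ge 0$ (you call it ``nonnegative and dischargeable'' -- precise, since it enters with a minus sign, so dropping it yields an upper bound on $\tilde I_i$).
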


The proof is analogue to the one of Lemma \ref{lem: ACF pre}, we refer the interested reader to \cite[Lemma 5.1]{ST15} (see also \cite[Lemma 7.3]{CTV05} or \cite[Lemma 2.5]{NTTV10}) for the details.  

\begin{proof}[Proof of Theorem \ref{thm: acf per}] The proof is similar to the one of \cite[Lemma 5.2]{ST15} (see also \cite[Lemma 7.3]{CTV05}, \cite[Lemma 2.5]{NTTV10}). 
Without loss of generality, we fix $x_0=0$. As in the proof of Theorem \ref{thm: ACF}, thanks to Lemma \ref{lem: pre ACF per}, we have
\[
\frac{\tilde J'(r)}{\tilde J(r)}\ge \frac2{r}\left(\sum_{j=1}^k\gamma(\Lambda_i(x_0,r)) -(\alpha_{k,N} - \eps) \right),
\]
and the thesis follows if we show that the right hand side is non-negative for $r$ sufficiently large. Suppose by contradiction that this is not true: then there exists $r_n \to +\infty$ such that 
\begin{equation}\label{cfp}
\sum_{j=1}^k \gamma(\Lambda_i(x_0,r_n)) < \alpha_{k,N} - \eps,
\end{equation}
and, in particular, $\{\Lambda_i(x_0,r_n)\}$ ($i=1,\dots,k$) are bounded sequences. Without loss, from now on we fix $x_0=0$ to ease the notation. Let
\[
u_{i,n}(x) = \frac{u_i(r_n x)}{\left( \frac1{r_n^{N-1}} \int_{S_{r_n}} u_i^2\,d\sigma\right)^\frac12}.
\]
We have that 
\begin{align*}
 \int_{S_{1}} |\nabla_\theta u_{i,n}|^2 \,d\sigma \le \Lambda_i(0,r_n), \\
\end{align*}
so that $\{\mf{u}_n\}$ is bounded in $H^1(S_1,\R^k)$; moreover
\begin{align*}
\int_{S_1} u_{1,n}^{q_1+1} \bar g_1 \bigg( \Big( \frac1{r_n^{N-1}} \int_{S_{r_n}} & u_2^2\Big)^\frac12 u_{2,n}, \dots, \Big( \frac1{r_n^{N-1}} \int_{S_{r_n}} u_k^2 \Big)^\frac12 u_{k,n}   \bigg) d\sigma \\
&\le \frac{r_n^2 \int_{S_{r_n}} u_1^{q+1} g_1(x,\hat{\mf{u}}_1) \,d\sigma}{\int_{S_{r_n}} u_1^2 \,d\sigma} \cdot \frac{1}{r_n^2 \left( \frac1{r_n^{N-1}}\int_{S_{r_n}} u_1^2 \,d\sigma\right)^{\frac{q-1}2}} \\
&  \le \frac{\Lambda_1(0,r_n)}{r_n^2 \left( \frac1{r_n^{N-1}}\int_{S_{r_n}} u_1^2 \,d\sigma \right)^{\frac{q-1}2}} \to 0
\end{align*}
as $n \to \infty$, where we used assumption (H1) and the subharmonicity of $u_1$, which ensures that
\[
\frac1{r_n^{N-1}}\int_{S_{r_n}} u_i^2 \,d\sigma \ge u_i^2(0) >0 \qquad \forall n.
\]
Similar estimates also holds for the index $1$ replaced by any $i=2,\dots,k$. Therefore, we deduce that up to a subsequence $\mf{u}_n \weak \tilde{\mf{u}}$ weakly in $H^1(S_1)$, strongly in $L^2(S_1)$, and $\mathcal{H}^{N-1}$-almost everywhere, where $\tilde u_1 \cdots \tilde u_k = 0$ $\mathcal{H}^{N-1}$-a.e. on $S_1$: indeed, the previous estimates, Fatou's lemma, and the assumptions on $g_i$ ensure that
\[
\begin{split}
\int_{S_1} \tilde u_1^{q_1+1} \bar g_1(\delta \hat{\tilde{\mf{u}}}_i )\,d\sigma & \le \liminf_{n \to \infty} \int_{S_1} u_{1,n}^{q_1+1} \bar g_1(\delta \hat{\mf{u}}_{i,n}) \,d\sigma \\
& \le \liminf_{n \to \infty} \int_{S_1} u_{1,n}^{q_1+1} \bar g_1 \bigg( \Big( \frac1{r_n^{N-1}} \int_{S_{r_n}}  u_2^2\Big)^\frac12 u_{2,n}, \dots, \Big( \frac1{r_n^{N-1}} \int_{S_{r_n}} u_k^2 \Big)^\frac12 u_{k,n}   \bigg) d\sigma = 0,
\end{split}
\]
so that in $\mathcal{H}^{N-1}$-a.e. point of $S_1$ one component $\tilde u_i$ must vanish. 

Coming back to \eqref{cfp}, we obtain by definitions of $\alpha_{k,N}$ and $\gamma$ 
\[
\begin{split}
\alpha_{k,N} \le \sum_{j=1}^k\gamma(\lambda(\tilde u_i)) \le \liminf_{n \to \infty} \sum_{j=1}^k  \gamma(\Lambda_i(0,r_n)) < \alpha_{k,N}-\eps,
\end{split}
\]
which is a contradiction.
\end{proof}

\section{Liouville-type theorems}\label{sec: liou}

The validity of the ACF monotonicity formulae proved in Section \ref{sec: ACF} allows us to obtain some nonexistence results. We first recall some preliminaries.
 
\subsection{Essentially known results}

In this section we recall some useful preliminary results which are probably known, and will be often used in the next sections. At first, we recall the following estimate proved in \cite[Lemma 4.4]{CTV05}.

\begin{lemma}\label{lem: decay}
Let $R>0$, and let $u \in H^1(B_{2R})$ satisfy
\[
\begin{cases}
-\Delta u \le -M u & \text{in $B_{2R}$} \\
u \ge 0 & \text{in $B_{2R}$} \\
u \le A & \text{on $\pa B_{2R}$},
\end{cases}
\]
for some $A, M>0$. Then there exists $C>0$ such that
\[
\|u\|_{L^\infty(B_R)} \le C A e^{-\frac{R \sqrt{M}}{2}}.
\]
\end{lemma}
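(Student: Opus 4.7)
\textbf{Proof plan for Lemma \ref{lem: decay}.}

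The plan is to construct an explicit radial barrier for the coercive operator $-\Delta + M$ and then invoke the weak maximum principle. First I would rescale to normalize $M$: setting $\tilde u(y) := u(y/\sqrt{M})$ turns the differential inequality into $-\Delta \tilde u + \tilde u \le 0$ on $B_{2\rho}$ with $\rho := R\sqrt{M}$ and $\tilde u \le A$ on $\partial B_{2\rho}$, so it suffices to prove the estimate in the case $M=1$ with decay factor $e^{-\rho/2}$.

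Second, I would build a positive radial supersolution. Let $\Phi:[0,\infty) \to (0,\infty)$ be the smooth positive radial solution of the ODE
\[
\Phi''(r) + \frac{N-1}{r}\Phi'(r) - \Phi(r) = 0, \qquad \Phi(0)=1, \quad \Phi'(0)=0,
\]
i.e.\ (up to normalization) the modified Bessel profile $\Phi(r) = c_N\, r^{-(N-2)/2} I_{(N-2)/2}(r)$. Then $\Phi$ is strictly positive and monotone increasing in $r$, and $V(x) := \Phi(|x|)$ satisfies $-\Delta V + V = 0$ in $\R^N$. The normalized barrier
\[
v(x) := A\,\frac{\Phi(|x|)}{\Phi(2\rho)}
\]
solves $-\Delta v + v = 0$ in $B_{2\rho}$ and equals $A$ on $\partial B_{2\rho}$.

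Third, since $-\Delta(v - \tilde u) + (v - \tilde u) \ge 0$ in $B_{2\rho}$ and $v - \tilde u \ge 0$ on $\partial B_{2\rho}$, the weak maximum principle for the coercive operator $-\Delta + \mathrm{id}$ (applicable because $v - \tilde u \in H^1(B_{2\rho})$ with nonnegative trace) yields $\tilde u \le v$ throughout $B_{2\rho}$. By monotonicity of $\Phi$, for any $x \in B_\rho$,
\[
\tilde u(x) \le v(x) \le A\,\frac{\Phi(\rho)}{\Phi(2\rho)}.
\]
Unwinding the rescaling gives the desired pointwise bound in the original variables, provided one controls the ratio $\Phi(\rho)/\Phi(2\rho)$.

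The only nontrivial ingredient is the asymptotic estimate
\[
\frac{\Phi(\rho)}{\Phi(2\rho)} \le C_N\, e^{-\rho/2},
\]
which is where the main, though minor, obstacle lies. This follows at once from the classical Bessel asymptotics $I_\nu(z) \sim e^z/\sqrt{2\pi z}$ as $z\to\infty$, which give $\Phi(r) \sim r^{-(N-1)/2} e^r$ and hence a ratio of order $2^{(N-1)/2} e^{-\rho}$, comfortably stronger than the factor $e^{-\rho/2}$ stated. If one prefers a self-contained argument bypassing Bessel functions, the same bound can be derived directly from the ODE via a Riccati/Prüfer-type substitution $\psi := \Phi'/\Phi$, showing that $\psi(r) \to 1$ as $r\to\infty$ and integrating to get $\Phi(2\rho)/\Phi(\rho) \ge c_N\, e^{\rho}$ for $\rho \ge 1$; the range $\rho \le 1$ is handled by enlarging $C$. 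Combining the comparison with this ratio estimate and returning to the original variables yields $\|u\|_{L^\infty(B_R)} \le C A\, e^{-R\sqrt{M}/2}$, as claimed.
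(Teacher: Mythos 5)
Your proof is correct. Note, however, that the paper does not actually prove Lemma~\ref{lem: decay}; it simply cites it from \cite[Lemma 4.4]{CTV05} without reproducing the argument, so there is no ``paper proof'' to compare against. Your argument---rescale to normalize $M$, compare $\tilde u$ with the radial supersolution $A\Phi(|x|)/\Phi(2\rho)$ built from the modified Bessel profile $I_{(N-2)/2}$, then use the large-argument asymptotics $I_\nu(z)\sim e^z/\sqrt{2\pi z}$ to bound $\Phi(\rho)/\Phi(2\rho)$ by $C e^{-\rho}$ (hence comfortably by $Ce^{-\rho/2}$)---is the standard one and matches the method in the cited source. All the steps you invoke check out: $\Phi$ is indeed strictly increasing since $(r^{N-1}\Phi')' = r^{N-1}\Phi>0$ and $\Phi'(0)=0$; the weak maximum principle for $-\Delta+1$ applies to $w=v-\tilde u$ because $w^-\in H_0^1$ and testing with $w^-$ immediately gives $w^-\equiv 0$; and the small-$\rho$ regime is absorbed into the constant, as you say. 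The only part that is left as a sketch is the Riccati alternative; the claim that $\Phi(2\rho)/\Phi(\rho)\ge c_N e^\rho$ needs $\psi=\Phi'/\Phi$ close to $1$, whereas for the stated exponent $e^{-\rho/2}$ one only needs $\psi\ge 1/2$ for $r$ large, which follows more easily from $\psi'=1-\tfrac{N-1}{r}\psi-\psi^2$. This is a minor polish; the Bessel route you give first is complete as stated.
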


Now we state some Liouville-type theorems for globally H\"older continuous solutions of certain elliptic problems. Recall that, for some $\alpha \in (0,1)$, we say that $v$ is globally $\alpha$-H\"older continuous in $\R^N$ if the seminorm $[v]_{C^{0,\alpha}(\R^N)}$ is finite (no limitation on the $L^\infty$ norm of $v$ is required).

\begin{proposition}\label{prop: Liou eq}
Let $\alpha \in (0,1)$, and let $w$ be a globally $\alpha$-H\"older continuous function in $\R^N$. Suppose moreover that one the following equation is satisfied by $w$:
\begin{itemize}
\item[($i$)] either $\Delta w=0$ in $\R^N$;
\item[($ii$)] or $\Delta w=\lambda$ in $\R^N$ for some $\lambda \in \R$;
\item[($iii$)] or else $\Delta w = \lambda w$ with $\lambda >0$ in $\R^N$.  
\end{itemize}
Then $w$ must be constant and, in case ($iii$), $w \equiv 0$.
\end{proposition}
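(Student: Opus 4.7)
The three cases will be handled separately by classical Liouville-type arguments, each leveraging $\alpha<1$. For (i), I would rely on an interior gradient estimate for harmonic functions: applying the divergence theorem to $w\,e_i$ on $B_R(x_0)$ and using $\int_{\partial B_R(x_0)}\nu_i\,d\sigma=0$ gives
\[
\partial_i w(x_0)=\frac{1}{|B_R(x_0)|}\int_{\partial B_R(x_0)}\bigl(w(x)-w(x_0)\bigr)\nu_i\,d\sigma,
\]
whence $|\nabla w(x_0)|\le (N/R)\,[w]_{C^{0,\alpha}(\R^N)}R^\alpha=C R^{\alpha-1}$. Letting $R\to\infty$ forces $\nabla w\equiv 0$, so $w$ is constant.

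For (ii), the idea is to reduce to the bounded-harmonic case by finite differences. For any $h\in\R^N$, the function $g_h(x):=w(x+h)-w(x)$ satisfies $\Delta g_h=0$ (since $\Delta w=\lambda$ is constant) and is globally bounded by $[w]_{C^{0,\alpha}(\R^N)}|h|^\alpha$. The classical Liouville theorem for bounded harmonic functions then yields $g_h\equiv f(h)$ for some $f(h)\in\R$. Directly from the definition, $f\colon\R^N\to\R$ is additive and inherits $|f(h)|\le[w]_{C^{0,\alpha}(\R^N)}|h|^\alpha$. Using $f(h)=\tfrac{1}{n}f(nh)$ I then obtain
\[
|f(h)|\le n^{\alpha-1}\,[w]_{C^{0,\alpha}(\R^N)}|h|^\alpha\longrightarrow 0\qquad(n\to\infty),
\]
since $\alpha<1$; hence $f\equiv 0$, so $w$ is constant, and a posteriori $\lambda=0$.

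For (iii), the plan is to square the equation and invoke Lemma \ref{lem: decay}. Setting $v:=w^2\ge 0$, a direct computation yields
\[
\Delta v = 2|\nabla w|^2 + 2\lambda w^2 \ge 2\lambda\,v,
\]
so $-\Delta v\le -2\lambda v$; smoothness of $w$ (from elliptic regularity) makes $v\in H^1_{\loc}(\R^N)$. Fix $x_0\in\R^N$; the global Hölder bound gives $v\le A:=C\bigl(1+(|x_0|+2R)^{2\alpha}\bigr)$ on $\partial B_{2R}(x_0)$, and Lemma \ref{lem: decay} then produces
\[
w(x_0)^2=v(x_0)\le\|v\|_{L^\infty(B_R(x_0))}\le C\,A\,e^{-R\sqrt{2\lambda}/2}.
\]
Since $A$ grows only polynomially in $R$ while the exponential decays, letting $R\to+\infty$ gives $w(x_0)=0$, so $w\equiv 0$.

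The main subtlety lies in case (ii): $w$ itself need not be bounded, and it is the finite-difference trick that converts the Hölder growth into a genuine $L^\infty$ bound for an auxiliary harmonic function, after which additivity combined with sublinear control forces the differences to vanish. Cases (i) and (iii) are routine applications of, respectively, the harmonic gradient estimate and of the exponential-decay lemma already established in the paper.
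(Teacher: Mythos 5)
Your proposal is correct; all three cases are handled properly, but your route differs from the paper's in each one. For case ($i$), you give the explicit gradient estimate $|\nabla w(x_0)|\le CR^{\alpha-1}$ and let $R\to\infty$, which is a self-contained argument, whereas the paper simply cites the classical Liouville theorem (implicitly in its sub-linear-growth form, since $w$ need not be bounded). For case ($ii$) both you and the paper start from the finite difference $g_h = w(\cdot+h)-w(\cdot)$, but the mechanism afterwards diverges: the paper differentiates $w(x+h)=w(x)+c$ in $x_i$, concludes that each $\partial_{x_i}w$ is constant, and then kills that constant via sublinear growth of $w$; you instead show the map $h\mapsto f(h)=g_h$ is additive and use the dilation $f(h)=\tfrac1n f(nh)$ with $|f(h)|\le [w]_\alpha|h|^\alpha$ to force $f\equiv0$. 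Your version has the small advantage of not needing to differentiate $w$ (though smoothness is automatic by elliptic regularity, so this is a matter of taste). For case ($iii$), the paper applies Kato's inequality to obtain $\Delta w^\pm\ge\lambda w^\pm$ and runs Lemma~\ref{lem: decay} on $w^+$ and $w^-$ separately; you square, compute $\Delta(w^2)\ge 2\lambda w^2$, and run the same decay lemma on $v=w^2$. Both give $w\equiv0$; Kato's inequality would also cover a merely continuous weak solution, but since $w$ is smooth here the squaring trick is equally valid. One small imprecision worth flagging: in ($i$), the identity $\partial_i w(x_0)=\frac{1}{|B_R|}\int_{\partial B_R(x_0)}w\nu_i\,d\sigma$ does not come from the divergence theorem alone — it also uses the mean value property for the (harmonic) function $\partial_i w$, which you should state explicitly.
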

\begin{proof}
Point ($i$) is the classical Liouville theorem for harmonic functions.\\
($ii$) For a fixed $h \in \R^N$, let $\varphi(x):= w(x+h)-w(x)$, defined for $x \in \R^N$. Plainly, $\varphi$ is harmonic and globally H\"older continuous. Therefore, it must be equal to a constant $c \in \R$, which implies that
\[
w(x+h) = w(x) + c \quad \forall x \in \R^N.
\]
By taking the derivative of both sides with respect to $x_i$, we infer that
\[
\pa_{x_i} w(x+h) = \pa_{x_i} w(x) \quad \forall x \in \R^N.
\]
Since this holds true for every $h \in \R^N$, all the partial derivatives $\pa_{x_i} w$ must be constant on $\R^N$. On the other hand, the global H\"older continuity implies that $w$ has at most strictly sublinear growth at infinity, and hence such partial derivatives must vanish everywhere.\\
($iii$) Notice that $\Delta w^+ \ge \lambda w^+$ in $\R^N$, by Kato's inequality. Since $0 \le w^+(x) \le C(1+|x|^\alpha)$ on $\R^N$, by Lemma \ref{lem: decay} we deduce that for every $R>0$ there exists $C>0$ (independent of $R$) such that
\[
w^+(x) \le C (1+(2R)^\alpha) e^{-\frac12 R \sqrt{\lambda}} \quad \text{for every $x  \in B_R$}.
\]
By taking the limit as $R \to \infty$, we deduce that $w^+ \equiv 0$. In a similar way, we can also show that $w^- \equiv 0$.
\end{proof}

Now we recall some Liouville-type theorems for systems with $2$ components.

\begin{proposition}\label{prop: Liou sys 2}
Let $\alpha \in (0,1)$, and let $u,v \in H^1_{\loc}(\R^N) \cap C(\R^N)$ be globally $\alpha$-H\"older continuous functions in $\R^N$. Suppose moreover that one the following equation is satisfied by $(u,v)$:
\begin{itemize}
\item[($i$)] either 
\[
\begin{cases}
\Delta u= \lambda \, u\, v^2   \\
\Delta v= \lambda \, u^2 \, v, \end{cases} \qquad u, v \ge 0  \qquad\text{in $\R^N$,}
\]
with $\lambda>0$;
\item[($ii$)] or 
\beq\label{sys2seg}
\begin{cases}
\Delta u \ge 0   \\
\Delta v \ge 0 \\
u \, v \equiv 0,
\end{cases}
\qquad u, v \ge 0, \qquad  \text{in $\R^N$}.
\eeq
\end{itemize}
Then one component of $(u,v)$ must vanish; moreover, in case ($i$), the other must be constant.
\end{proposition}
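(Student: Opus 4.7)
The plan is a contradiction argument based on the ACF-type monotonicity formulae of Section \ref{sec: ACF}, together with the Friedland--Hayman value $\alpha_{2,N}=2$. I assume $u$ and $v$ are both non-trivial and exhibit a centre $x_0 \in \R^N$ and polynomial bounds of the form $\leq C r^{2\alpha}$ with $\alpha<1$ for the weighted Dirichlet integrals $I(u,x_0,r),\, I(v,x_0,r)$ (or their perturbed variants $\tilde I_i$). Since $J_{\alpha_{2,N}}(u,v,x_0,r)= r^{-4}I(u)I(v)$ is monotone non-decreasing while the bounds force it to decay like $r^{4(\alpha-1)} \to 0$, the quantity must vanish identically on $(0,\infty)$; the non-decreasingness of each factor then forces one of $I(u,x_0,\cdot),\, I(v,x_0,\cdot)$ to be identically zero, i.e. the corresponding component to be constant. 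Together with the value at $x_0$ or the equation, this constant is zero, contradicting non-triviality.

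For case (ii), I select $x_0 \in \partial\{u>0\}$, which is a non-empty boundary since $\{u>0\}$ is a non-empty proper open subset of $\R^N$ by the non-triviality and by $uv\equiv 0$. A short continuity argument exploiting sequences $y_n \to x_0$ with $u(y_n)>0$ (forcing $v(y_n)=0$) and $z_n \to x_0$ with $u(z_n)=0$ yields $u(x_0)=v(x_0)=0$. The global Hölder bound then gives $u(x),\,v(x) \leq C|x-x_0|^\alpha$, and the standard Caccioppoli inequality for non-negative subharmonic functions delivers $\int_{B_\rho(x_0)}|\nabla u|^2 \,dx \leq C\rho^{N-2+2\alpha}$ at every scale $\rho>0$. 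A dyadic decomposition of the singular weight $|x-x_0|^{2-N}$ over annuli centred at $x_0$ then gives $I(u,x_0,r),\,I(v,x_0,r) \leq C r^{2\alpha}$, and Theorem \ref{thm: ACF} (with $k=2$, $\nu=\alpha_{2,N}=2$) closes the argument as sketched.

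For case (i), I apply Theorem \ref{thm: acf per} with $g_1(x,\hat{\mf u}_1)=\lambda v^2,\, g_2(x,\hat{\mf u}_2)=\lambda u^2$ and $q_1=q_2=1$ (these satisfy (H1)--(H2)) at an arbitrary centre $x_0$ and for some $\eps\in(0,2(1-\alpha))$, a non-empty range because $\alpha<1$. Testing $-\Delta u + \lambda u v^2=0$ against $\varphi^2 u$ yields the energy estimate
\[
\int_{B_r(x_0)} \bigl(|\nabla u|^2+\lambda u^2 v^2\bigr)\,dx \leq \frac{C}{r^2}\int_{B_{2r}(x_0)} u^2\,dx \leq C r^{N-2+2\alpha}
\]
for $r\geq 1$, since the Hölder bound $u(x) \leq u(x_0)+C|x-x_0|^\alpha$ makes the $|x-x_0|^{2\alpha}$ contribution dominant at large scales. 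Combining with local $C^{1,\beta}$ elliptic estimates for the bounded local solution (available from the PDE on $B_2(x_0)$) to handle the near-$x_0$ annuli, the same dyadic splitting gives $\tilde I_i(\mf u,x_0,r) \leq C r^{2\alpha}$ for $r\geq 1$. Theorem \ref{thm: acf per} provides a radius $\bar r$ beyond which $\tilde J_{2-\eps}$ is non-decreasing, while the bounds force $\tilde J_{2-\eps}(\mf u,x_0,r) \leq C r^{4\alpha-4+2\eps} \to 0$; hence $\tilde J_{2-\eps}\equiv 0$ on $(\bar r,\infty)$, the monotonicity of the $\tilde I_i$ forces one of them to vanish identically, and the PDE then makes the corresponding component constant with $uv\equiv 0$ a.e., contradicting non-triviality of both components. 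Once one component, say $v$, has been shown to vanish identically, the reduced equation $\Delta u=0$ together with global Hölder continuity lets Proposition \ref{prop: Liou eq}(i) conclude that $u$ is constant.

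The main technical obstacle is the energy bound at centres $x_0$ where $u$ does not vanish (inevitable in case (i), since $u,v>0$ may well hold throughout $\R^N$): the Hölder control applies only to the difference $u(x)-u(x_0)$, so some care is needed to see that in the Caccioppoli estimate the constant-in-$x$ contribution $u(x_0)^2$ produces only a lower-order $r^{N-2}$ term, and that the near-$x_0$ annuli of the dyadic splitting (where the singular weight $|x-x_0|^{2-N}$ diverges) contribute only a bounded amount, via local elliptic regularity. Once these estimates are in place, the quantitative gap $4\alpha < 4$ (or $4\alpha<4-2\eps$ in the perturbed setting) is exactly the Friedland--Hayman slack $\alpha_{2,N}=2$ producing the desired $r^{4(\alpha-1)}$ decay of the monotone quantity.
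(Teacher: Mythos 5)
The paper does not give its own proof of this proposition; it appeals directly to \cite[Section~2]{NTTV10}. Your proposal supplies a self-contained argument built from the paper's own ACF machinery (Theorems~\ref{thm: ACF} and~\ref{thm: acf per} with $k=2$ and $\alpha_{2,N}=2$), and in spirit it reproduces the argument of \cite{NTTV10}. Relative to the paper, what your route buys is internal completeness: everything needed is already proved in Section~\ref{sec: ACF}, together with the Friedland--Hayman value $\alpha_{2,N}=2$, which the paper already invokes. Your argument is correct. You correctly distinguish the two energy estimates: in case~(ii), choosing $x_0\in\partial\{u>0\}$ forces $u(x_0)=v(x_0)=0$, so the Caccioppoli bound $\int_{B_\rho}|\nabla u|^2\,dx\le C\rho^{N-2+2\alpha}$ holds at \emph{every} scale and the dyadic summation of the weight $|x-x_0|^{2-N}$ converges with no extra input; in case~(i) the centre may satisfy $u(x_0),v(x_0)>0$, so the Caccioppoli bound carries a $u(x_0)^2\rho^{N-2}$ term which is harmless only for $\rho\ge 1$, and the near-$x_0$ annuli have to be handled via local elliptic regularity (the PDE with continuous right-hand side gives $\nabla u\in L^\infty_{\loc}$), precisely as you indicate.

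Two small points worth making explicit in a final write-up. First, Theorem~\ref{thm: acf per} is stated for solutions of \eqref{subsys} with $u_i>0$, not merely $u_i\ge 0$; under the contradiction hypothesis that both $u,v\not\equiv 0$, strict positivity follows from the strong maximum principle applied to $\Delta u=(\lambda v^2)u$ and $\Delta v=(\lambda u^2)v$, viewed as Schr\"odinger equations with nonnegative potentials. Second, the deduction from ``the monotone product $J_2$ (or $\tilde J_{2-\eps}$ for $r>\bar r$) vanishes on a final segment'' to ``one factor vanishes identically'' uses, in addition to the monotonicity of the product, the individual monotonicity of each $I$ (or $\tilde I$) in $r$: if one factor is positive at some radius it remains positive for all larger radii, so the other must vanish on a terminal interval and hence identically. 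You state the conclusion correctly, but the one-line justification deserves to appear.
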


These results are proved in \cite[Section 2]{NTTV10}.

\subsection{Liouville theorem for partially segregated functions} We aim to obtain some Liouville-type theorems similar to Proposition \ref{prop: Liou sys 2} for systems modelling partial segregation. The first main result of this section is the following:

\begin{theorem}\label{thm: liou seg}
Let $k \ge 3$ and $N \ge 2$ be positive integers, and let $\alpha \in (0,\alpha_{k,N}/k)$. For $i=1,\dots,k$, let $u_i \in H^1_{\loc}(\R^N) \cap C(\R^N)$ be globally $\alpha$-H\"older continuous functions in $\R^N$, such that
\[
-\Delta u_i \le 0 \quad \text{and} \quad u_i \ge 0 \quad \text{in $\R^N$},
\]
and moreover the partial segregation condition holds:
\[
\prod_{j=1}^k u_j  \equiv 0 \quad \text{in $\R^N$}.
\]
Then at least one function $u_j$ must be constant.
\end{theorem}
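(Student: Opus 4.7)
\medskip

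The plan is to apply the ACF monotonicity formula of Theorem~\ref{thm: ACF} at the origin with the critical exponent $\nu = \alpha_{k,N}$, and to couple this with an upper growth estimate on each factor $I(u_i,0,r)$ derived from the Hölder hypothesis, so as to force $J_\nu(u_1,\dots,u_k,0,r)$ to vanish in the limit $r\to\infty$. The partial segregation assumption $\prod_{j=1}^k u_j \equiv 0$ immediately gives $\int_{B_R} u_1^2\cdots u_k^2\,dx = 0$ for every $R>0$, so, together with the nonnegativity and subharmonicity of each $u_i$, Theorem~\ref{thm: ACF} applies and yields that $r\mapsto J_\nu(u_1,\dots,u_k,0,r)$ is monotone nondecreasing on $(0,\infty)$. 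Hence it suffices to prove that $J_\nu(r)\to 0$ as $r\to\infty$: this forces $J_\nu\equiv 0$, and the conclusion of the theorem will follow from a simple combinatorial argument.

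The key intermediate estimate is:
\[
I(u_i,0,r) \le C\, r^{2\alpha} \qquad \text{for every } i=1,\dots,k \text{ and every } r\ge 1.
\]
Set $w_i := u_i - u_i(0)$, so that $\nabla w_i = \nabla u_i$, $\Delta w_i \ge 0$, and the global $\alpha$-Hölder continuity of $u_i$ gives $|w_i(x)|\le [u_i]_{C^{0,\alpha}(\R^N)}\,|x|^\alpha$. Denoting $M_\rho:=\sup_{B_{2\rho}}|w_i|\le C\rho^\alpha$, the function $\sigma:=M_\rho - w_i$ is nonnegative and superharmonic in $B_{2\rho}$ with $\sigma\le 2M_\rho$; testing $-\Delta \sigma\ge 0$ against $\sigma\eta^2$ with $\eta$ a standard cutoff supported in $B_{2\rho}$ and equal to $1$ on $B_\rho$, and absorbing on the left via Cauchy-Schwarz, yields the Caccioppoli-type bound
\[
\int_{B_\rho} |\nabla u_i|^2\,dx = \int_{B_\rho} |\nabla \sigma|^2\,dx \le C M_\rho^2 \rho^{N-2} \le C \rho^{N-2+2\alpha}.
\]
To handle the singular weight $|x|^{2-N}$ in $I(u_i,0,r)$, I would split $B_r = B_1 \cup \bigcup_{j=0}^{\lfloor\log_2 r\rfloor}(B_{2^{j+1}}\setminus B_{2^j})$; the contribution of $B_1$ is a fixed constant, while on each dyadic annulus the weight is bounded by $2^{-j(N-2)}$ and the gradient integral by $C\,2^{(j+1)(N-2+2\alpha)}$, leading to a geometric sum in $2^{2j\alpha}$ whose total is $\le C r^{2\alpha}$.

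Combining these estimates with the definition of $J_\nu$ gives
\[
J_{\alpha_{k,N}}(u_1,\dots,u_k,0,r) \le \frac{(Cr^{2\alpha})^k}{r^{2\alpha_{k,N}}} = C^k\, r^{2(k\alpha - \alpha_{k,N})} \xrightarrow[r\to\infty]{} 0,
\]
since by hypothesis $\alpha < \alpha_{k,N}/k$. The monotonicity therefore forces $J_{\alpha_{k,N}}\equiv 0$. Since each $r\mapsto I(u_i,0,r)$ is itself nondecreasing, the set $A_i:=\{r>0: I(u_i,0,r)=0\}$ is a downward interval of the form $(0,R_i)$ or $(0,R_i]$ or $(0,+\infty)$; as the finite union $\bigcup_{i=1}^k A_i$ must cover $(0,+\infty)$, at least one index $i_0$ must satisfy $A_{i_0}=(0,+\infty)$, i.e.\ $\nabla u_{i_0}\equiv 0$ a.e.\ in $\R^N$, and hence $u_{i_0}$ is constant.

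The most delicate step is the subharmonic Caccioppoli inequality together with the dyadic summation: the auxiliary function $w_i$ has no definite sign, so a direct Caccioppoli test on $w_i$ fails, and the trick of passing through the nonnegative superharmonic function $M_\rho - w_i$ is essential; moreover the summation over dyadic shells only converges because $\alpha>0$, and the sharp rate $r^{2\alpha}$ is precisely what compensates the normalization $r^{-2\alpha_{k,N}}$ under the assumption $k\alpha < \alpha_{k,N}$.
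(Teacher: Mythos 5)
Your overall strategy is essentially the one the paper uses: combine the ACF monotonicity formula (Theorem~\ref{thm: ACF}) with an upper growth bound $I(u_i,\cdot,r)\le C r^{2\alpha}$ derived from subharmonicity and global H\"older continuity, and note that $k\alpha < \alpha_{k,N}$ forces the product to vanish. The paper obtains the upper bound by testing the subharmonic inequality directly with $\eta^2\Phi_\delta(\cdot-x_0) u_i$ (citing \cite[Proof of Proposition 2.2]{NTTV10}), which builds the weight $|x|^{2-N}$ into the estimate in one step; you reconstruct it via a Caccioppoli estimate on balls plus a dyadic summation. Your closing step --- passing from $J_{\alpha_{k,N}}\equiv 0$ to ``some $I(u_{i_0},0,\cdot)\equiv 0$'' by observing that each $A_i=\{r:I(u_i,0,r)=0\}$ is a downward interval whose union must cover $(0,\infty)$ --- is a clean direct argument where the paper argues by contradiction; this is a minor stylistic difference, not a substantive one.

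There is, however, a genuine sign error in your Caccioppoli step. The function $\sigma := M_\rho - w_i$ is nonnegative and \emph{super}harmonic; testing $-\Delta\sigma\ge 0$ against $\sigma\eta^2\ge 0$ yields $\int\nabla\sigma\cdot\nabla(\sigma\eta^2)\ge 0$, i.e.\ $\int\eta^2|\nabla\sigma|^2 + 2\int\sigma\eta\,\nabla\sigma\cdot\nabla\eta \ge 0$. This bounds the weighted Dirichlet energy from \emph{below}, not above; after absorbing the cross term you obtain only the trivial inequality $\tfrac32\int\eta^2|\nabla\sigma|^2 \ge -2\int\sigma^2|\nabla\eta|^2$. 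The self-testing Caccioppoli inequality you invoke holds for nonnegative \emph{sub}harmonic functions, not superharmonic ones. The fix is immediate: replace $M_\rho - w_i$ by $w_i + M_\rho$, which is nonnegative subharmonic in $B_{2\rho}$. Testing $\Delta(w_i+M_\rho)\ge 0$ against $(w_i+M_\rho)\eta^2\ge 0$ then gives the desired $\int\eta^2|\nabla w_i|^2 \le 4\int(w_i+M_\rho)^2|\nabla\eta|^2 \le CM_\rho^2\rho^{N-2}$. Finally, rather than calling the contribution of $B_1$ to $I(u_i,0,r)$ ``a fixed constant,'' you should justify its finiteness (for $N\ge3$ the weight is not integrable against a general $L^2$ gradient); this follows by extending your dyadic decomposition inward, since the same shell estimate gives a contribution $\sim 2^{2j\alpha}$ for $j<0$, which still sums.
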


\begin{proof}
By using the monotonicity formula we show that this provides a contradiction, following the same strategy developed in \cite[Proposition 2.2]{NTTV10}. 

Let us suppose by contradiction that $u_i$ is non-constant in $\R^N$ for every $i=1,\dots,k$. This implies that there exists $x_0 \in \R^N$ and $\bar r>0$ sufficiently large such that $I(u_i,x_0,\bar r)>0$ for every $i$, so that, by the monotonicity formula in Theorem \ref{thm: ACF},
\[
\prod_{j=1}^k I(u_i,x_0,r) \ge C_1 r^{2\alpha_{k,N}} \qquad \text{for $r>\bar r$},
\]
with $C_1>0$. Let now $r> \bar r$, and consider a radial smooth cutoff function $\eta$ such that $0 \le \eta \le 1$, $\eta =1$ in $B_r(x_0)$, $\eta = 0$ in $\R^N \setminus B_{2r}(x_0)$, and $|\nabla \eta| \le C/r$. By testing the inequality satisfied by $u_i$ with $\eta^2 \Phi_\delta(x-x_0) u_i$ (with $\Phi_{\delta}$ defined in \eqref{reg_fun}), and proceeding as in \cite[Proof of Proposition 2.2]{NTTV10}, we obtain
\[
\begin{split}
I(u_i,x_0,r) & \le C_2 r^{2\alpha},
\end{split}
\]
with $C_2>0$. By combining the former inequalities, we conclude that for $r>\bar r$
\[
C_1 r^{2\alpha_{k,N}}  \le C_2 r^{2k\alpha}, 
\]
which is a contradiction for large $r$ since $k \alpha < \alpha_{k,N}$.
\end{proof}

In the next section, we will use the following variant.

\begin{corollary}\label{cor: liou seg}
Let $N \ge 2$ be a positive integer, and let $\alpha \in (0,\alpha_{3,N}/3)$. For $i=1,2,3$, let $u_i \in H^1_{\loc}(\R^N) \cap C(\R^N)$ be globally $\alpha$-H\"older continuous functions in $\R^N$, such that
\[
-\Delta u_i \le 0 \quad \text{and} \quad u_i \ge 0 \quad \text{in $\R^N$},
\]
and moreover the partial segregation condition holds:
\[
u_1 \, u_2 \, u_3 \equiv 0 \quad \text{in $\R^N$}.
\]
Then at least one component $u_i$ vanishes identically.
\end{corollary}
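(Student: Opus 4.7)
The plan is to derive Corollary \ref{cor: liou seg} as a short consequence of Theorem \ref{thm: liou seg} combined with the two-component Liouville result in Proposition \ref{prop: Liou sys 2}(ii). The key observation is that going from \emph{constant} (as delivered by Theorem \ref{thm: liou seg}) to \emph{identically zero} requires only one extra step, because a nonzero constant component in a partially segregated triple forces the other two components to be pointwise disjoint, reducing the problem to the classical fully segregated case with two components.

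First I would apply Theorem \ref{thm: liou seg} with $k=3$ to the triple $(u_1,u_2,u_3)$: all the hypotheses (globally $\alpha$-H\"older continuity with $\alpha < \alpha_{3,N}/3$, subharmonicity, non-negativity, partial segregation $u_1 u_2 u_3 \equiv 0$) are satisfied, so the theorem yields that at least one component, say $u_3$, must be constant, $u_3 \equiv c \geq 0$ in $\R^N$. If $c=0$ there is nothing left to prove. Otherwise $c>0$ everywhere, and the partial segregation condition $u_1\,u_2\,u_3 \equiv 0$ collapses to the full pairwise segregation condition
\[
u_1(x)\,u_2(x) = 0 \qquad \text{for every } x \in \R^N.
\]

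At this stage, $(u_1,u_2)$ is a pair of nonnegative, subharmonic, globally $\alpha$-H\"older continuous functions on $\R^N$ with $u_1\,u_2 \equiv 0$, which is exactly the setting of Proposition \ref{prop: Liou sys 2}(ii). Applying it gives that one of $u_1$ or $u_2$ vanishes identically, which completes the proof.

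There is essentially no obstacle: the real work has already been done in proving Theorem \ref{thm: liou seg} via the ACF-type monotonicity formula, and Proposition \ref{prop: Liou sys 2}(ii) is a classical statement from the fully segregated two-component theory. The only mild subtlety is to remember that Theorem \ref{thm: liou seg} delivers a \emph{constant}, not necessarily zero, component; the case distinction $c=0$ vs.\ $c>0$ above is precisely what upgrades this conclusion to the stronger statement required by the corollary.
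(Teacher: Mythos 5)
Your proposal is correct and follows essentially the same route as the paper's own proof: apply Theorem \ref{thm: liou seg} to obtain a constant component, then handle the case $c>0$ by observing that the remaining two components satisfy the full segregation condition \eqref{sys2seg} and invoking Proposition \ref{prop: Liou sys 2}(ii). No gaps, and nothing substantively different from the argument in the paper.
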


\begin{proof}
By Theorem \ref{thm: liou seg}, at least one component $u_i$ is constant, $u_i \equiv c \in \R$ on $\R^N$. If $c=0$, then the proof is complete. Suppose instead that $c>0$. Then the remaining components, which we denote by $u$ and $v$, satisfy system \eqref{sys2seg}, and hence by Proposition \ref{prop: Liou sys 2} one of them must vanish identically.    
\end{proof}

%
%
%
%
%


\subsection{Liouville theorem for solutions to certain elliptic systems}\label{sub: liou} 

The second main result of this section is the following:

\begin{theorem}\label{thm: liou sub}
Let $k \ge 3$ and $N \ge 2$ be positive integers, and, for $i=1,\dots,k$, let $u_i \in H^1_{\loc}(\R^N) \cap C(\R^N)$ be nonnegative functions satisfying a system of type \eqref{subsys} in $\R^N$, where $q_1,\dots,q_k \ge 1$, and  the coupling terms $g_i$ satisfy assumptions (H1) and (H2). Assume moreover that 
\[
0\le u_i(x) \le C(1+|x|^{\alpha}) \quad \text{for every $x \in \R^N$},
\]
with $\alpha <\alpha_{k,N}/k$. Then at least one component $u_i$ vanishes identically.
\end{theorem}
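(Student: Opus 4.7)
The plan is a contradiction argument modeled on Theorem~\ref{thm: liou seg}, combining the perturbed ACF-type formula (Theorem~\ref{thm: acf per}) as a lower bound with a Caccioppoli-type energy estimate as an upper bound. Suppose toward a contradiction that every $u_i$ is non-trivial; then by the system \eqref{subsys} one has $u_i > 0$ everywhere on $\R^N$, and assumption (H1) ensures $g_i(x,\hat{\mf{u}}_i) > 0$ at every point, so each $\tilde I_j(\mf{u},0,r)$ is strictly positive for every $r > 0$.

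I would first fix $\eps > 0$ so small that $\nu - \eps > k\alpha$, which is possible since $k\alpha < \alpha_{k,N} = \nu$. By Theorem~\ref{thm: acf per} applied at $x_0 = 0$, the map $r \mapsto \tilde J_{\nu-\eps}(\mf{u},0,r)$ is monotone non-decreasing on $(\bar r, \infty)$, so picking any $r_0 > \bar r$ yields
\[
\prod_{j=1}^k \tilde I_j(\mf{u},0,r) \ge C_1 \, r^{2(\nu - \eps)} \quad \text{for all } r > r_0,
\]
with $C_1 = \tilde J_{\nu-\eps}(\mf{u},0,r_0) > 0$.

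The next step is to establish, for each $i$, the matching upper bound $\tilde I_i(\mf{u},0,r) \le C_2 \, r^{2\alpha}$ for $r \ge 1$. Let $\eta$ be a radial cutoff with $\eta \equiv 1$ on $B_r$, $\supp \eta \subset B_{2r}$, $|\nabla \eta| \le C/r$, and $|\Delta \eta| \le C/r^2$. Testing the equation $\Delta u_i = u_i^{q_i} g_i(x,\hat{\mf{u}}_i)$ against $\eta^2 \Phi_\delta u_i$ (using the constant $1$ in place of $\Phi_\delta$ when $N=2$) and integrating by parts twice yields
\[
\int_{\R^N} \eta^2 \Phi_\delta \bigl(|\nabla u_i|^2 + u_i^{q_i+1} g_i(x,\hat{\mf{u}}_i)\bigr)\,dx = \tfrac12 \int_{\R^N} u_i^2 \, \Delta(\eta^2 \Phi_\delta)\,dx.
\]
Since $\Phi_\delta$ is superharmonic, only the cross terms $\Phi_\delta \Delta \eta^2$ and $\nabla \eta^2 \cdot \nabla \Phi_\delta$ survive on the right, both supported on the annulus $\{r \le |x| \le 2r\}$ and of order $r^{-N}$ there; inserting $u_i \le C(1+|x|^\alpha) \le C r^\alpha$ on this annulus bounds the right-hand side by $C r^{2\alpha}$. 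Letting $\delta \to 0$ via monotone convergence recovers $\tilde I_i(\mf{u},0,r)$ on the left, exactly as in the purely subharmonic argument of Theorem~\ref{thm: liou seg} (cf.\ \cite[Proposition~2.2]{NTTV10}).

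Combining the two estimates gives $C_1 \, r^{2(\nu - \eps)} \le C_2^k r^{2k\alpha}$ for all $r > r_0$, contradicting $\nu - \eps > k\alpha$ as $r \to \infty$. The main delicate point is bookkeeping in the upper bound: the extra nonnegative term $u_i^{q_i+1} g_i$ that would normally be absent in a purely subharmonic Caccioppoli estimate here appears on the left-hand side (since $u_i$ solves $-\Delta u_i + u_i^{q_i} g_i = 0$, not just $\Delta u_i \ge 0$), so it fits exactly into the definition of $\tilde I_i$ and is absorbed for free; all the real work lies in showing that $\Delta(\eta^2 \Phi_\delta)$ is pointwise dominated on the annulus at the correct order.
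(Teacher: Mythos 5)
Your proof is correct and follows essentially the same route as the paper's: contradiction via the strong maximum principle to ensure all components are strictly positive, the perturbed ACF monotonicity (Theorem~\ref{thm: acf per}) at $x_0=0$ for the polynomial lower bound $\prod_j \tilde I_j \ge C_1 r^{2(\nu-\eps)}$, and the Caccioppoli-type estimate from testing with $\eta^2\Phi_\delta u_i$ for the upper bound $\tilde I_i \le C_2 r^{2\alpha}$. The only difference is that you spell out the Caccioppoli computation while the paper delegates it to references.
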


\begin{proof}
Suppose by contradiction that all the components are non-trivial. By the maximum principle, we have that $u_i>0$ in $\R^N$ for every $i$. Let $0<\eps<\alpha_{k,N}-k\alpha$. Then, by Theorem \ref{thm: acf per}, there exist $C, \bar r>0$ such that 
\begin{equation}\label{below'}
\prod_{j=1}^k \tilde I_j(\mf{u}, 0,r) \ge Cr^{2(\alpha_{k,N}-\eps)}
\end{equation}
for $r> \bar r$. On the other hand, let $\eta$ be a cutoff function as in the proof of Theorem \ref{thm: liou seg}, and let $\Phi_{\delta}$ be defined in \eqref{reg_fun}. By testing the inequality satisfied by $u_i$ by $\eta^2 \Phi_{\delta} u_i$, we obtain (as in \cite[Proposition 2.6]{NTTV10} or \cite[Theorem 3.4]{ST23})
\[
\tilde I_i(\mf{u},0,r) \le C r^{2\alpha}
\]
for $r> 1$ and for every $i$. This gives a contradiction with \eqref{below'} for $r$ large, since $k\alpha <\alpha_{k,N}-\eps$.
\end{proof}

\section{Proof of Theorem \ref{thm: holder bounds}}\label{sec: hold bounds}

The proof of Theorem \ref{thm: holder bounds} is the content of the rest of this section. The general strategy is inspired by \cite{CTV05, NTTV10, STTZ16, TVZ16, Wa}. However, we have to substantially modify the argument in order to deal with our interaction term modeling partial segregation. 

Without loss of generality, we suppose that $\Omega \supset B_3$, and we aim at proving the uniform H\"older bound in $B_1$. We know that 
\[
\sup_{i=1,2,3} \|u_{i,\beta}\|_{L^\infty(B_2)} \le C <+\infty
\]
independently on $\beta$. Let $\eta \in C^1_c(\R^N)$ be a radially decreasing cut-off function such that 
$\eta \equiv 1$ in $\overline{B_1}$, $\eta \equiv 0$ in $B_3 \setminus B_2$.

We fix $\alpha \in (0,\alpha_{3,N}/3)$, and aim at proving that the family $\{\eta \mf{u}_\beta\}_{\beta >1}$ admits a uniform bound on the $\alpha$-H\"older semi-norm, that is, there exists $C > 0$, independent of $\beta$, such that
\begin{equation}\label{lip scaling}
\sup_{i=1,2,3} \sup_{\substack{x \neq y \\ x,y \in B_3}}    \frac{ |(\eta u_{i,\beta})(x)-(\eta u_{i,\beta})(y)|}{|x-y|^{\alpha}} \leq C.
\end{equation}
Since $\eta\equiv 1$ in $\overline{B_1}$, once \eqref{lip scaling} is proved, Theorem \ref{thm: holder bounds} follows. 

Let us assume by contradiction that there exists a sequence $\beta_n \to +\infty$ and a corresponding sequence $\{\mf{u}_n:= \mf{u}_{\beta_n}\}$ such that
\[
    L_n := \sup_{i = 1, 2, 3} \sup_{\substack{x \neq y \\ x,y \in B_3}} \frac{ |(\eta u_{i,n})(x)-(\eta u_{i,n})(y)|}{|x-y|^{\alpha}} \to \infty \qquad \text{as $n \to +\infty$.}
\]
Since, for $\beta_n$ fixed, the functions $\mf{u}_{i,n}$ are smooth, we may assume that up to a relabelling the supremum is achieved for $i = 1$ and at a pair of points $x_n, y_n \in \overline{B_{2}}$, with $x_n \neq y_n$ . As $\{\mf{u}_n\}$ is uniformly bounded in $L^\infty(B_2)$, we have that
\[
|x_n-y_n|^{\alpha}= \frac{|(\eta u_{1,n})(x_n)-(\eta u_{1,n})(y_n)|}{L_n} \le \frac{C}{L_n} \to 0 
\]
as $n \to \infty$.

\subsection{Blow-up analysis}

The contradiction argument is based on two blow-up sequences:
   \[
    v_{i,n}(x) := \eta(x_n) \frac{u_{i,n}(x_n + r_n x)}{L_n r_n^{\alpha}} \quad \text{and} \quad \bar{v}_{i,n}(x) := \frac{(\eta u_{i,n})(x_n + r_n x)}{L_n r_n^{\alpha}},
\]
where
\[
r_n := |x_n -y_n| \to 0^+,
\]
both defined on the scaled domain 
\[
\frac{\Omega - x_n}{r_n} \supset \frac{B_3-x_n}{r_n}=: \Omega_n \supset B_{1/r_n},
\] 
which exhaust $\R^N$ as $\beta \to \infty$.

Now, the function $\bar{\mf{v}}_{n}$ is the one for which the H\"older quotient is normalized (see Lemma \ref{lem: basic prop}-(1) ahead), however it satisfies a rather complicated system. On the other hand, $\mf{v}_{n}$ satisfies a simple system related to \eqref{P beta}, but it may not be have bounded seminorm. We will also check that both blow-up functions have (locally) comparable $L^\infty$ norms and oscillations, and this allows to interchange information from one function to the other. This idea was firstly used in the context of singularly perturbed elliptic systems with full competition by K. Wang \cite{Wa} (see also \cite{STTZ16}). 


Basic properties of the blow-up sequences are collected in the following lemma.

\begin{lemma}\label{lem: basic prop}
In the previous setting, it results that:
\begin{itemize}
\item[($i$)] the sequence $\{\bar{\mf{v}}_n\}$ has uniformly bounded $\alpha$-H\"older semi-norm in $\Omega_n$, and in particular
\[
\sup_{i=1,2,3} \sup_{\substack{x \neq y \\ x,y \in \Omega_n}}    \frac{ |\bar v_{i,n}(x)-\bar v_{i,n}(y)|}{|x-y|^{\alpha}} = \frac{ |\bar v_{1,n}(0)-\bar v_{1,n}\left(\frac{y_n-x_n}{r_n}\right)|}{\left|\frac{y_n-x_n}{r_n}\right|^{\alpha}} = 1
\]
for every $n$.
\item[($ii$)] $v_{i,n}$ is a solution of 
\begin{equation}\label{system blow-up}
\Delta v_{i,n} = M_n v_{i,n} \prod_{j \neq i} v_{j,n}^{2}, \quad v_{i,n}>0 \qquad \text{in $\Omega_n$},
\end{equation}
where
\[
 M_n:= \beta_n r_n^{2+4\alpha} \left(\frac{L_n}{\eta(x_n)}\right)^{4}.
\] 
\item[($iii$)] for every open set $\omega \ssubset \R^N$ we have 
\[
\sup_K |\mf{v}_n - \bar{\mf{v}}_n | \to 0 \qquad \text{as $n \to \infty$}.
\]
\item[($iv$)] for every compact set $K \subset \R^N$, and for every $n$ so large that $\Omega_n \supset K$, we have
\[
|v_{i,n}(x) - v_{i,n}(y)| \le o_n(1) + |x-y|^\alpha \quad \text{for every $x, y \in K$},
\]
where $o_n(1) \to 0$ uniformly on $x,y \in K$; in particular $\{v_{i,n}\}$ has uniformly bounded oscillation in any compact set.
\item[($v$)] $\mf{v}_n$ is a minimizer of \eqref{system blow-up} with respect to variations with compact support, namely for every $\Omega' \ssubset \R^N$
\[
J_{M_n}(\mf{v}_n,\Omega') \le J_{M_n}(\mf{v}_n+ \bs{\varphi},\Omega') \quad \forall \bs{\varphi} \in H^1_0(\Omega', \R^3),
\]
for sufficiently large $n$, where $J_{M_n}(\mf{u}, \Omega)$ is defined as in \eqref{def functional}.
\end{itemize}
\end{lemma}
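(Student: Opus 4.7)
All five items are essentially scaling/normalization computations, so I would treat them in order, each obtained directly from the definitions of $\mf{v}_n$, $\bar{\mf{v}}_n$, $r_n$, $L_n$ and $M_n$.

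For (i), by construction $L_n$ is the supremum over $i$ and $x,y\in B_3$ of the $\alpha$-quotient of $\eta u_{i,n}$. Writing $y=x_n+r_n x'$, $z=x_n+r_n y'$ and substituting directly yields
\[
\frac{|\bar v_{i,n}(x')-\bar v_{i,n}(y')|}{|x'-y'|^{\alpha}}
=\frac{|(\eta u_{i,n})(x_n+r_n x')-(\eta u_{i,n})(x_n+r_n y')|}{L_n\,(r_n|x'-y'|)^{\alpha}}\le 1,
\]
with equality achieved, by the choice of $x_n,y_n,i=1$, at $x'=0$, $y'=(y_n-x_n)/r_n$. For (ii), the equation follows by differentiating $v_{i,n}(x)=\eta(x_n)u_{i,n}(x_n+r_n x)/(L_n r_n^{\alpha})$ twice, using $u_{i,n}(x_n+r_n x)=(L_n r_n^{\alpha}/\eta(x_n))v_{i,n}(x)$ inside $\Delta u_{i,n}=\beta_n u_{i,n}\prod_{j\neq i}u_{j,n}^2$; the prefactor $r_n^{2}\cdot\beta_n\cdot (L_n r_n^{\alpha}/\eta(x_n))^{4}$ is exactly $M_n$.

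For (iii) I would simply write
\[
\bar v_{i,n}(x)-v_{i,n}(x)=\frac{u_{i,n}(x_n+r_n x)\bigl[\eta(x_n+r_n x)-\eta(x_n)\bigr]}{L_n r_n^{\alpha}},
\]
and use $\|\nabla\eta\|_{\infty}<\infty$ plus the uniform $L^\infty$ bound (h1) on $u_{i,n}$ to get $|\bar v_{i,n}-v_{i,n}|\le C r_n^{1-\alpha}|x|/L_n$, which tends to $0$ uniformly on any compact set because $\alpha<1$, $r_n\to 0$ and $L_n\to\infty$. Property (iv) is then just the triangle inequality combining (i) and (iii); the uniform bound on the oscillation of $v_{i,n}$ on a compact $K$ follows by taking $o_n(1)+(\mathrm{diam}\,K)^{\alpha}$ as the bound.

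Finally, (v) is a change-of-variables argument in the energy. Fix $\Omega'\ssubset\R^N$; for large $n$, $x_n+r_n\Omega'\ssubset B_3\ssubset\Omega$, so any variation $\bs\varphi\in H^1_0(\Omega',\R^3)$ yields an admissible variation $\tilde{\bs\varphi}(y):=(L_n r_n^{\alpha}/\eta(x_n))\bs\varphi((y-x_n)/r_n)\in H^1_0(x_n+r_n\Omega',\R^3)$. A direct computation using the substitution $y=x_n+r_n x$ gives, with $A_n:=\eta(x_n)/(L_n r_n^{\alpha})$,
\[
J_{\beta_n}(\mf{u}_n,x_n+r_n\Omega')=\frac{r_n^{N-2}}{A_n^{2}}\,J_{M_n}(\mf{v}_n,\Omega'),
\]
and the same identity with $\mf{u}_n+\tilde{\bs\varphi}$, $\mf{v}_n+\bs\varphi$ on the right; the check that the coupling constant in the rescaled functional is $\beta_n r_n^{2}/A_n^{4}=M_n$ is the same as in (ii). Dividing through by the positive prefactor $r_n^{N-2}/A_n^2$ transfers the minimality from $\mf{u}_n$ to $\mf{v}_n$.

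\textbf{Expected obstacle.} There is no real obstacle in Lemma \ref{lem: basic prop}: it is a bookkeeping lemma. The only point that requires care is the matching of the rescaling exponents — that $\beta_n r_n^2 / A_n^4$ coincides with $M_n=\beta_n r_n^{2+4\alpha}(L_n/\eta(x_n))^4$ both in the PDE derivation of (ii) and in the energy scaling of (v). Once this is noticed, every assertion reduces to an elementary substitution, and the only analytic ingredients used are the Lipschitz continuity of $\eta$ and the uniform $L^\infty$ bound (h1).
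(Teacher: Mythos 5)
Your proof is correct and takes essentially the same approach as the paper: the paper dismisses (i), (ii), (v) as trivial bookkeeping, and proves (iii) via the identical computation $|\bar v_{i,n}(x)-v_{i,n}(x)|\le l M r_n^{1-\alpha}|x|/L_n$ and (iv) from (iii) plus the H\"older bound in (i). Your verification that $\beta_n r_n^{2+4\alpha}(L_n/\eta(x_n))^4$ is the correct coupling constant both for the rescaled PDE in (ii) and for the rescaled energy in (v), and that the common prefactor $r_n^{N-2}/A_n^2$ cancels in the minimality inequality, is exactly what the paper implicitly leaves to the reader.
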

\begin{proof}
The proof of points ($i$), ($ii$) and ($v$) is trivial. As far as ($iii$) is concerned, since $\eta$ is globally Lipschitz continuous with constant denoted by $l$, and $\{u_{i,n}\}$ is uniformly bounded in $K$, we have 
\[
|v_{i,n}(x) - \bar v_{i,n}(x)|= \frac{|u_{i,n}(x_n+r_n x)|}{L_n r_n^{\alpha}} |\eta(x_n)-\eta (x_n+r_n x)| \le \frac{l M r_n^{1-\alpha}}{L_n} |x|,
\]
where we recall that $\|u_{i,n}\|_{L^\infty(B_3)} \le M$ for every $i$ and $n$. Finally, for ($iv$) we use point ($iii$) and the uniform H\"older boundedness of the sequence $\{\bar{\mf{v}}_n\}$.
\end{proof}

In the next statement we collect some useful properties which take place when one component is bounded at $0$.

\begin{lemma}\label{lem: if i bdd}
Suppose that $\{v_{i,n}(0)\}$ is bounded. Then there exists a function $v_i \in C^{0}(\R^N) \cap H^1_{\loc}(\R^N)$, globally $\alpha$-H\"older continuous, such that, up to a subsequence: 
\begin{itemize}
\item[($i$)] $v_{i,n}, \bar v_{i,n} \to v_i$ 
locally uniformly on $\R^N$; 
\item[($ii$)] $v_i$ is non-constant in $B_1$ if $i=1$;
\item[($iii$)] for every $\omega \ssubset \R^N$ compact there exists $C>0$ such that
\[
M_n \int_{\omega} \Big(\prod_{j \neq i} v_{j,n}^2 \Big) v_{i,n}\,dx \le C
\]
\item[($iv$)] $v_{i,n} \to v_i$ strongly in $H^1_{\loc}(\R^N)$.
\end{itemize}
\end{lemma}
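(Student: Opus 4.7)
The plan is to handle the four assertions in order, using at each step the bounded-oscillation property from Lemma \ref{lem: basic prop}($iv$) and the uniform closeness $\sup_K|\mf{v}_n-\bar{\mf{v}}_n|\to 0$ from Lemma \ref{lem: basic prop}($iii$), together with the PDE \eqref{system blow-up}. For ($i$), I would combine the assumed boundedness of $\{v_{i,n}(0)\}$ with the explicit oscillation estimate $|v_{i,n}(x)-v_{i,n}(y)|\le o_n(1)+|x-y|^\alpha$ on each compact set to obtain a uniform $L^\infty$ bound for $\{v_{i,n}\}$ on compacts together with equicontinuity; Arzel\`a--Ascoli, with a diagonal extraction, then produces a subsequence converging locally uniformly to a globally $\alpha$-H\"older continuous limit $v_i$, and Lemma \ref{lem: basic prop}($iii$) transfers the same limit to $\bar v_{i,n}$. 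For ($ii$), the normalization in Lemma \ref{lem: basic prop}($i$) reads $|\bar v_{1,n}(0)-\bar v_{1,n}(z_n)|=1$ with $z_n:=(y_n-x_n)/r_n\in\S^{N-1}$; extracting a further subsequence so that $z_n\to z_\infty\in\overline{B_1}$ and passing to the limit via ($i$) yields $|v_1(0)-v_1(z_\infty)|=1$, so $v_1$ is non-constant in $B_1$.

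For ($iii$), the key is to rewrite the integrand via \eqref{system blow-up} as $\Delta v_{i,n}$ and integrate by parts. Picking a cutoff $\phi\in C_c^\infty(\R^N)$ with $0\le\phi\le 1$, $\phi\equiv 1$ on $\omega$ and compactly supported in a slightly larger set, I would get
\[
M_n\int_\omega v_{i,n}\prod_{j\neq i}v_{j,n}^2\,dx \;\le\; \int \phi \, \Delta v_{i,n}\,dx \;=\; \int v_{i,n}\,\Delta\phi\,dx \;\le\; C,
\]
the final bound being supplied by the local $L^\infty$ bound on $\{v_{i,n}\}$ established in ($i$).

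The strong $H^1_{\loc}$-convergence ($iv$) would come in two steps. Testing \eqref{system blow-up} against $\phi v_{i,n}$ gives
\[
\int \phi|\nabla v_{i,n}|^2\,dx + M_n \int \phi v_{i,n}^2 \prod_{j\neq i}v_{j,n}^2\,dx \;=\; \tfrac12 \int v_{i,n}^2\Delta\phi\,dx,
\]
whose right-hand side is uniformly bounded by ($i$), whence $\{v_{i,n}\}$ is bounded in $H^1_{\loc}$ and a subsequence converges weakly to $v_i$. Then I would test \eqref{system blow-up} against $\phi(v_{i,n}-v_i)$ to obtain, after integrating by parts,
\[
\int \phi\,\nabla v_{i,n}\cdot\nabla(v_{i,n}-v_i)\,dx = -\int (v_{i,n}-v_i)\,\nabla\phi\cdot\nabla v_{i,n}\,dx \;-\; M_n \int \phi\, v_{i,n}(v_{i,n}-v_i)\prod_{j\neq i}v_{j,n}^2\,dx,
\]
whose right-hand side vanishes in the limit (the first piece by uniform convergence $v_{i,n}\to v_i$ from ($i$) together with the $L^2_{\loc}$-bound on $\nabla v_{i,n}$; the second by factoring out $\|v_{i,n}-v_i\|_{L^\infty(\supp\phi)}\to 0$ against the uniformly bounded quantity controlled in ($iii$)). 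Combining this with $\int\phi\,\nabla v_i\cdot\nabla(v_{i,n}-v_i)\,dx\to 0$, which follows from the weak $H^1_{\loc}$-convergence, gives $\int\phi|\nabla(v_{i,n}-v_i)|^2\,dx\to 0$, that is, the desired strong convergence.

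The only subtle point is the treatment of the coupling term $M_n\int\phi\,v_{i,n}(v_{i,n}-v_i)\prod_{j\neq i}v_{j,n}^2\,dx$ in ($iv$): a priori it involves the singular factor $M_n\to\infty$ against the concentrating product $\prod_{j\neq i}v_{j,n}^2$, and the whole reason it vanishes is that ($iii$) already controls $M_n v_{i,n}\prod_{j\neq i}v_{j,n}^2$ as a uniformly bounded $L^1_{\loc}$-quantity, after which the uniform smallness $\|v_{i,n}-v_i\|_\infty\to 0$ from ($i$) closes the estimate. This decoupling—achieved by proving ($iii$) before ($iv$)—is the pivotal structural step; the rest is a routine test-function computation.
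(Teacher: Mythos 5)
Your proposal is correct and follows essentially the same route as the paper: Ascolà--Arzelà plus the bounded-oscillation estimate for (i), the Hölder normalization for (ii), a cutoff test and integration by parts for (iii), and the two-stage test-function argument (first $v_{i,n}\varphi$ for weak $H^1_{\loc}$ compactness, then $(v_{i,n}-v_i)\varphi$ combined with the $L^1_{\loc}$ control from (iii) and the $L^\infty$ convergence for the strong limit) for (iv). The only cosmetic difference is your use of a single cutoff $\phi$ rather than the paper's $\varphi^2$; the structural idea, including the observation that (iii) is needed to decouple the $M_n$-singular term in (iv), is identical.
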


\begin{proof}
Point ($i)$ follows directly from Ascoli-Arzel\`a theorem and Lemma \ref{lem: basic prop}. Point ($ii$) is a consequence of the definition of $\{\bar{\mf{v}}_n\}$. Concerning point ($iii$), let us test the equation for $v_{i,n}$ with a cutoff function $\varphi \in C^\infty_c(\R^N)$. We obtain
\[
M_n \int_{\Omega_n} \Big(\prod_{j \neq i} v_{j,n}^2 \Big) v_{i,n} \varphi \,dx = \int_{\Omega_n} v_{i,n} \Delta \varphi \,dx,
\]
whence the desired estimate follows. Finally, for point ($iv$), let us us test the equation for $v_{i,n}$ with $v_{i,n} \varphi^2$, where $\varphi \in C^\infty_c(\R^N)$ is an arbitrary non-negative cutoff function. We obtain
\[
\int_{\Omega_n} \Big(|\nabla v_{i,n}|^2 \varphi^2 +2v_{i,n} \varphi \nabla v_{i,n}\cdot \nabla \varphi +M_n \varphi^2 \prod_{j=1}^3 v_{j,n}^2 \Big) dx = 0.
\]
This implies that
\[
\int_{\Omega_n} |\nabla v_{i,n}|^2 \varphi^2\,dx \le 4 \int_{\Omega_n}  v_{i,n}^2 |\nabla \varphi|^2\,dx,
\]
which entails the boundedness of $\{v_{i,n}\}$ in $H^1_{\loc}(\R^N)$, and hence up to a subsequence $v_{i,n} \weak v_i$ weakly in $H^1_{\loc}(\R^N)$. To show the strong convergence, we test the equation for $v_{i,n}$ with $(v_{i,n}-v_i) \varphi$, where $\varphi \in C^\infty_c(\R^N)$ is an arbitrary non-negative cutoff function, and observe that
\[
\int_{\Omega_n} \varphi \nabla v_{i,n} \cdot \nabla (v_{i,n}-v_i)  \, dx \le \|v_{i,n}-v_i\|_{L^\infty(\mathrm{supp}(\varphi))} \int_{\Omega_n} \Big( |\nabla v_{i,n} \cdot \nabla \varphi| + M_n \varphi \, v_{i,n} \prod_{j \neq i} v_{j,n}^2 \Big)dx.
\]
The previous points ensure that the right hand side tends to $0$ as $n \to \infty$. Therefore
\[
\lim_n \int_{\Omega_n} \varphi \left( |\nabla v_{i,n}|^2- |\nabla v_i|^2\right) dx =\lim_{n} \int_{\Omega_n} \varphi \nabla v_{i,n} \cdot \nabla (v_{i,n}-v_i)\,dx =0,
\]
and the proof is complete.
\end{proof}

The previous lemma shows that the boundedness of $\{v_{i,n}(0)\}$ entails several consequences. On the other hand, we do not know whether this property holds or not. If not, we will often use the following statement.

\begin{lemma}\label{lem: if i unb}
Let 
\[
\mf{w}_n(x):= \mf{v}_n(x)-\mf{v}_n(0), \qquad \bar{\mf{w}}_n(x):= \bar{\mf{v}}_n(x)-\bar{\mf{v}}_n(0).
\]
Then there exists a function $\mf{w}$, globally $\alpha$-H\"older continuous in $\R^N$, such that $\mf{w}_n, \bar{\mf{w}}_n \to \mf{w}$ locally uniformly in $\R^N$. Moreover, $w_1$ is non-constant.
\end{lemma}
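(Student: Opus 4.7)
The plan is a routine Ascoli–Arzel\`a and diagonal extraction argument applied to $\bar{\mf{w}}_n$, combined with the uniform closeness $|\mf{v}_n - \bar{\mf{v}}_n| \to 0$ provided by Lemma \ref{lem: basic prop}-($iii$), which will allow to transfer the convergence from $\bar{\mf{w}}_n$ to $\mf{w}_n$. Non-constancy of the first component will come directly from the exact normalization in Lemma \ref{lem: basic prop}-($i$), evaluated at the two extremizing points that define it.

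First, I would observe that by Lemma \ref{lem: basic prop}-($i$) the sequence $\bar{\mf{v}}_n$ has $\alpha$-H\"older semi-norm bounded by $1$ on its domain $\Omega_n$. Subtracting a constant preserves H\"older semi-norms, so $[\bar{\mf{w}}_n]_{C^{0,\alpha}(\Omega_n)} \le 1$ and $\bar{\mf{w}}_n(0) = 0$. Since the $\Omega_n$ invade $\R^N$, for every compact $K \subset \R^N$ the sequence $\{\bar{\mf{w}}_n\}$ is eventually defined on $K$ and uniformly bounded with uniformly bounded modulus of continuity. By Ascoli–Arzel\`a and a diagonal extraction along a compact exhaustion of $\R^N$, a subsequence (still denoted by $n$) converges locally uniformly to a continuous function $\mf{w}\colon \R^N \to \R^3$ that satisfies $\mf{w}(0) = 0$ and $[\mf{w}]_{C^{0,\alpha}(\R^N)} \le 1$.

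Next, to transfer the convergence to $\mf{w}_n$, I would write
\[
\mf{w}_n(x) - \bar{\mf{w}}_n(x) = \bigl(\mf{v}_n(x) - \bar{\mf{v}}_n(x)\bigr) - \bigl(\mf{v}_n(0) - \bar{\mf{v}}_n(0)\bigr).
\]
Lemma \ref{lem: basic prop}-($iii$), applied on any compact set $K$ containing $0$, shows that both differences on the right vanish uniformly as $n \to \infty$. Hence $\mf{w}_n - \bar{\mf{w}}_n \to 0$ locally uniformly on $\R^N$, and combining with the previous step, $\mf{w}_n \to \mf{w}$ locally uniformly as well.

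Finally, for the non-constancy of $w_1$: since $r_n = |x_n - y_n|$, the point $\xi_n := (y_n - x_n)/r_n$ lies on $S^{N-1}$, and the identity in Lemma \ref{lem: basic prop}-($i$) reads $|\bar v_{1,n}(\xi_n) - \bar v_{1,n}(0)| = |\xi_n|^\alpha = 1$, hence also $|\bar w_{1,n}(\xi_n) - \bar w_{1,n}(0)| = 1$. Extracting a further subsequence so that $\xi_n \to \xi_\infty \in S^{N-1}$, and using the local uniform convergence already established (the points $\xi_n$ remain in the compact set $\overline{B_1}$), we obtain $|w_1(\xi_\infty) - w_1(0)| = 1$, which prevents $w_1$ from being constant. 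No step here presents a real difficulty; the only point requiring a little care is to arrange, via a single diagonal extraction, that the convergence of $\bar{\mf{w}}_n$, of $\mf{w}_n$, and of $\xi_n$ all occur along the same subsequence.
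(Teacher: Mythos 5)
Your proof is correct and follows essentially the same route as the paper's: Ascoli--Arzel\`a on $\bar{\mf{w}}_n$ using the uniform $C^{0,\alpha}$ bound from Lemma \ref{lem: basic prop}-($i$), transfer to $\mf{w}_n$ via Lemma \ref{lem: basic prop}-($iii$), and non-constancy read off from the normalization at the extremizing points. The paper's proof is merely terser; your version spells out the same ideas in more detail.
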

\begin{proof}
The local uniform convergence $\bar{\mf{w}}_n \to \mf{w}$ follows directly by the Ascoli-Arzel\`a theorem and the uniform bound on the $C^{0,\alpha}$ seminorm of $\bar{\mf{w}}_n$. Moreover the $\alpha$-H\"older seminorm of $\mf{w}$ is equal to $1$, and $w_1$ is non-constant. The fact that also $\mf{w}_n$ converges to $\mf{w}$ is a consequence of Lemma \ref{lem: basic prop}-($iii$).
\end{proof}

In the next lemmas we focus on what happens if at least one component is unbounded. We will reach a contradiction in all the possible cases.

\begin{lemma}\label{lem: 1 a infty}
Suppose that up to a subsequence $v_{i,n}(0) \to +\infty$. Then for every $r>0$ there exists $C>0$ such that 
\[
M_n v_{i,n}(0) \int_{B_r} \prod_{j \neq i} v_{j,n}^2 \,dx \le C.
\] 
\end{lemma}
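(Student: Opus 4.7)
\medskip

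The plan is to reduce the claim to a uniform bound for $\int_{B_r}\Delta v_{i,n}\,dx$, exploiting subharmonicity of $v_{i,n}$ together with the oscillation bound from Lemma \ref{lem: basic prop}-($iv$). Indeed, by that lemma, for $x\in B_r$ and $n$ large enough so that $B_r\Subset\Omega_n$, one has
\[
|v_{i,n}(x)-v_{i,n}(0)|\le o_n(1)+r^{\alpha},
\]
and since $v_{i,n}(0)\to+\infty$, this forces $v_{i,n}(x)\ge v_{i,n}(0)/2$ on $B_r$ for all sufficiently large $n$. Together with \eqref{system blow-up}, this will give
\[
M_n v_{i,n}(0)\int_{B_r}\prod_{j\neq i}v_{j,n}^2\,dx
\;\le\; 2M_n\int_{B_r}v_{i,n}\prod_{j\neq i}v_{j,n}^2\,dx
\;=\; 2\int_{B_r}\Delta v_{i,n}\,dx,
\]
so the task reduces to bounding the rightmost integral independently of $n$.

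To control $\int_{B_r}\Delta v_{i,n}\,dx$, I would pick a cutoff $\varphi\in C_c^{\infty}(B_{2r})$ with $0\le\varphi\le 1$ and $\varphi\equiv 1$ on $B_r$, and observe, since $\Delta v_{i,n}\ge 0$, that
\[
\int_{B_r}\Delta v_{i,n}\,dx \;\le\; \int_{B_{2r}}\varphi\,\Delta v_{i,n}\,dx \;=\; \int_{B_{2r}} v_{i,n}\,\Delta\varphi\,dx,
\]
where the last equality is two integrations by parts (no boundary terms, as $\varphi$ and $\nabla\varphi$ vanish near $\partial B_{2r}$). The point is that $v_{i,n}$ itself cannot be bounded (by hypothesis $v_{i,n}(0)\to+\infty$), but the additive constant $v_{i,n}(0)$ can be removed because $\int_{B_{2r}}\Delta\varphi\,dx=0$: setting $\tilde v_{i,n}:=v_{i,n}-v_{i,n}(0)$, we get
\[
\int_{B_{2r}}v_{i,n}\,\Delta\varphi\,dx
\;=\; \int_{B_{2r}}\tilde v_{i,n}\,\Delta\varphi\,dx
\;\le\; \|\tilde v_{i,n}\|_{L^\infty(B_{2r})}\,\|\Delta\varphi\|_{L^1(B_{2r})}.
\]
By Lemma \ref{lem: basic prop}-($iv$) the quantity $\|\tilde v_{i,n}\|_{L^\infty(B_{2r})}$ is bounded uniformly in $n$ by a constant depending only on $r$, and $\|\Delta\varphi\|_{L^1}$ depends only on the choice of $\varphi$; this yields the required uniform bound on $\int_{B_r}\Delta v_{i,n}\,dx$ and hence concludes the argument.

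I do not anticipate any genuine obstacle: the proof is essentially a one-line reduction plus a standard cutoff test. The only mildly delicate point, worth highlighting, is the idea of subtracting the (possibly diverging) constant $v_{i,n}(0)$ before testing against $\Delta\varphi$, a step that is legitimate precisely because $\int\Delta\varphi\,dx=0$ and that is what allows the oscillation bound of Lemma \ref{lem: basic prop}-($iv$) to replace any direct $L^\infty$ control on $v_{i,n}$.
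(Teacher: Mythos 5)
Your proof is correct, but it takes a genuinely different and in fact more elementary route than the paper's. The paper introduces Almgren-type quantities
\[
H_{i,n}(r)=\frac{1}{r^{N-1}}\int_{S_r}v_{i,n}^2\,d\sigma,\qquad E_{i,n}(r)=\frac{1}{r^{N-2}}\int_{B_r}\Big(|\nabla v_{i,n}|^2+M_n\prod_{j}v_{j,n}^2\Big)dx,
\]
derives the identity $H_{i,n}'(r)=\tfrac{2}{r}E_{i,n}(r)$ from a Rellich-type integration by parts, integrates it over $[r/2,r]$, and then bounds the two sides of
\[
H_{i,n}(r)-H_{i,n}(r/2)=\int_{r/2}^{r}\frac{2}{s}E_{i,n}(s)\,ds
\]
from above and below respectively. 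The upper bound for the left-hand side comes from the oscillation estimate plus the factorization $a^2-b^2=(a-b)(a+b)$ (which is where the linear rather than quadratic dependence on $v_{i,n}(0)$ appears), while the lower bound for the right-hand side isolates $M_n v_{i,n}^2(0)\int_{B_{r/2}}\prod_{j\neq i}v_{j,n}^2$ using $v_{i,n}/v_{i,n}(0)\to1$ uniformly. Comparing the two sides and cancelling one power of $v_{i,n}(0)$ yields the claim. You instead reduce directly to bounding $\int_{B_r}\Delta v_{i,n}\,dx$ (the uniform lower bound $v_{i,n}\ge v_{i,n}(0)/2$ on $B_r$ substitutes for the paper's cancellation of $v_{i,n}(0)$), test with a cutoff, and, crucially, exploit $\int\Delta\varphi=0$ to subtract the unbounded constant $v_{i,n}(0)$ before applying the oscillation bound. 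This last observation is the key idea that makes your shortcut work; it is precisely the step that distinguishes your situation from Lemma \ref{lem: if i bdd}-($iii$), where the same cutoff test is used but $v_{i,n}$ itself is locally bounded. Both arguments hinge on the uniform oscillation control from Lemma \ref{lem: basic prop}-($iv$); yours avoids the spherical-average machinery entirely and is cleaner.

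One small technical remark: the two integrations by parts $\int\varphi\,\Delta v_{i,n}=\int v_{i,n}\,\Delta\varphi$ are legitimate because, at fixed $n$, the original solutions $u_{i,\beta_n}$ (and hence $v_{i,n}$) are smooth by elliptic bootstrap, as the paper points out early in Section \ref{sec: hold bounds}; this is worth saying explicitly, but it poses no issue.
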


\begin{proof}
We define
\[
\begin{split}
H_{i,n}(r) &:= \frac1{r^{N-1}} \int_{S_r} v_{i,n}^2\,d\sigma, \\
E_{i,n}(r) & := \frac1{r^{N-2}} \int_{S_r} \Big( |\nabla v_{i,n}|^2 + M_n \prod_{j=1}^3 v_{j,n}^2\Big)\,dx
\end{split}
\] 
By multiplying the equation of $v_{i,n}$ by $v_{i,n}$, and integrating on $B_r$, we see that
\[
\int_{B_r} \Big(|\nabla v_{i,n}|^2 + M_n \prod_{j=1}^3 v_{j,n}^2\Big)dx = \int_{S_r} v_{i,n} \pa_\nu v_{i,n}\,d\sigma,
\]
whence
\[
H_{i,n}'(r) = \frac2{r} E_{i,n}(r).
\] 
Therefore, integrating from $r/2$ to $r$ we obtain
\beq\label{H_i' e E_i}
H_{i,n}(r) - H_{i,n}\left(\frac{r}{2}\right) = \int_{r/2}^r \frac2{s} E_{i,n}(s)\,ds.
\eeq
Now, the left hand side can be estimated from above thanks to Lemma \ref{lem: basic prop} - properties ($i$) and ($iv$):
\[
H_{i,n}(r) - H_{i,n}\left(\frac{r}{2}\right) = \int_{S_1} \left( v_{i,n}^2(ry)- v_{i,n}^2 \left(\frac{r}{2}y\right)\right)dy \le C(r) \left( v_{i,n}(0)+1\right) \le C_1(r) v_{i,n}(0).
\] 
Moreover, the right hand side of \eqref{H_i' e E_i} can be estimated from below as follows:
\[
\begin{split}
\int_{r/2}^r \frac2{s} E_{i,n}(s)\,ds & \ge \frac{r}{2} \min_{s \in \left[\frac{r}{2},r\right]} \frac{2}{s} E_{i,n}(s) \\
& \ge r M_n \min_{s \in \left[\frac{r}{2},r\right]} \frac{1}{s^{N-1}} \int_{B_s} \prod_{j=1}^3 v_{j,n}^2\,dx\\
& \ge C_2(r) M_n v_{i,n}^2(0) \int_{B_{r/2}} \frac{v_{i,n}^2}{v_{i,n}^2(0)} \prod_{j \neq i} v_{j,n}^2\,dx.
\end{split}
\]
Since $v_{i,n}(0) \to +\infty$ and the oscillation of $v_{i,n}$ is locally uniformly bounded, we have that $v_{i,n}/v_{i,n}(0) \to 1$ uniformly on $B_{r/2}$ as $n \to \infty$. Hence, coming back to \eqref{H_i' e E_i}, we finally infer that 
\[
M_n v_{i,n}^2(0) \int_{B_{r/2}} \prod_{j \neq i} v_{j,n}^2\,dx \le C(r) v_{i,n}(0),
\]
whence the thesis follows.
\end{proof}

\begin{lemma}\label{lem: 2 a infty}
Suppose that up to a subsequence $v_{i_1,n}(0), v_{i_2,n}(0) \to +\infty$ for a couple of indexes $i_1, i_2 \in \{1,2,3\}$. Then 
\[
M_n v_{i_1,n}^2(0) \, v_{i_2,n}^2(0) \le C
\]
for a positive constant $C>0$ independent of $n$. 
\end{lemma}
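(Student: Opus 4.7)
My plan is to adapt the proof of Lemma \ref{lem: 1 a infty} to the setting where \emph{two} components, rather than one, blow up at the origin. The idea is to integrate the same fundamental spherical identity
\[
H_{i_1,n}'(r) = \frac{2}{r}\, E_{i_1,n}(r)
\]
between $r/2$ and $r$, estimate the two sides as before, and crucially pull an extra factor of $v_{i_2,n}^2(0)$ out of the interaction integrand on the right-hand side using the improved hypothesis.

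For the left-hand side I reproduce verbatim the bound from Lemma \ref{lem: 1 a infty}: exploiting Lemma \ref{lem: basic prop}-(i, iv) together with $v_{i_1,n}(0)\to +\infty$, one gets $H_{i_1,n}(r) - H_{i_1,n}(r/2) \le C_1(r)\,v_{i_1,n}(0)$ for every $n$ large enough. For the right-hand side I again discard the Dirichlet contribution and retain only the potential term $M_n \int_{B_r} v_{i_1,n}^2 v_{i_2,n}^2 v_{i_3,n}^2\,dx$, but this time the bounded-oscillation property of Lemma \ref{lem: basic prop}-(iv), applied to \emph{both} $i_1$ and $i_2$, ensures that $v_{i_1,n}/v_{i_1,n}(0)\to 1$ and $v_{i_2,n}/v_{i_2,n}(0)\to 1$ uniformly on $B_{r/2}$. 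Hence I can extract both square factors simultaneously and obtain
\[
\int_{r/2}^r \frac{2}{s}\, E_{i_1,n}(s)\,ds \ \ge\ C_2(r)\, M_n\, v_{i_1,n}^2(0)\, v_{i_2,n}^2(0) \int_{B_{r/2}} v_{i_3,n}^2\,dx.
\]
Comparing with the LHS bound and cancelling one factor of $v_{i_1,n}(0)$ yields the intermediate estimate
\[
M_n\, v_{i_1,n}(0)\, v_{i_2,n}^2(0) \int_{B_{r/2}} v_{i_3,n}^2\,dx \ \le\ C.
\]

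The remaining step -- and the main obstacle -- is to close the argument by trading the $\int_{B_{r/2}} v_{i_3,n}^2\,dx$ factor for a further factor of $v_{i_1,n}(0)$. Since $v_{i_3,n}$ is nonnegative and subharmonic, $v_{i_3,n}^2$ is subharmonic too, so the mean-value inequality gives $\int_{B_{r/2}} v_{i_3,n}^2\,dx \ge |B_{r/2}|\,v_{i_3,n}^2(0)$, which immediately finishes the proof in the case that $v_{i_3,n}(0)$ is bounded below. In the remaining case, $v_{i_3,n}(0)\to 0$, I plan to combine the intermediate bound with the exponential decay estimate of Lemma \ref{lem: decay} applied to $v_{i_3,n}$: indeed on $B_r$ the equation yields $\Delta v_{i_3,n} \ge c\, M_n\, v_{i_1,n}^2(0)\, v_{i_2,n}^2(0)\, v_{i_3,n}$, so if $M_n v_{i_1,n}^2(0) v_{i_2,n}^2(0)$ were unbounded, $v_{i_3,n}$ would be exponentially small on $B_{r/4}$ relative to its $L^\infty$-norm on $S_{r/2}$, and balancing this exponential smallness against the intermediate integral estimate (and the symmetric one obtained by swapping $i_1\leftrightarrow i_2$) forces the stated boundedness. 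The delicate point I anticipate is matching the constants in this balance cleanly, since the role of $v_{i_3,n}$ -- which a priori need not blow up nor stay bounded below -- has to be handled by a dichotomy argument.
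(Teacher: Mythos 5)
Your intermediate estimate $M_n v_{i_1,n}(0)\, v_{i_2,n}^2(0) \int_{B_{r/2}} v_{i_3,n}^2\,dx \le C$ is correct, but the strategy built on it cannot close. To extract the desired bound $M_n v_{i_1,n}^2(0)\, v_{i_2,n}^2(0) \le C$ you would need $\int_{B_{r/2}} v_{i_3,n}^2\,dx \gtrsim v_{i_1,n}(0) \to +\infty$, and this is impossible: the third component is the one that does \emph{not} blow up. In fact the paper shows (Step~1 of the proof of Lemma~\ref{lem: 2 a infty}) that $\{v_{\underline{i},n}\}$ is locally uniformly bounded, and then that it decays exponentially, so the integral factor tends to $0$. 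Your dichotomy does not repair this. In the case $v_{i_3,n}(0) \ge c > 0$ you claim the proof is finished, but the subharmonic mean-value bound only yields $M_n v_{i_1,n}(0)\, v_{i_2,n}^2(0) \le C'$, which is one power of $v_{i_1,n}(0)$ short of the target, hence useless since $v_{i_1,n}(0) \to +\infty$. In the case $v_{i_3,n}(0) \to 0$ the ``balancing'' idea is circular: if $I_n := M_n v_{i_1,n}^2(0)\, v_{i_2,n}^2(0)\to+\infty$, Lemma~\ref{lem: decay} gives $\sup_{B_{r/4}} v_{i_3,n} \lesssim e^{-c\sqrt{I_n}}$, so $\int v_{i_3,n}^2 \lesssim e^{-2c\sqrt{I_n}}$; plugged into the intermediate estimate this leaves $I_n\, v_{i_1,n}(0)^{-1}\, e^{-2c\sqrt{I_n}} \le C$, which is \emph{trivially} satisfied for any growth of $I_n$ and yields no contradiction. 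The exponential decay that you hoped to ``balance against'' the integral estimate in fact renders that estimate vacuous in exactly the regime you need it.

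The paper's proof does not rely on the $H$--$E$ identity of Lemma~\ref{lem: 1 a infty} at all; it proceeds by a direct contradiction argument of a quite different nature. Assuming $I_n\to+\infty$: (Step~1) test the equation for $v_{\underline{i},n}$ with $v_{\underline{i},n}\varphi^2$ to deduce that $\{v_{\underline{i},n}\}$ is locally uniformly bounded — this is the piece your argument never establishes; (Step~2) apply Lemma~\ref{lem: decay} to conclude $v_{\underline{i},n}\to 0$ locally uniformly with exponential rate, which in particular rules out $\underline{i}=1$ and, after relabelling, gives $|\Delta v_{1,n}| \le C_R I_n e^{-C_R\sqrt{I_n}}\to 0$ on compact sets. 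Then $w_{1,n}:=v_{1,n}-v_{1,n}(0)$ converges (Lemma~\ref{lem: if i unb}) to a globally H\"older continuous, non-constant, harmonic function — a contradiction with the classical Liouville theorem. The crucial insight you are missing is that the exponential decay of the third component should be fed into the \emph{equation} for $v_{1,n}$, to conclude that $w_1$ is harmonic, rather than fed back into an integral inequality, where it kills all information.
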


\begin{proof}
We argue by contradiction, and suppose that up to a subsequence $M_n v_{i_1,n}^2(0) v_{i_2,n}^2(0) \to +\infty$ as $n \to \infty$. 

\smallskip

\emph{Step 1)} Let $\underline{i} \in \{1,2,3\} \setminus \{i_1,i_2\}$. For any $R>1$, we claim that $\|v_{\underline{i},n}\|_{L^\infty(B_{R})} \le C$ for a positive constant $C>0$ depending only on $R$. 

\smallskip

First of all, since $v_{j,n}$ has locally bounded oscillation for each $i$ (Lemma \ref{lem: basic prop}), we have that $v_{j,n} \to +\infty$ locally uniformly for $j=i_1,i_2$. Moreover, there exists $C_R>0$ depending on $R$ but not on $n$ such that
\beq\label{def I_n}
I_n := M_n \inf_{B_{R}}v_{i_1,n}^2 \, v_{i_2,n}^2 \ge M_n \left(v_{i_1,n}(0)- C_R\right)^2 \left(v_{i_2,n}(0)- C_R\right)^2 \to +\infty.
\eeq
Let us test the equation of $v_{\underline{i},n}$ with $v_{\underline{i},n} \varphi^2$, with $\varphi \in C^\infty_c(\R^N)$ such that $0 \le \varphi \le 1$, $\varphi \equiv 1$ in $B_R$, and $\supp(\varphi) \subset B_{2R}$: we obtain 
\[
\int_{\Omega_n} \Big(|\nabla v_{i,n}|^2 \varphi^2 +2v_{i,n} \varphi \nabla v_{i,n}\cdot \nabla \varphi +M_n \varphi^2 \prod_{j=1}^3 v_{j,n}^2 \Big) dx = 0,
\]
whence by standard computations
\beq\label{stima prod}
\begin{split}
C_R \, I_n \inf_{B_R} v_{\underline{i},n}^2 & \le M_n \int_{B_R} \prod_{j=1}^3 v_{j,n}^2 \,dx  \le \int_{\Omega_n} \Big(\frac12 |\nabla v_{\underline{i},n}|^2 \varphi^2 +M_n \varphi^2 \prod_{j=1}^3 v_{j,n}^2 \Big) dx\\
& \le 2\int_{\Omega_n} v_{\underline{i},n}^2 |\nabla \varphi|^2\,dx
 \le C_R \sup_{B_{2R}} v_{\underline{i},n}^2,
\end{split}
\eeq
for some $C_R>0$. We also observe that $\inf_{B_R} v_{\underline{i},n} \ge \sup_{B_{2R}} v_{\underline{i},n} - C_R$, with $C_R>0$ (once again, we used the fact that $\mf{v}_n$ has locally bounded oscillation). Therefore, \eqref{stima prod} yields
\[
I_n \sup_{B_{2R}} v_{\underline{i},n}^2 \le C_R \Big(\sup_{B_{2R}} v_{\underline{i},n}^2 +  I_n \sup_{B_{2R}} v_{\underline{i},n}\Big),
\]
which, in view of \eqref{def I_n}, is possible only if $\sup_{B_{2R}} v_{\underline{i},n}^2$ is bounded, as claimed. 

\smallskip

\emph{Step 2)} Conclusion of the proof. 

\smallskip

By definition of $I_n$, we have that 
\[
-\Delta v_{\underline{i},n} \le - I_n v_{\underline{i},n} 
\quad \text{in $B_{R}$}.
\]
By Lemma \ref{lem: decay}, Step 1, and recalling that $I_n \to +\infty$, we infer that
\[
\sup_{B_{R/2}} v_{\underline{i},n} \le C \sup_{B_{R}} v_{\underline{i},n} e^{- \frac{R\sqrt{I_n}}{2}} \to 0 
\]
as $n \to \infty$. Since $R>1$ was arbitrarily chosen, $v_{\underline{i},n} \to 0$ locally uniformly in the whole space $\R^N$. Together with Lemma \ref{lem: if i bdd}, this rules out the case $\underline{i}=1$. Thus, without loss of generality, we can suppose that $\underline{i}=3$, $i_1=1$, $i_2=2$. 

Let now $R>1$, and $\bar x_n \in \overline{B_R}$ such that
\[
M_n v_{1,n}^2 v_{2,n}^2\,(\bar x_n) = I_n,
\]
where $I_n$ is defined in \eqref{def I_n}. Since $\{\mf{v}_n\}$ has locally bounded oscillation, while $v_{1,n}, v_{2,n} \to +\infty$ locally uniformly, we have that
\[
\begin{split}
M_n \big( v_{1,n} v_{2,n}(x)\big)^2 &\le (v_{1,n}(\bar x_n)+C_R)^2(v_{2,n}(\bar x_n)+C_R)^2 \le 2 M_n \big( v_{1,n} v_{2,n}(\bar x_n)\big)^2 = 2I_n
\end{split}
\]
for every $x \in B_R$. 
Using once again that $v_{1,n} \to +\infty$ locally uniformly, while $\{v_{3,n}\}$ is locally bounded, we deduce that there exists $C_R>0$ such that
\[
\begin{split}
|\Delta v_{1,n}(x)| & \le  M_n v_{1,n}^2(x) \prod_{j \neq 1} v_{j,n}^2(x)  \le M_n \big( v_{1,n} v_{2,n}(x)\big)^2 C_R e^{-C_R\sqrt{I_n}}   \le  C_R  I_n  e^{-C_R\sqrt{I_n}} \quad \forall x \in B_R.
\end{split}
\]
If we consider $w_{1,n}:= v_{1,n}-v_{1,n}(0)$, as in Lemma \ref{lem: if i unb}, this implies that $w_{1,n} \to w_1$ locally uniformly in $\R^N$, with $w_1$ globally H\"older continuous, non-constant, and harmonic. Indeed, $|\Delta w_{1,n}| = |\Delta v_{1,n}| \to 0$ locally uniformly in $\R^N$. By the classical Liouville theorem for harmonic functions, this is the desired contradiction.
\end{proof}

\begin{lemma}\label{lem: all infty}
It is not possible that $v_{i,n}(0) \to +\infty$ for every $i=1,2,3$.
\end{lemma}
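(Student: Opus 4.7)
The plan is to argue by contradiction. Suppose that $v_{i,n}(0) \to +\infty$ for every $i = 1,2,3$, and derive a contradiction with the non-constancy of $w_1$ provided by Lemma \ref{lem: if i unb}.

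\textbf{Step 1: a pointwise bound on the full product.} Since each $v_{i,n}(0) \to +\infty$, I would apply Lemma \ref{lem: 1 a infty} for each choice of $i \in \{1,2,3\}$. Combined with Lemma \ref{lem: basic prop}-($iv$) (locally bounded oscillation), for any fixed $r > 0$ and $n$ large we have $v_{\ell,n}(x) \ge \tfrac{1}{2} v_{\ell,n}(0)$ on $B_r$, which gives
\[
\int_{B_r} v_{j,n}^2 v_{k,n}^2 \, dx \ge \frac{|B_r|}{16}\, v_{j,n}^2(0)\, v_{k,n}^2(0).
\]
Plugging this into the inequality of Lemma \ref{lem: 1 a infty} yields, for every permutation $(i,j,k)$ of $(1,2,3)$,
\[
M_n\, v_{i,n}(0)\, v_{j,n}^2(0)\, v_{k,n}^2(0) \le C_r.
\]

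\textbf{Step 2: identification of the limit equation.} Let $w_{i,n} := v_{i,n} - v_{i,n}(0)$ as in Lemma \ref{lem: if i unb}, so that $w_{i,n} \to w_i$ locally uniformly with $w_i$ globally $\alpha$-H\"older continuous on $\R^N$ and $w_1$ non-constant. Writing $\Delta w_{i,n} = \Delta v_{i,n}$ and factoring out the values at $0$,
\[
\Delta w_{i,n}(x) = M_n v_{i,n}(0) v_{j,n}^2(0) v_{k,n}^2(0) \Bigl(1 + \tfrac{w_{i,n}(x)}{v_{i,n}(0)}\Bigr) \Bigl(1 + \tfrac{w_{j,n}(x)}{v_{j,n}(0)}\Bigr)^{\!2} \Bigl(1 + \tfrac{w_{k,n}(x)}{v_{k,n}(0)}\Bigr)^{\!2}.
\]
Since the $w_{\ell,n}$ are locally uniformly bounded while $v_{\ell,n}(0) \to +\infty$, the product of the three parenthetical factors converges to $1$ locally uniformly on $\R^N$. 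By Step 1 the scalar prefactor is bounded, so, up to a further subsequence, it converges to some $\ell_i \ge 0$, and consequently $\Delta w_{i,n} \to \ell_i$ locally uniformly on $\R^N$.

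\textbf{Step 3: Liouville contradiction.} Standard elliptic regularity (using that $\Delta w_{1,n}$ is locally uniformly bounded and $w_{1,n}$ is equi-H\"older) upgrades the convergence $w_{1,n} \to w_1$ to $C^{1,\beta}_{\loc}$ for any $\beta \in (0,1)$, and passage to the limit yields $\Delta w_1 = \ell_1$ classically on $\R^N$. Since $w_1$ is globally $\alpha$-H\"older continuous on $\R^N$, Proposition \ref{prop: Liou eq}-($ii$) forces $w_1$ to be constant, contradicting Lemma \ref{lem: if i unb}.

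The main obstacle is securing the sharper bound in Step 1 on the \emph{triple} weighted product $M_n v_{i,n}(0) v_{j,n}^2(0) v_{k,n}^2(0)$. Lemma \ref{lem: 2 a infty} alone would only bound $M_n v_{i,n}^2(0) v_{j,n}^2(0)$, which is insufficient to turn the right-hand side of the equation into a single scalar asymptotically constant on compact sets. It is precisely the simultaneous divergence of all three components, combined with the oscillation control of Lemma \ref{lem: basic prop}, that allows the integral inequality of Lemma \ref{lem: 1 a infty} to be converted into the needed pointwise product bound.
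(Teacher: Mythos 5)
Your proof is correct and follows essentially the same route as the paper: both bound $M_n v_{1,n}(0)\, v_{2,n}^2(0)\, v_{3,n}^2(0)$ via Lemma \ref{lem: 1 a infty} and the locally bounded oscillation, deduce that $\Delta w_{1,n}\to\lambda$ locally uniformly, and conclude via Lemma \ref{lem: if i unb} and Proposition \ref{prop: Liou eq}-($ii$). The only cosmetic differences are that you invoke Lemma \ref{lem: 1 a infty} for all three indices (only $i=1$ is needed) and you spell out the elliptic regularity step that the paper leaves implicit.
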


\begin{proof}
By contradiction, we suppose that $v_{i,n}(0) \to +\infty$ for every $i$. 
Then $v_{i,n}(z) \to +\infty$ for every $z \in \R^N$ as well; more precisely, since $v_{j,n}$ has locally bounded oscillation, we have that
\beq\label{760}
\frac{v_{j,n}}{v_{j,n}(0)} \to 1 \quad \text{uniformly on any compact set}.
\eeq
Thus, for any $r>0$, we have that
\[
\int_{B_r} \prod_{j \neq 1} \left(\frac{v_{j,n}}{v_{j,n}(0)}\right)^2 dx \to 1
\]
as $n \to \infty$, and by Lemma \ref{lem: 1 a infty} 
\beq\label{26824}
\begin{split}
M_n v_{1,n}(0) \prod_{j \neq 1} v_{j,n}^2(0) & \le 2 M_n v_{1,n}(0) \prod_{j \neq 1} v_{j,n}^2(0) \int_{B_r} \prod_{j \neq 1} \left(\frac{v_{j,n}}{v_{j,n}(0)}\right)^2 dx \\
&= 2 M_n v_{1,n}(0) \int_{B_r} \prod_{j \neq 1} v_{j,n}^2\,dx \le C,
\end{split}
\eeq
with $C$ independent of $n$; namely, the right hand side in the equation of $v_{1,n}$ is bounded at $0$. Actually, thanks to \eqref{760} and \eqref{26824}, we deduce that 
\[
\begin{split}
\big| M_n v_{1,n}(z) v_{2,n}^2(z) &v_{3,n}^2(z)  - M_n v_{1,n}(0) v_{2,n}^2(0) v_{3,n}^2(0) \big| \\
& = M_n v_{1,n}(0) v_{2,n}^2(0) v_{3,n}^2(0) \left| 1- \frac{v_{1,n}(z)}{v_{1,n}(0)} \frac{v_{2,n}^2(z)}{v_{2,n}^2(0)} \frac{v_{3,n}^2(z)}{v_{3,n}^2(0)} \right| \to 0
\end{split}
\]
as $n \to \infty$, uniformly in $z \in B_r$, for every $r>0$. Therefore, there exists a constant $\lambda \ge 0$ such that, up to a subsequence, 
\[
\Delta v_{1,n} \to \lambda \quad \text{locally uniformly in $\R^N$},
\]
By Lemma \ref{lem: if i unb}, $w_{1,n}(x) := v_{1,n}(x)-v_{1,n}(0)$ converges locally uniformly to a limit function $w_1$, non-constant and globally H\"older continuous, which moreover satisfies 
\[
\Delta w_1 = \lambda \quad \text{in $\R^N$}.
\]
By Proposition \ref{prop: Liou eq}, this is the desired contradiction. 
\end{proof}

\begin{lemma}\label{lem: 2 infty}
It is not possible that exactly two components $v_{i_1,n}(0), v_{i_2,n}(0) \to +\infty$.
\end{lemma}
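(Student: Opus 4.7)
The plan is to derive a contradiction in each of the two subcases determined by whether the lone bounded component is indexed by $1$ or not. Write $\{i_1,i_2\}$ for the pair with $v_{i_\ell,n}(0)\to+\infty$ and $i_3\in\{1,2,3\}\setminus\{i_1,i_2\}$ for the bounded one. The starting observation is that Lemma \ref{lem: 2 a infty} already supplies the key bound $M_n v_{i_1,n}^2(0)\,v_{i_2,n}^2(0)\le C$, and that, since the oscillations of $\mf{v}_n$ are locally uniformly bounded by Lemma \ref{lem: basic prop}($iv$), one has $v_{i_\ell,n}/v_{i_\ell,n}(0)\to 1$ uniformly on any compact set for $\ell=1,2$.

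\emph{Case 1: $i_3=1$.} By Lemma \ref{lem: if i bdd}, a subsequence of $v_{1,n}$ converges locally uniformly to a non-constant, globally $\alpha$-H\"older function $v_1$. I would rewrite the equation for $v_{1,n}$ as
\[
\Delta v_{1,n} \;=\; \bigl(M_n v_{i_1,n}^2(0)\,v_{i_2,n}^2(0)\bigr)\cdot v_{1,n}\cdot\prod_{\ell=1,2}\frac{v_{i_\ell,n}^2}{v_{i_\ell,n}^2(0)},
\]
and, after passing to a further subsequence so that $M_n v_{i_1,n}^2(0)v_{i_2,n}^2(0)\to\lambda\ge 0$, let $n\to\infty$ to obtain $\Delta v_1=\lambda v_1$ on $\R^N$. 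If $\lambda>0$, Proposition \ref{prop: Liou eq}($iii$) forces $v_1\equiv 0$; if $\lambda=0$, part ($i$) forces $v_1$ constant. Either way the non-constancy of $v_1$ is contradicted.

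\emph{Case 2: $1\in\{i_1,i_2\}$}, say $i_1=1$. Now $v_{1,n}(0)\to+\infty$, so I would instead study the recentered blow-up $w_{1,n}:=v_{1,n}-v_{1,n}(0)$, which by Lemma \ref{lem: if i unb} converges locally uniformly to a non-constant, globally $\alpha$-H\"older function $w_1$. Rewriting
\[
\Delta w_{1,n} \;=\; \Delta v_{1,n} \;=\; \frac{M_n v_{1,n}^2(0)\,v_{i_2,n}^2(0)}{v_{1,n}(0)}\cdot\frac{v_{1,n}}{v_{1,n}(0)}\cdot\frac{v_{i_2,n}^2}{v_{i_2,n}^2(0)}\cdot v_{i_3,n}^2,
\]
the first factor tends to $0$ because the numerator is bounded by Lemma \ref{lem: 2 a infty} while $v_{1,n}(0)\to+\infty$, and the remaining factors are locally uniformly bounded (with $v_{1,n}/v_{1,n}(0)\to 1$, $v_{i_2,n}^2/v_{i_2,n}^2(0)\to 1$, and $v_{i_3,n}$ locally bounded since $v_{i_3,n}(0)$ is bounded and the oscillations are controlled). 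Hence $\Delta w_{1,n}\to 0$ locally uniformly, so $w_1$ is harmonic and globally $\alpha$-H\"older on $\R^N$; Proposition \ref{prop: Liou eq}($i$) then forces $w_1$ to be constant, a contradiction.

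The main technical point is to choose, in each case, the correct algebraic rearrangement of the cubic interaction term, so that the bounded quantity controlled by Lemma \ref{lem: 2 a infty} is isolated as a single prefactor with identifiable asymptotics. Once this is done, the passage to the limit in the equation and the application of Proposition \ref{prop: Liou eq} are routine; the only subtlety is remembering to pass to the un-shifted limit in Case 1 (where Liouville for $\Delta u=\lambda u$ applies) and to the shifted limit in Case 2 (where only Liouville for harmonic functions is available, because the shifted sequence loses the non-negativity needed for the exponential decay argument).
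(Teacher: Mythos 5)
Your argument is correct and follows essentially the same strategy as the paper's: combine Lemma \ref{lem: 2 a infty} with the locally bounded oscillation of $\mf{v}_n$ to get $M_n v_{i_1,n}^2 v_{i_2,n}^2 \to \lambda$ locally uniformly, pass to the limit in the relevant equation, and apply Proposition \ref{prop: Liou eq}. Your explicit two-case split and the factorization in Case 2 give a slightly cleaner presentation of the paper's argument (the paper instead first deduces $\Delta v_{i_3}=\lambda v_{i_3}$ for the bounded index, concludes $v_{i_3}\equiv 0$, and then estimates $\Delta v_{1,n}\le C v_{3,n}^2$), and they have the small merit of not needing $v_{i_3}\equiv 0$ at all -- a fact Proposition \ref{prop: Liou eq}(iii) strictly yields only when $\lambda>0$, whereas your Case 2 uses only local boundedness of $v_{i_3,n}$.
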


\begin{proof}
In case $v_{i_1,n}(0), v_{i_2,n}(0) \to +\infty$ for a couple of indexes $i_1,i_2 \in \{1,2,3\}$, we know by Lemma \ref{lem: 2 a infty} that 
\[
M_n v_{i_1,n}^2(0) v_{i_2,n}^2(0) \le C.
\]
As in the previous proof, we have that $v_{j,n}/v_{j,n}(0) \to 1$ locally uniformly, for $j=i_1, i_2$, whence it follows that
\[
|M_n v_{i_1,n}^2 v_{i_2,n}^2 - M_n v_{i_1,n}^2(0) v_{i_2,n}^2(0)| \to 0
\]
uniformly on any compact set of $\R^N$. Therefore, there exists a constant $\lambda \ge 0$ such that, up to a subsequence, $M_n v_{i_1,n}^2 v_{i_2,n}^2 \to \lambda$ locally uniformly in $\R^N$. In turn, this implies that the last component $v_{i_3,n}$, which is bounded at $0$ (and hence in any compact set), converges locally uniformly to a function $v_{i_3}$ which is nonnegative, globally H\"older continuous, and satisfies $\Delta v_{i_3} = \lambda v_{i,3}$ in $\R^N$ (see Lemma \ref{lem: if i bdd}). But then $v_{i_3} \equiv 0$, by Proposition \ref{prop: Liou eq}, and this rules out the possibility that $i_3=1$. 

Now, without loss of generality, we suppose that $i_3=3$. We have that 
\[
M_n v_{1,n}(x) v_{2,n}^2(x) v_{3,n}^2(x) \le C M_n v_{1,n}^2(x) v_{2,n}^2(x) v_{3,n}^2(x)\le C v_{3,n}^2(x);
\]
hence $\Delta v_{1,n} \to 0$ locally uniformly in $\R^N$, which in turn implies that the function $w_{1,n}$, defined in Lemma \ref{lem: if i unb}, converges to a harmonic limit $w_1$. Since moreover $w_1$ is also globally H\"older continuous and non-constant, this is a contradiction.
%
%
%
%
%
%
\end{proof}

In what follows we focus at ruling out the possibility that exactly one component is unbounded at $0$. To this end, the following preliminary results will be useful.

\begin{lemma}\label{lem: if i unb 2}
Let $\mf{w}_n$ be defined in Lemma \ref{lem: if i unb}. Suppose in addition that $\Delta w_{i,n} \to 0$ in $L^1_{\loc}(\R^N)$ for an index $i \in \{1,2,3\}$. Then $w_{i,n} \to w_i$ strongly in $H^1_{\loc}(\R^N)$, and $w_i$ is harmonic.
\end{lemma}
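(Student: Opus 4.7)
The plan is to first obtain harmonicity of $w_i$ via a distributional limit, and then to derive strong $H^1_{\loc}$ convergence through a Caccioppoli-type testing argument, exploiting the local uniform boundedness of $w_{i,n}$ together with the $L^1_{\loc}$ control on $\Delta v_{i,n}$.

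First I observe that $\nabla w_{i,n} = \nabla v_{i,n}$ and $\Delta w_{i,n} = \Delta v_{i,n} \ge 0$, and that Lemma \ref{lem: basic prop}-($iv$) implies that $\{w_{i,n}\}$ has uniformly bounded oscillation on compact sets, hence it is locally uniformly bounded. Combining the local uniform convergence $w_{i,n} \to w_i$ from Lemma \ref{lem: if i unb} with the assumption $\Delta w_{i,n} \to 0$ in $L^1_{\loc}(\R^N)$, and passing to the limit in
\[
\int_{\R^N} w_{i,n}\, \Delta \varphi\,dx = \int_{\R^N} \Delta w_{i,n}\, \varphi\,dx \qquad \forall \varphi \in C^\infty_c(\R^N),
\]
one obtains $\Delta w_i = 0$ in $\mathcal{D}'(\R^N)$, so $w_i$ is harmonic by Weyl's lemma.

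For a uniform $H^1_{\loc}$ bound, I would fix a non-negative cutoff $\varphi \in C^\infty_c(\R^N)$ and test the equation for $v_{i,n}$ against $w_{i,n}\varphi^2$; integration by parts, together with Young's inequality applied to the mixed term $\int \varphi\, w_{i,n} \nabla w_{i,n} \cdot \nabla \varphi$, yields the Caccioppoli-type estimate
\[
\int_{\Omega_n} |\nabla w_{i,n}|^2 \varphi^2\,dx \le C \int_{\Omega_n} w_{i,n}^2 |\nabla \varphi|^2\,dx + C \|w_{i,n}\|_{L^\infty(\supp \varphi)} \int_{\Omega_n} |\Delta v_{i,n}|\,\varphi^2\,dx,
\]
whose right hand side is bounded uniformly in $n$. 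Hence, up to a subsequence, $w_{i,n} \weak w_i$ weakly in $H^1_{\loc}(\R^N)$.

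To upgrade weak to strong convergence, I would test once more the equation for $v_{i,n}$, this time with $(w_{i,n}-w_i)\varphi^2$. Integration by parts produces
\[
\int |\nabla w_{i,n}|^2 \varphi^2\,dx - \int \nabla w_{i,n} \cdot \nabla w_i\,\varphi^2\,dx = -2 \int (w_{i,n}-w_i)\varphi\, \nabla w_{i,n} \cdot \nabla \varphi\,dx - \int \Delta v_{i,n}\,(w_{i,n}-w_i)\varphi^2\,dx,
\]
and both terms on the right vanish in the limit: each is controlled by $\|w_{i,n}-w_i\|_{L^\infty(\supp\varphi)}$ times quantities uniformly bounded in $n$ (the $H^1_{\loc}$ norm of $w_{i,n}$ and the $L^1_{\loc}$ norm of $\Delta v_{i,n}$, respectively), while $\|w_{i,n}-w_i\|_{L^\infty(\supp\varphi)} \to 0$ by Lemma \ref{lem: if i unb}. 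Since the weak convergence in $H^1_{\loc}$ yields $\int \nabla w_{i,n} \cdot \nabla w_i\,\varphi^2\,dx \to \int |\nabla w_i|^2 \varphi^2\,dx$, I conclude $\int |\nabla w_{i,n}|^2 \varphi^2\,dx \to \int |\nabla w_i|^2 \varphi^2\,dx$, which together with the weak $H^1_{\loc}$ convergence promotes $\nabla w_{i,n} \to \nabla w_i$ in $L^2_{\loc}(\R^N)$. I do not anticipate any real obstacle here, since the argument is entirely driven by the local $L^\infty$ convergence of $w_{i,n}$ to $w_i$ and the $L^1_{\loc}$ convergence of $\Delta v_{i,n}$ to zero; the only care needed is in handling the cutoff in the Caccioppoli estimate so that the $L^\infty$ bound on $w_{i,n}$ is taken on $\supp \varphi$ rather than globally.
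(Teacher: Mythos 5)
Your proof is correct and follows essentially the same route as the paper: a Caccioppoli estimate with test function $w_{i,n}\varphi^2$ for the uniform $H^1_{\loc}$ bound, then testing with $(w_{i,n}-w_i)\varphi^2$ (the paper uses $(w_{i,n}-w_i)\varphi$, an immaterial difference) to upgrade to strong convergence. The only deviation is cosmetic: you establish harmonicity of $w_i$ first via a purely distributional passage to the limit, whereas the paper defers it to the end after obtaining strong $H^1$ convergence; both are valid.
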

\begin{proof}
Let $r>0$. It is not difficult to check that $\{w_{i,n}\}$ is bounded in $H^1(B_r)$: indeed, for any $\varphi \in C^\infty_c(B_r)$, we have
\[
\int_{B_r} |\nabla w_{i,n}|^2 \varphi^2\,dx = - 2 \int_{B_r} \varphi w_{i,n} \nabla w_{i,n} \cdot \nabla \varphi\,dx -  \int_{B_r} w_{i,n} \varphi^2 \Delta w_{i,n}\,dx,
\]
whence
\[
\int_{B_r} |\nabla w_{i,n}|^2 \varphi^2\,dx \le C  \int_{B_r}  w_{i,n}^2 |\nabla \varphi|^2\,dx + C \|w_{i,n}\|_{L^\infty({B_r})} \|\Delta w_{i,n}\|_{L^1({B_r})},
\]
which gives the boundedness in $H^1({B_r})$ (recall that the sequence $\{w_{i,n}\}$ is locally bounded in $\R^N$, by definition). Since $r$ was arbitrarily chosen, we infer that up to a subsequence $w_{i,n} \weak w_i$ weakly in $H^1_{\loc}(\R^N)$. The strong convergence in $H^1_{\loc}(\R^N)$ can be proved at this point exactly as in Lemma \ref{lem: if i bdd} (we multiply the equation for $w_{i,n}$ by $(w_{i,n}-w_i)\varphi$, and exploit the convergence $w_{i,n} \to w_i$ in $L^\infty_{\loc}(\R^N)$ and the boundedness of $\|\Delta w_{1,n}\|_{L^1({B_r})}$). The fact that the $w_i$ is harmonic follows now by taking the limit in the equation 
\[
\int_{\Omega_n} \nabla w_{i,n}\cdot \nabla \varphi\,dx = \int_{\Omega_n} (-\Delta w_{i,n})\varphi\,dx  \quad \forall \varphi \in C^\infty_c(\R^N). \qedhere
\]
\end{proof}

The next lemma is an important consequence of the minimality of $\mf{v}_n$.

\begin{lemma}\label{lem: min one 0} 
Suppose that there exist $x_0 \in \R^N$ and $R>\rho>0$ such that the following holds for two indexes $i$ and $j$: $v_{j,n} \to 0$ uniformly in $B_R(x_0)$ and in $H^1(B_R(x_0))$, and $v_{i,n} \to v_i$ uniformly in $B_R(x_0)$, with $v_i|_{S_{\rho}(x_0)} \not \equiv  0$. Then $v_i >0$ in $B_\rho(x_0)$.
\end{lemma}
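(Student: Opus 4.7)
My plan is to show that $\tilde v_i := v_i$ is harmonic in $B_\rho(x_0)$, and then conclude by the strong maximum principle: since $v_i \ge 0$ in $B_\rho(x_0)$, $v_i$ is continuous, and $v_i|_{S_\rho(x_0)} \not\equiv 0$, a harmonic $v_i$ must be strictly positive inside. To ease notation, write $v_k$ for the third component (so $\{i,j,k\}=\{1,2,3\}$) and $P_n := v_{j,n}^2 v_{k,n}^2$; note $v_i$ is automatically subharmonic as the uniform limit of the subharmonic functions $v_{i,n}$.

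\textbf{Step 1: An interior interaction estimate from minimality.} For $\rho'<\rho$ pick a radial cutoff $\zeta\in C^\infty_c(B_\rho(x_0))$ with $0\le\zeta\le1$ and $\zeta\equiv 1$ on $B_{\rho'}(x_0)$, and compare $\mf v_n$ with the competitor obtained by replacing $v_{j,n}$ with $(1-\zeta)v_{j,n}$ (this preserves the boundary values on $\partial B_\rho(x_0)$). Expanding $|\nabla((1-\zeta)v_{j,n})|^2$, integrating by parts the cross term $2(1-\zeta)v_{j,n}\nabla v_{j,n}\cdot\nabla\zeta=(1-\zeta)\nabla v_{j,n}^2\cdot\nabla\zeta$, and using $\zeta(2-\zeta)\equiv 1$ on $B_{\rho'}$, the minimality inequality yields
\[
\int_{B_{\rho'}}|\nabla v_{j,n}|^2+M_n\int_{B_{\rho'}} v_{i,n}^2v_{j,n}^2v_{k,n}^2\,dx\ \le\ \int_{B_\rho}v_{j,n}^2\,(1-\zeta)|\Delta\zeta|\,dx\ \le\ C\|v_{j,n}\|_{L^\infty(B_\rho)}^2\ \longrightarrow 0.
\]
Thus $M_n\int_{B_{\rho'}} v_{i,n}^2 P_n \to 0$.

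\textbf{Step 2: Harmonicity of $v_i$ on its positivity set.} Set $U:=\{v_i>0\}\cap B_{\rho'}(x_0)$, which is a non-empty open set by the hypothesis $v_i|_{S_\rho}\not\equiv 0$ and continuity. Given any $\varphi\in C^\infty_c(U)$, uniform convergence provides $c_\varphi>0$ with $v_{i,n}\ge c_\varphi$ on $\mathrm{supp}(\varphi)$ for $n$ large, so $v_{i,n}\le v_{i,n}^2/c_\varphi$ there. Testing the equation $\Delta v_{i,n}=M_n v_{i,n} P_n$ with $\varphi$,
\[
\Bigl|\int v_{i,n}\Delta\varphi\Bigr|=M_n\int\varphi\,v_{i,n}P_n\le\frac{\|\varphi\|_\infty}{c_\varphi}\,M_n\int_{\mathrm{supp}\,\varphi} v_{i,n}^2 P_n\ \longrightarrow 0,
\]
by Step 1. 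Passing to the limit using $v_{i,n}\to v_i$ uniformly, $\int v_i\Delta\varphi=0$, so $v_i$ is harmonic in $U$.

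\textbf{Step 3: Extending harmonicity to $B_\rho(x_0)$.} Let $h_i^\rho$ denote the harmonic extension of $v_i|_{\partial B_\rho(x_0)}$ inside $B_\rho(x_0)$: by the strong maximum principle for harmonic functions, $h_i^\rho>0$ on $B_\rho(x_0)$. I will show $v_i=h_i^\rho$ on $B_\rho(x_0)$, which completes the proof. Let $h_{i,n}$ denote the harmonic extension of $v_{i,n}|_{\partial B_\rho(x_0)}$; by uniform convergence of the traces and standard estimates, $h_{i,n}\to h_i^\rho$ uniformly and in $H^1(B_\rho)$, and by subharmonicity $h_{i,n}\ge v_{i,n}$. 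Testing the $v_{i,n}$-equation with $v_{i,n}-h_{i,n}\in H^1_0(B_\rho)$ and using $\int\nabla v_{i,n}\cdot\nabla h_{i,n}=\int|\nabla h_{i,n}|^2$ gives the identity
\[
\int_{B_\rho}|\nabla(v_{i,n}-h_{i,n})|^2 \;=\; M_n\int_{B_\rho}(h_{i,n}-v_{i,n})\,v_{i,n}\,P_n\,dx.
\]
The plan is to show the right-hand side vanishes. Using $h_{i,n}\le C$ and splitting $B_\rho$ into $\{v_{i,n}\ge\varepsilon\}$ (where $v_{i,n}\le v_{i,n}^2/\varepsilon$, so by Step 1 the contribution is $\le C\varepsilon^{-1}M_n\int v_{i,n}^2 P_n\to 0$) and $\{v_{i,n}<\varepsilon\}$ (where the integrand is bounded by $C\varepsilon\, P_n$), the task reduces to bounding $M_n\int_{B_\rho} P_n$ uniformly, then letting $\varepsilon\to 0^+$. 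This last bound is obtained by a second minimality comparison—the same cutoff competitor for $v_{j,n}$ combined with the Hopf-type decay estimate of Lemma~\ref{lem: decay} applied in the region where $h_i^\rho$ is bounded below—together with testing the $v_{j,n}$-equation against $h_{i,n}^2\varphi$ (which is smooth since $h_{i,n}$ is harmonic) to exploit that $\|v_{j,n}\|_{L^\infty}\to 0$ and $v_{j,n}\to 0$ in $H^1$. Once $\int|\nabla(v_{i,n}-h_{i,n})|^2\to 0$ is established, $v_i=h_i^\rho$ in $B_\rho(x_0)$ follows from weak $H^1$ lower semicontinuity together with the matching traces, and hence $v_i=h_i^\rho>0$ in $B_\rho(x_0)$.

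\textbf{Main obstacle.} The delicate point is Step 3: converting the cubic estimate $M_n\int v_{i,n}^2P_n\to 0$ into the quadratic estimate $M_n\int P_n\le C$, without a priori control on $v_{k,n}$ or on the rate $\|v_{j,n}\|_{L^\infty}\to 0$ relative to $M_n$. The splitting argument in terms of $\varepsilon$ reduces the problem to this uniform bound, which in turn must be extracted from the combined minimality/subharmonicity structure together with the uniform convergence $v_{i,n}\to v_i$ on the support region.
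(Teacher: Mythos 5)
Your Steps~1 and~2 are sound: the minimality comparison with $(1-\zeta)v_{j,n}$ correctly yields $M_n\int_{B_{\rho'}}v_{i,n}^2P_n\to 0$, and from this $v_i$ is harmonic on its positivity set. But Step~3 has a genuine gap that your own ``Main obstacle'' paragraph admits: passing from the cubic bound $M_n\int v_{i,n}^2P_n\to 0$ to the quadratic bound $M_n\int_{B_\rho}P_n\le C$ is not established, and in fact nothing in the hypotheses of the lemma controls $v_{k,n}$, so this bound is not available in general. The sketch you offer (``a second minimality comparison... combined with the Hopf-type decay estimate... together with testing the $v_{j,n}$-equation against $h_{i,n}^2\varphi$'') is too vague to constitute a proof: testing the $v_{j,n}$-equation only gives control over integrals carrying a factor $v_{j,n}$ or $v_{j,n}^2$ \emph{together with} $v_{i,n}^2$, which is exactly what you cannot use on the set $\{v_{i,n}<\varepsilon\}$. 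Without the quadratic bound, the $\varepsilon$-splitting does not close.

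The paper sidesteps the issue entirely by never trying to prove harmonicity of the limit inside $B_\rho$. Instead it argues by contradiction: assume $\{v_i=0\}\cap B_\rho\ne\emptyset$, and build a competitor $\tilde{\mf{v}}_n$ that (a) replaces $v_{i,n}$ in $B_\rho$ by the harmonic extension of its trace on $S_\rho$, and (b) multiplies $v_{j,n}$ by a cutoff $\eta$ with $\eta\equiv 0$ on $B_\rho$, $\eta\equiv 1$ near $S_R$. Then the interaction term does not increase \emph{pointwise} (because $\tilde v_{j,n}\equiv 0$ on the set $B_\rho\supset\{\tilde v_{i,n}\ne v_{i,n}\}$ and $\tilde v_{j,n}\le v_{j,n}$ everywhere), the $v_j$-Dirichlet energy changes by $o_n(1)$ thanks to $v_{j,n}\to 0$ in $H^1(B_R)$, and the $v_i$-Dirichlet energy \emph{drops by a fixed amount} $\delta>0$ by the Dirichlet principle (since by the contradiction hypothesis the harmonic extension differs from $v_i$). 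This gives $c_n\le c_n-\delta+o_n(1)$, a contradiction, without ever needing to estimate $M_n\int P_n$. The moral is that the quadratic interaction bound is not needed: the cutoff of $v_{j,n}$ on $B_\rho$ trivializes the control of the interaction for the competitor, so you should compare energies of the competitor directly rather than first trying to establish harmonicity of $v_i$ in $B_\rho$.
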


\begin{proof}
Without loss of generality, we fix $x_0=0$, $j=3$, and $i =1$ (in this proof we will not use that $v_1$ is non-constant, hence the same proof works for every possible choice of $i$ and $j$). We suppose by contradiction that $\{v_1=0\} \cap B_\rho \neq \emptyset$, and, exploiting this fact, we will find a contradiction with the minimality of $\mf{v}_n$, Lemma \ref{lem: basic prop}-($v$).

Let 
\[
c_n:= J_{M_n}(\mf{v}_n, B_R) = \inf \left\{ J_{M_n}(\mf{u}, B_R) : \mf{u} \in H^1(B_R, \R^3) \text{ such that } \mf{u}-\mf{v}_n \in H_0^1(B_R, \R^3)\right\}.
\]
Let $\eta \in C^1(\overline{B_R})$ be a radial cut-off function with the following properties: $0 \le \eta \le 1$, $\eta$ is radially increasing, $\eta \equiv 0$ in $B_\rho$, $\eta =1$ on $S_{R}$. We define a competitor $\tilde{{v}}_n$ by defining
\[
\tilde v_{1,n} := \begin{cases} v_{1,n} & \text{in $B_R \setminus B_\rho$} \\ \text{harmonic extension of $v_{1,n}$ on $S_\rho$} & \text{in $B_\rho$}, 
\end{cases} \qquad \tilde v_{2,n}:= v_{2,n}, \quad \tilde v_{3,n}:= \eta v_{3,n}.
\]
By taking the limit as $n \to \infty$, we have that $\tilde v_{1,n} \to \tilde v_1$ in $H^1(B_R) \cap C(\overline{B_R})$, where 
\[
\tilde v_{1} := \begin{cases} v_{1} & \text{in $B_R \setminus B_\rho$} \\ \text{harmonic extension of $v_{1}$ on $S_\rho$} & \text{in $B_\rho$}.
\end{cases}
\]
Since $v_1|_{S_\rho} \not \equiv 0$, by the maximum principle we have that $\tilde v_{1}>0$ in $B_\rho$, and hence $\tilde v_{1} \neq v_{1}$. This implies, by the Dirichlet principle, that
\[
\int_{B_\rho}|\nabla \tilde v_1|^2\,dx < \int_{B_\rho}|\nabla v_1|^2\,dx,
\]
and by $H^1$-convergence we deduce that there exists $\delta>0$ such that
\beq\label{diff 1}
\int_{B_R}\left(|\nabla \tilde v_{1,n}|^2 - |\nabla v_{1,n}|^2\right)\,dx \le  - \delta,
\eeq
for every $n$ large enough. Moreover, since $\tilde v_{3,n} \equiv 0$ in $B_\rho\supset \{\tilde v_{1,n} \neq v_{1,n}\}$,  and $\tilde v_{3,n} \le v_{3,n}$ in $B_R$, we have that
\beq\label{diff prod}
\int_{B_R} M_n \prod_{j=1}^3 \tilde v_{j,n}^2\,dx = \int_{B_R \setminus B_\rho} M_n v_{1,n}^2 v_{2,n}^2 \tilde v_{3,n}^2\,dx \le \int_{B_R} M_n \prod_{j=1}^3 v_{j,n}^2\,dx.
\eeq
Finally, by using the fact that $v_{3,n} \to 0$ in $H^1(B_R(x_0))$, we deduce that
\beq\label{diff 3}
\int_{B_R} \left(|\nabla \tilde v_{3,n}|^2-|\nabla v_{3,n}|^2\right)\,dx \to 0
\eeq
as $n \to \infty$. Therefore, by \eqref{diff 1}-\eqref{diff 3}, we infer that
\[
\begin{split}
c_n &\le J_{M_n}(\tilde{\mf{v}}_n,B_R) \\
& \le J_{M_n}(\mf{v}_n,B_R) + \int_{B_R} \left(|\nabla \tilde v_{1,n}|^2 -|\nabla v_{1,n}|^2\right)\,dx + \int_{B_R} \left(|\nabla \tilde v_{3,n}|^2-|\nabla v_{3,n}|^2\right)\,dx \\
& \le c_n -\delta + o_n(1)
\end{split}
\]
(where $o_n(1) \to 0$ as $n \to \infty$), which is a contradiction for sufficiently large $n$.
\end{proof}

\begin{lemma}\label{lem: 1 a infty 1}
It is not possible that there exists exactly one component $v_{\underline{i},n}(0) \to +\infty$ (with the remaining ones being bounded at $0$), which in addition satisfies
\[
M_n v_{\underline{i},n}^2(0) \le C.
\]
\end{lemma}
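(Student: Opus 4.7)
The plan is to split the argument into two cases according to which component is unbounded at $0$, and in each case to pass to the limit in a suitable subsystem and invoke the Liouville theorems of Section \ref{sec: liou}. The contradiction will come either from the non-constancy of $v_1$ granted by Lemma \ref{lem: if i bdd}($ii$), or from the non-constancy of $w_1$ granted by Lemma \ref{lem: if i unb}.

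\medskip

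\emph{Case A: $\underline{i} \in \{2,3\}$.} Up to relabeling we take $\underline{i}=3$, so $\{v_{1,n}(0)\}, \{v_{2,n}(0)\}$ are bounded, $v_{3,n}(0) \to +\infty$ and $M_n v_{3,n}^2(0) \le C$. Lemma \ref{lem: if i bdd} provides locally uniform (and $H^1_{\loc}$) limits $v_{1,n} \to v_1$ and $v_{2,n} \to v_2$, both globally $\alpha$-H\"older continuous, with $v_1$ non-constant on $B_1$. The locally bounded oscillation (Lemma \ref{lem: basic prop}($iv$)) yields $v_{3,n}/v_{3,n}(0) \to 1$ locally uniformly, and passing to a further subsequence $M_n v_{3,n}^2(0) \to \mu \ge 0$, hence $M_n v_{3,n}^2 \to \mu$ locally uniformly. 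Letting $n \to \infty$ in the equations for $v_{1,n}$ and $v_{2,n}$ produces
\[
\Delta v_1 = \mu\, v_1 v_2^2, \qquad \Delta v_2 = \mu\, v_2 v_1^2 \qquad \text{in $\R^N$.}
\]
If $\mu>0$, Proposition \ref{prop: Liou sys 2}($i$) forces one of $v_1,v_2$ to vanish and the other to be constant, contradicting the non-constancy of $v_1$. If $\mu=0$, then $v_1$ is a globally $\alpha$-H\"older harmonic function on $\R^N$, hence constant by Proposition \ref{prop: Liou eq}($i$), again a contradiction.

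\medskip

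\emph{Case B: $\underline{i}=1$.} Now $v_{1,n}(0) \to +\infty$ and $M_n v_{1,n}^2(0) \le C$, so $M_n v_{1,n}(0) = M_n v_{1,n}^2(0)/v_{1,n}(0) \le C/v_{1,n}(0) \to 0$. Combining this with $v_{1,n}/v_{1,n}(0) \to 1$ locally uniformly and the local boundedness of $v_{2,n}, v_{3,n}$ furnished by Lemma \ref{lem: if i bdd}, we conclude that
\[
|\Delta v_{1,n}| = M_n v_{1,n}\, v_{2,n}^2\, v_{3,n}^2 \longrightarrow 0 \qquad \text{locally uniformly in $\R^N$.}
\]
With $w_{1,n} := v_{1,n}-v_{1,n}(0)$, Lemma \ref{lem: if i unb} produces a locally uniform limit $w_1$ which is globally $\alpha$-H\"older continuous and non-constant; Lemma \ref{lem: if i unb 2} upgrades the convergence to $H^1_{\loc}(\R^N)$ and identifies $w_1$ as harmonic on $\R^N$. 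Proposition \ref{prop: Liou eq}($i$) then forces $w_1$ to be constant, the desired contradiction.

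\medskip

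The main obstacle lies in the correct choice of the auxiliary limit in each case: when the only unbounded component is $v_{1,n}$ itself (Case B) one cannot pass to the limit directly and must work with the shifted sequence $w_{1,n}$, using Lemmas \ref{lem: if i unb} and \ref{lem: if i unb 2}; in Case A one must further bifurcate according to the value of $\mu := \lim M_n v_{3,n}^2(0)$, invoking the two-component competition Liouville theorem (Proposition \ref{prop: Liou sys 2}($i$)) when $\mu>0$ and the scalar harmonic Liouville theorem (Proposition \ref{prop: Liou eq}($i$)) when $\mu=0$, with the non-constancy of $v_1$ closing both subcases.
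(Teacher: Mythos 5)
Your proof is correct and follows the same overall strategy as the paper: pass to the limit in the equations for the two bounded components and invoke the two-component Liouville theorem (Proposition~\ref{prop: Liou sys 2}, or the classical one when the limiting coefficient is zero) to conclude that the unbounded index must be $\underline{i}=1$; then work with the shifted sequence $w_{1,n}$ to produce a non-constant, globally H\"older continuous, harmonic limit, contradicting Proposition~\ref{prop: Liou eq}. Your explicit split into Case A ($\underline{i}\neq1$) and Case B ($\underline{i}=1$) is just a repackaging of the paper's argument that $v_{i_1},v_{i_2}$ are both constant and hence $\underline{i}=1$.

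The one genuine (if small) departure is in Case B, in how you show $\Delta v_{1,n}\to 0$. The paper uses the integral bound of Lemma~\ref{lem: if i bdd}($iii$), $M_n\int_{B_r}v_{1,n}^2 v_{2,n}^2 v_{3,n}\,dx\le C$, and feeds in $\inf_{B_r}v_{1,n}\to+\infty$ to get $\|\Delta v_{1,n}\|_{L^1(B_r)}\to 0$. You instead note directly that $M_n v_{1,n}(0)=M_n v_{1,n}^2(0)/v_{1,n}(0)\to 0$, and combine this with the locally bounded oscillation (so $v_{1,n}/v_{1,n}(0)\to 1$ locally uniformly) and the local boundedness of $v_{2,n},v_{3,n}$ to obtain $\Delta v_{1,n}=M_n v_{1,n}v_{2,n}^2 v_{3,n}^2\to 0$ locally uniformly; this is stronger than $L^1_{\loc}$ control and still meets the hypothesis of Lemma~\ref{lem: if i unb 2}. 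Your pointwise route is slightly more elementary and transparent, avoiding the integral estimate entirely, though both arguments land in the same place.
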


\begin{proof}
We argue by contradiction. If $v_{\underline{i},n}(0) \to +\infty$ and $M_n v_{\underline{i},n}^2(0) \le C$, then necessarily $M_n \to 0$, and $\{M_n v_{\underline{i},n}^2\}$ is locally bounded, since $\{v_{\underline{i},n}\}$ has locally bounded oscillation. More precisely
\[
M_n v_{\underline{i},n}^2(x) = M_n v_{\underline{i},n}^2(0)(1+o_n(1)),
\] 
locally uniformly in $\R^N$, and hence up to a subsequence $M_n v_{\underline{i},n}^2 \to \lambda$ locally uniformly in $\R^N$, where $\lambda \ge 0$ is a constant. This implies that the other components $\{v_{i_1,n}\}$ and $\{v_{i_2,n}\}$, which are bounded at $0$, and for which the conclusion of Lemma \ref{lem: if i bdd} holds true, are such that $(v_{i_1,n}, v_{i_2,n}) \to (v_{i_1}, v_{i_2})$ locally uniformly and in $H^1_{\loc}(\R^N)$, and the limit functions $v_{i_1}, v_{i_2}$ are globally H\"older continuous in $\R^N$ and satisfy
\[
\begin{cases}
\Delta v_{i_1}= \lambda \, v_{i_1}\, v_{i_2}^2 &  \\
\Delta v_{i_1}= \lambda \, v_{i_1}^2 \, v_{i_2} & \text{in $\R^N$}. \\
v_{i_1}, v_{i,2}  \ge 0
\end{cases}
\]
By Proposition \ref{prop: Liou sys 2} (or by the classical Liouville theorem in case $\lambda=0$), we deduce that $v_{i_1}$ and $v_{i_2}$ are both constant, and hence it is necessary that $\underline{i}=1$ (see Lemma \ref{lem: if i bdd} again).

Now, let $r>0$ be arbitrarily chosen. By Lemma \ref{lem: if i bdd}, there exists $C>0$ such that
\[
M_n \int_{B_r}  v_{1,n}^2 \,v_{2,n}^2 \, v_{3,n}\,dx \le C \quad \text{and} \quad \sup_{B_r} v_{3,n} \le C.
\]
Then, using the fact that $\inf_{B_r} v_{1,n} \to +\infty$, we obtain
\[
M_n \int_{B_r}  v_{1,n} \,v_{2,n}^2 \, v_{3,n}^2\,dx \le C M_n \int_{B_r} \frac{v_{1,n}^2}{\inf_{B_r} v_{1,n}} \,v_{2,n}^2 v_{3,n}\,dx \le \frac{C}{\inf_{B_r} v_{1,n}} \to 0,
\]
which in turn gives $\|\Delta v_{1,n}\|_{L^1(B_r)} \to 0$ as $n \to \infty$. If we consider the function $w_{1,n}:= v_{1,n}-v_{1,n}(0)$, since $\Delta w_{1,n}=\Delta v_{1,n}$, Lemmas \ref{lem: if i unb} and \ref{lem: if i unb 2} imply that $w_{1,n} \to w_1$ locally uniformly and strongly in $H^1_{\loc}(\R^N)$, with $w_1$ globally H\"older continuous, harmonic, and non-constant. This is the desired contradiction.
\end{proof}

\begin{lemma}\label{lem: 1 a infty 2}
It is not possible that there exists exactly one component $v_{\underline{i},n}(0) \to +\infty$ (with the remaining ones being bounded at $0$).
\end{lemma}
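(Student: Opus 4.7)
The strategy is to reduce to the previously treated cases by combining the competition estimates already developed with a new input from the minimality condition (Lemma \ref{lem: min one 0}). By Lemma \ref{lem: 1 a infty 1} I may assume $M_n v_{\underline{i},n}^2(0)\to+\infty$, which together with the locally bounded oscillation of $\mf{v}_n$ yields $M_n v_{\underline{i},n}^2\to+\infty$ locally uniformly in $\R^N$. Denoting the two bounded-at-origin components by $v_{i_1,n},v_{i_2,n}$, Lemma \ref{lem: if i bdd} ensures convergence $v_{i_j,n}\to v_{i_j}$ locally uniformly and strongly in $H^1_{\loc}(\R^N)$, with $v_{i_j}$ globally $\alpha$-H\"older, nonnegative and subharmonic.

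The first step is to extract total segregation of the bounded pair. From
\[
-\Delta v_{i_j,n} \le -(M_n v_{\underline{i},n}^2)\,v_{i_{3-j},n}^2\,v_{i_j,n},
\]
on any ball where $v_{i_{3-j}}\ge c>0$ the coefficient blows up, so Lemma \ref{lem: decay} forces $v_{i_j,n}\to 0$ there. Consequently $v_{i_1}\,v_{i_2}\equiv 0$ in $\R^N$, and Proposition \ref{prop: Liou sys 2}(ii) implies that one of $v_{i_1},v_{i_2}$ vanishes identically.

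If $\underline{i}=1$, then (relabelling $2$ and $3$ if needed) $v_3\equiv 0$, and the endgame of Lemma \ref{lem: 1 a infty 1} applies verbatim: combining Lemma \ref{lem: if i bdd}(iii) for $i=3$ with the pointwise bounds $v_{3,n}^2\le \|v_{3,n}\|_{L^\infty(B_r)}\,v_{3,n}$ and $v_{1,n}\le v_{1,n}^2/\inf_{B_r}v_{1,n}$ (the latter using $v_{1,n}\to+\infty$ locally uniformly) yields $\int_{B_r}|\Delta v_{1,n}|\,dx\to 0$, so that Lemmas \ref{lem: if i unb} and \ref{lem: if i unb 2} produce a globally $\alpha$-H\"older, harmonic and nonconstant limit $w_1$ of $w_{1,n}=v_{1,n}-v_{1,n}(0)$, contradicting Proposition \ref{prop: Liou eq}(i).

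The remaining case $\underline{i}\in\{2,3\}$, say $\underline{i}=2$, is the main obstacle: here the non-vanishing limit among $\{v_{i_1},v_{i_2}\}$ is necessarily $v_1$ (which is nonconstant by Lemma \ref{lem: if i bdd}(ii)), and a globally $\alpha$-H\"older, nonnegative, nonconstant subharmonic function on $\R^N$ is not a priori excluded, so one cannot close the argument on subharmonicity alone. This is where minimality enters: Lemma \ref{lem: min one 0} and its contrapositive, applied around any point $x_*$ with $v_1(x_*)=0$ using that $v_{3,n}\to 0$ uniformly and in $H^1_{\loc}$, force $v_1\equiv 0$ on every sphere $S_\rho(x_*)$ and hence, by continuity, $v_1\equiv 0$ on $\R^N$, contradicting the nonconstancy of $v_1$. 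Therefore $v_1>0$ on $\R^N$ and $\inf_{B_r} v_{1,n}\ge c_r>0$ for large $n$. The same three-factor estimate as in the previous case then gives
\[
\int_{B_r}|\Delta v_{1,n}|\,dx \le \frac{\|v_{3,n}\|_{L^\infty(B_r)}}{\inf_{B_r} v_{1,n}}\,M_n\int_{B_r} v_{1,n}^2 v_{2,n}^2 v_{3,n}\,dx \longrightarrow 0,
\]
so that, passing to the limit, $v_1$ is harmonic, globally $\alpha$-H\"older and nonconstant, contradicting Proposition \ref{prop: Liou eq}(i) once more and closing the proof.
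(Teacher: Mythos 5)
Your proof is correct and follows essentially the same route as the paper: segregation of the two bounded components via the exponential decay Lemma \ref{lem: decay}, identification of the vanishing one by Proposition \ref{prop: Liou sys 2}, passage to an $L^1_{\loc}$-estimate on $\Delta v_{1,n}$ through the three-factor trick and Lemma \ref{lem: if i bdd}(iii), and finally the minimality Lemma \ref{lem: min one 0} to propagate positivity of $v_1$ and contradict the Liouville theorem. The only differences are cosmetic: you treat $\underline{i}=1$ as a self-contained contradiction while the paper phrases the same estimate as ``$w_{\underline{i}}$ is constant, hence $\underline{i}\neq 1$,'' and you invoke Lemma \ref{lem: min one 0} through its contrapositive at a zero of $v_1$ rather than directly via the subharmonicity of $v_1$ on spheres, but these are equivalent ways of closing the same argument.
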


\begin{proof}
We will split the proof into three steps. We argue by contradiction, and suppose that there exists exactly one component $v_{\underline{i},n}$ such that $v_{\underline{i},n}(0) \to +\infty$. Thanks to Lemma \ref{lem: 1 a infty 1}, up to a subsequence we can suppose in addition that 
\[
M_n v_{\underline{i},n}^2(0) \to +\infty,
\]
and, since $v_{\underline{i},n}$ has locally bounded oscillation, we have in fact that
\[
\min_K M_n v_{\underline{i},n}^2 \to +\infty \quad \forall K \subset \R^N \text{ compact}.
\]
Instead the other components $v_{i_1,n}$ and $v_{i_2,n}$ are bounded at $0$, and hence the conclusions of Lemma \ref{lem: if i bdd} hold for them. 

\medskip

\emph{Step 1)} One component, say $v_{i_2}$, vanishes identically, and the other satisfies
\[
\Delta v_{i_1}=0 \quad \text{in $\{v_{i_1}>0\}$}.
\]
Moreover, 
\beq\label{stima int prod}
\lim_{n \to \infty} M_n \int_{B_r} \prod_{j=1}^3 v_{j,n}^2 \, dx = 0,
\eeq
for every $r>0$.

\medskip

We claim at first that the limits $v_{i_1}$ and $v_{i_2}$ satisfy the segregation condition $v_{i_1}\,v_{i_2} \equiv 0$ in $\R^N$. We take $x_0 \in \R^N$ such that $v_{i_1}(x_0)>0$, and we show that $v_{i_2}(x_0) =0$; since the role of $v_{i_1}$ and $v_{i_2}$ can be exchanged, the claim follows. By continuity and uniform local convergence, if $v_{i_1}(x_0)>0$, then there exist $\rho, \delta>0$ such that
\[
v_{i_1,n}(x) \ge \delta \quad \text{for every $x \in B_{2\rho}(x_0)$, for every $n$ large}.
\]
Therefore, denoting by
\[
I_n:= \inf_{B_{2\rho}(x_0)} M_n v_{\underline{i},n}^2  \to +\infty,
\]
we have that
\[
-\Delta v_{i_2,n} \le -\delta^2 I_n v_{i_2,n}, \quad v_{i_2,n} \ge 0 \quad \text{in $B_{2\rho}(x_0)$},
\]
and moreover $v_{i_2,n}$ is bounded in $B_{2\rho}(x_0)$. The decay estimate in Lemma \ref{lem: decay} gives 
\[
\sup_{B_\rho(x_0)} v_{i_2,n} \le Ce^{- C\sqrt{I_n}} \to 0
\]
(the constant $C>0$ depends on $\rho$, but not on $n$), and in particular $v_{i_2}(x_0) = 0$. 

This estimate also allows to show that $v_{i_1}$ is harmonic when positive: indeed, $\{v_{i_1}>0\}$ is open by continuity; if $B_{2\rho}(x_0) \subset \{v_{i_1}>0\}$, then
\[
\begin{split}
\sup_{x \in B_{\rho}(x_0)} |\Delta v_{i_1,n}(x)| &= \sup_{x \in B_{\rho}(x_0)} M_n v_{\underline{i},n}^2(x) v_{i_2,n}^2(x) v_{i_1,n}(x) \\
& \le  C I_n(1+o_n(1)) e^{- C\sqrt{I_n}} \to 0
\end{split}
\]
as $n \to \infty$ (we used once again that $v_{\underline{i},n}$ has locally bounded oscillation, so that $\sup_{B_\rho(x_0)} M_n v_{\underline{i},n}^2 =I_n(1+o_n(1))$).

To sum up, 
\[
\begin{cases}
\Delta v_{i_1}=0 & \text{in $\{v_{i_1}>0\}$} \\
\Delta v_{i_2}=0 & \text{in $\{v_{i_2}>0\}$} \\
v_{i_1}, v_{i_2} \ge 0, \quad v_{i_1} v_{i_2} \equiv 0 & \text{in $\R^N$}
\end{cases}
\]
and $v_{i_1}, v_{i_2}$ are globally H\"older continuous in $\R^N$. By Proposition \ref{prop: Liou sys 2}, one component, say $v_{i_2}$, vanishes identically. 

To complete the proof of Step 1, it remains to prove that \eqref{stima int prod} holds. By the segregation condition, $B_r \subset (B_r \cap \{v_{i_1}=0\}) \cup  (B_r \cap \{v_{i_2}=0\})$. Therefore,
\begin{align*}
M_n \int_{B_r} \prod_{j=1}^3 v_{j,n}^2 \, dx &\le \|v_{i_1,n}\|_{L^\infty(\{v_{i_1}=0\} \cap B_r)} M_n \int_{B_r} v_{i_1,n} \prod_{j \neq i_1} v_{j,n}^2 \, dx \\
& \qquad + \|v_{i_2,n}\|_{L^\infty(\{v_{i_2}=0\} \cap B_r)} M_n \int_{B_r} v_{i_2,n} \prod_{j \neq i_2} v_{j,n}^2 \, dx,
\end{align*}
and \eqref{stima int prod} follows, since the two integrals on the right hand side are bounded by Lemma \ref{lem: if i bdd}.

\medskip

Notice that, since $v_{i_2} \equiv 0$, we have that necessarily $i_2 \neq 1$. Therefore, from now on we assume that $i_2=2$, without loss of generality. 

\medskip

\emph{Step 2)} $\underline{i}=3$ and $i_1=1$.

\medskip

Let us consider the functions $w_{\underline{i},n}$ introduced in Lemma \ref{lem: if i unb}. We aim at showing that the limit $w_{\underline{i}}$ is constant. By Lemma \ref{lem: if i bdd}, we know that
\[
M_n \int_{B_r} v_{\underline{i},n}^2\, v_{i_2,n}^2\, v_{i_1,n}\,dx \le C
\]
for every $r>0$, since $v_{i_1,n}$ is bounded at $0$. Instead $v_{\underline{i},n} \to +\infty$ locally uniformly, whence
\[
M_n \int_{B_r} v_{\underline{i},n}\, v_{i_2,n}^2\, v_{i_1,n}^2\,dx \le C M_n \int_{B_r} \frac{v_{\underline{i},n}^2}{\inf_{B_r} v_{\underline{i},n}}\, v_{i_2,n}^2\, v_{i_1,n}\,dx \le \frac{C}{\inf_{B_r} v_{\underline{i},n}}\to 0.
\]
We infer that $\Delta w_{\underline{i},n} = \Delta v_{\underline{i},n} \to 0$ in $L^1_{\loc}(\R^N)$, and Lemma \ref{lem: if i unb 2} ensures that $w_{\underline{i}}$ is harmonic in $\R^N$; being also globally H\"older continuous, it must be constant.

\medskip

What we proved so far implies that, under the assumptions of the lemma, the following limits take place locally uniformly: $v_{2,n} \to 0$, $v_{3,n} \to +\infty$ with $w_{3,n} \to w_3 \equiv const.$, and $v_{1,n} \to v_1$ non-constant in $B_1$. Moreover, all the previous limits but the second one are also strong in $H^1_{\loc}(\R^N)$, and \eqref{stima int prod} holds.

\medskip

\emph{Step 3)} Conclusion of the proof.

\medskip

Since $v_1$ is non-constant, and hence non-trivial, in any ball $B_\rho$ with $\rho>1$, while $v_{2,n} \to 0$, the assumptions of Lemma \ref{lem: min one 0} are satisfied. Therefore, $v_1>0$ in $B_\rho$ for every $\rho>1$, namely $v_1>0$ in $\R^N$. It is then a harmonic function in $\R^N$, globally H\"older continuous, and non-constant, a contradiction.
\end{proof}

At this stage, by Lemmas \ref{lem: all infty}, \ref{lem: 2 infty} and \ref{lem: 1 a infty 2}, we deduce that the sequence $\{\mf{v}_n(0)\}$ is bounded. In particular, the conclusions in Lemma \ref{lem: if i bdd} hold for all the components.

\begin{lemma}\label{lem: v e M bdd}
It is not possible that both $\{\mf{v}_n(0)\}$ and $\{M_n\}$ are bounded.
\end{lemma}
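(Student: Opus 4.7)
The plan is to pass to the limit in the blow-up system \eqref{system blow-up} along a further subsequence and apply the Liouville theorems of Section \ref{sec: liou}. Assume by contradiction that both $\{\mf{v}_n(0)\}$ and $\{M_n\}$ are bounded. Since each component of $\mf{v}_n$ is bounded at the origin, Lemma \ref{lem: if i bdd} yields a subsequence along which $v_{i,n} \to v_i$ locally uniformly and strongly in $H^1_{\loc}(\R^N)$ for $i=1,2,3$, with $v_1$ non-constant in $B_1$.

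Before passing to the limit in the equation, I would verify that each $v_i$ is globally $\alpha$-H\"older continuous on $\R^N$. By Lemma \ref{lem: basic prop}-($i$), the sequence $\{\bar{\mf{v}}_n\}$ has uniformly bounded $\alpha$-H\"older seminorm; by Ascoli-Arzel\`a plus a diagonal argument, up to a further subsequence $\bar{\mf{v}}_n \to \bar{\mf{v}}$ locally uniformly with $\bar{\mf{v}}$ globally $\alpha$-H\"older on $\R^N$. Lemma \ref{lem: basic prop}-($iii$) gives $\mf{v}_n - \bar{\mf{v}}_n \to 0$ locally uniformly, hence $v_i \equiv \bar v_i$, which is globally $\alpha$-H\"older in $\R^N$.

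Now extract a further subsequence so that $M_n \to M$ for some $M \geq 0$. Using the locally uniform convergence and the locally uniform boundedness of the product on the right-hand side, I can pass to the limit in \eqref{system blow-up} to obtain
\[
\Delta v_i = M\, v_i \prod_{j \neq i} v_j^2, \qquad v_i \geq 0 \quad \text{in } \R^N, \qquad i=1,2,3.
\]
If $M=0$, each $v_i$ is harmonic on $\R^N$, and being globally $\alpha$-H\"older continuous, Proposition \ref{prop: Liou eq}-($i$) forces it to be constant, contradicting the non-constancy of $v_1$. If $M>0$, the coupling terms $g_i(x,\hat{\mf{v}}_i) := M \prod_{j \neq i} v_j^2$ satisfy assumptions (H1)--(H2), and since $\alpha < \bar\nu = \alpha_{3,N}/3$, Theorem \ref{thm: liou sub} applies and guarantees that at least one component vanishes identically. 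Because $v_1$ is non-constant we have $v_1 \not\equiv 0$, hence the trivial component must be some $v_{j_0}$ with $j_0 \in \{2,3\}$. Substituting $v_{j_0} \equiv 0$ into the equation for $v_1$ yields $\Delta v_1 = 0$ in $\R^N$, and Proposition \ref{prop: Liou eq}-($i$) once more forces $v_1$ to be constant, a contradiction.

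The only delicate point in the argument is the identification of the limit of $\mf{v}_n$ with that of the renormalized sequence $\bar{\mf{v}}_n$, needed to bring the globally bounded H\"older seminorm of $\bar{\mf{v}}_n$ to bear on the limit so that the Liouville theorems of Section \ref{sec: liou} can be invoked; this is exactly what Lemma \ref{lem: basic prop}-($iii$) supplies. The remainder is a direct compactness argument combined with the nonexistence results previously established.
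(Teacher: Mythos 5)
Your proof is correct and follows essentially the same line of argument as the paper: extract a subsequence with $M_n \to M \ge 0$ and pass to the limit in \eqref{system blow-up}, dispatch $M=0$ via the classical Liouville theorem (Proposition~\ref{prop: Liou eq}), and for $M>0$ invoke Theorem~\ref{thm: liou sub} to kill one of $v_2,v_3$, forcing $v_1$ to be an entire harmonic, globally $\alpha$-H\"older, non-constant function. Your extra step of identifying the limit of $\mf{v}_n$ with that of $\bar{\mf{v}}_n$ to transfer the global H\"older bound is a point the paper states without comment; making it explicit is a harmless (indeed helpful) addition, not a change of route.
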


\begin{proof}
If $\{\mf{v}_n(0)\}$ and $\{M_n\}$ are both bounded, then up to a subsequence $M_n \to M \ge 0$ and $\mf{v}_n \to \mf{v}$ in $C^{0}_{\loc}(\R^N) \cap H^1_{\loc}(\R^N)$, and the limit function $(v_1,v_2,v_3)$ is globally $\alpha$-H\"older continuous, with $v_1$ non-constant, and satisfy the system
\beq\label{ent sys 308}
\begin{cases}
\Delta v_i = M v_i \prod_{j \neq i} v_j^2 & \text{in $\R^N$}\\
v_i \ge 0 & \text{in $\R^N$}.
\end{cases}
\eeq
If $M=0$, then $v_1$ is harmonic, globally H\"older continuous, and non-constant, a contradiction. We focus then on the case when $M>0$. By the maximum principle, either $v_i>0$ in $\R^N$, or $v_i \equiv 0$, and by Theorem \ref{thm: liou sub} at least one component between $v_2$ and $v_3$ must vanish identically, since $\alpha < \alpha_{3,N}/3$ by assumption ($v_1$ cannot vanish identically, since it is non-constant). Hence, also in this case $v_1$ is harmonic, globally H\"older continuous, and non-constant, a contradiction again. 
\end{proof}

It remains to consider the case when $\{\mf{v}_n(0)\}$ is bounded and $M_n \to +\infty$ up to a subsequence. 
%
%
\begin{lemma}\label{lem: v bdd e M unb}
It is not possible that $\{\mf{v}_n(0)\}$ is bounded and $\{M_n\}$ is unbounded.
\end{lemma}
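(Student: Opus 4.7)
The plan is to pass to the limit in the blow-up sequence and derive a contradiction by concluding that $v_1$ must be a non-constant, globally $\alpha$-H\"older continuous harmonic function on $\R^N$, in conflict with Proposition \ref{prop: Liou eq}-($i$).

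Since $\{\mf{v}_n(0)\}$ is bounded, Lemma \ref{lem: if i bdd} applies to every component, so up to a subsequence $v_{i,n} \to v_i$ locally uniformly and strongly in $H^1_{\loc}(\R^N)$, with each $v_i \ge 0$ globally $\alpha$-H\"older continuous and $v_1$ non-constant in $B_1$. Passing to the limit in $\Delta v_{i,n} \ge 0$ (in the sense of distributions) gives $\Delta v_i \ge 0$. Testing the equation for $v_{i,n}$ against $v_{i,n}\varphi^2$ with $\varphi \in C^\infty_c(\R^N)$ and applying Young's inequality yields the uniform bound $M_n \int \varphi^2 \prod_{j=1}^3 v_{j,n}^2 \,dx \le C$; since $M_n \to +\infty$ and the convergence is locally uniform, the limit must satisfy $v_1\, v_2\, v_3 \equiv 0$ on $\R^N$. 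The hypotheses of Corollary \ref{cor: liou seg} are then met (using $\alpha < \alpha_{3,N}/3$), so at least one $v_i$ vanishes identically; it cannot be $v_1$, so after relabelling I assume $v_3 \equiv 0$.

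Next I show $v_1 > 0$ on all of $\R^N$ via Lemma \ref{lem: min one 0}. Since $v_1 \not\equiv 0$, there is $z \in \R^N$ with $v_1(z) > 0$. For an arbitrary $x_0 \in \R^N$, setting $\rho = |z - x_0|$ and choosing $R > \rho$, the convergences $v_{3,n} \to 0$ (locally uniform and in $H^1$) and $v_{1,n} \to v_1$ (locally uniform) hold on $B_R(x_0)$, and $v_1|_{S_\rho(x_0)} \not\equiv 0$ because $z \in S_\rho(x_0)$. Lemma \ref{lem: min one 0} then gives $v_1 > 0$ on $B_\rho(x_0) \ni x_0$.

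Finally, I upgrade $v_1$ from subharmonic to harmonic. Fix $x_0 \in \R^N$ and $\rho > 0$; then $v_{1,n} \ge \delta := v_1(x_0)/2$ on $B_{2\rho}(x_0)$ for $n$ large, while $\epsilon_n := \|v_{3,n}\|_{L^\infty(B_{2\rho}(x_0))} \to 0$. Combining the pointwise bound $v_{1,n} v_{3,n}^2 \le \delta^{-1} \epsilon_n\, v_{1,n}^2 v_{3,n}$ with the $L^1$ estimate of Lemma \ref{lem: if i bdd}-($iii$) applied to $i=3$,
\[
\int_{B_{2\rho}(x_0)} |\Delta v_{1,n}|\,dx = M_n \int_{B_{2\rho}(x_0)} v_{1,n} v_{2,n}^2 v_{3,n}^2\,dx \le \frac{\epsilon_n}{\delta}\, M_n \int_{B_{2\rho}(x_0)} v_{1,n}^2 v_{2,n}^2 v_{3,n}\,dx \le \frac{C\epsilon_n}{\delta} \to 0.
\]
Testing against $\varphi \in C_c^\infty(B_{2\rho}(x_0))$ and passing to the limit yields $\Delta v_1 = 0$, so $v_1$ is harmonic on $\R^N$. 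Proposition \ref{prop: Liou eq}-($i$) then forces $v_1$ to be constant, contradicting Lemma \ref{lem: if i bdd}-($ii$). The main obstacle lies in this last step: one must exploit the delicate interplay between the vanishing $v_{3,n} \to 0$, the lower bound $v_{1,n} \ge \delta$ obtained from the previous paragraph, and the $L^1$ estimate carrying a factor of $v_{3,n}$ rather than $v_{3,n}^2$, in order to promote $v_1$ from merely subharmonic to genuinely harmonic.
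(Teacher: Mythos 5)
Your proof is correct and uses the same set of tools as the paper (Lemma \ref{lem: if i bdd}, Corollary \ref{cor: liou seg}, Lemma \ref{lem: min one 0}, Proposition \ref{prop: Liou eq}), but with a swapped order between the positivity step and the harmonicity step, and a different estimate to show $\Delta v_{1,n} \to 0$ in $L^1_{\loc}$. The paper first proves that $v_1$ is harmonic on its (a priori unknown) positivity set: it covers $B_r$ by $\bigcup_i (B_r \cap \{v_i = 0\})$ to obtain
\[
M_n \int_{B_r} v_{1,n}^2 v_{2,n}^2 v_{3,n}^2\,dx \le \sum_{i=1}^3 \|v_{i,n}\|_{L^\infty(B_r \cap \{v_i=0\})} \int_{B_r} M_n v_{i,n} \prod_{j\neq i} v_{j,n}^2\,dx \to 0,
\]
then divides by $\delta$ near a point where $v_1>0$, and only afterwards applies Lemma \ref{lem: min one 0} (via the subharmonicity of $v_1$ to justify $v_1|_{S_\rho}\not\equiv 0$) to globalize positivity. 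You instead invoke Lemma \ref{lem: min one 0} first, choosing $\rho=|z-x_0|$ so that a known positivity point lies on $S_\rho(x_0)$, which avoids the subharmonicity remark entirely and gives $v_1>0$ on all of $\R^N$ at once; then, with the uniform lower bound $v_{1,n}\ge\delta$ in hand, you prove $\|\Delta v_{1,n}\|_{L^1_{\loc}} \to 0$ through the cleaner pointwise inequality $v_{1,n} v_{3,n}^2 \le \delta^{-1}\epsilon_n\, v_{1,n}^2 v_{3,n}$ plus Lemma \ref{lem: if i bdd}-($iii$) applied only to $i=3$. Your version exploits the known global positivity of $v_1$ more aggressively and needs only one instance of the $L^1$ bound from Lemma \ref{lem: if i bdd}; the paper's covering estimate \eqref{765} is somewhat more self-contained since it does not presuppose strict positivity, which is why the paper states it first, but both routes close the argument correctly.
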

 \begin{proof}
Let $\{\mf{v}_n(0)\}$ be bounded, and $M_n \to +\infty$. By Lemma \ref{lem: if i bdd}, we deduce that the limits $v_i$ are subharmonic functions such that $v_1\,v_2\,v_3 \equiv 0$ in $\R^N$. Therefore, $(v_1,v_2,v_3)$ satisfies the assumptions of Corollary \ref{cor: liou seg}, whence it follows that one component must vanish identically. We cannot have $v_1 \equiv 0$, since it is non-constant. For concreteness, let $v_3 \equiv 0$ from now on.

Suppose that $v_1(x_0)>0$. We claim that $v_1$ is harmonic in a neighborhood of $x_0$. To prove the claim, let $r>0$. Since $v_1\,v_2\,v_3 \equiv 0$, 
\[
B_r \subset \bigcup_{i=1}^3 (B_r \cap \{v_i=0\}),
\]
and by local uniform convergence and Lemma \ref{lem: if i bdd} we infer that
\beq\label{765}
M_n \int_{B_r} v_{1,n}^2 v_{2,n}^2 v_{3,n}^2\,dx \le \sum_{i=1}^3  \|v_{i,n}\|_{L^\infty(B_r \cap \{v_i=0\})} \int_{B_r} M_n v_{i,n} \prod_{j \neq i} v_{j,n}^2\,dx \to 0.
\eeq
Now, by continuity and local uniform convergence, there exists $\rho, \delta>0$ such that
\[
\inf_{B_{\rho}(x_0)} v_{1,n} \ge \delta \quad \text{for $n$ sufficiently large}.
\]
Therefore, using also \eqref{765}, we obtain
\[
\|\Delta v_{1,n}\|_{L^1(B_{\rho}(x_0))} = M_n\int_{B_{\rho}(x_0)} v_{1,n} \, v_{2,n}^2 \, v_{3,n}^2\,dx  \le \frac1{\delta} M_n\int_{B_{\rho}(x_0)} v_{1,n}^2 \, v_{2,n}^2 \, v_{3,n}^2\,dx \to 0,
\]
and recalling that $v_{1,n} \to v_1$ in $H^1_{\loc}(\R^N)$, we deduce that $v_1$ is harmonic in $B_\rho(x_0)$. This argument shows that $v_1$ is harmonic in its positivity set. The same holds for $v_2$.

We are ready to reach a contradiction. Since $v_1$ is non-constant, and hence non-trivial, in any ball $B_\rho$ with $\rho>1$, while $v_{3,n} \to 0$, the assumptions of Lemma \ref{lem: min one 0} are satisfied (notice that, if $v_1$ is non-trivial in $B_\rho$, then $v_1|_{S_\rho} \not \equiv 0$ by subharmonicity). Therefore, $v_1>0$ in $B_\rho$ for every $\rho>1$, namely $v_1>0$ in $\R^N$. It is then a harmonic function in $\R^N$, globally H\"older continuous, and non-constant, a contradiction.
\end{proof}

\begin{proof}[Conclusion of the proof of Theorem \ref{thm: holder bounds}]
Lemmas \ref{lem: basic prop}-\ref{lem: v bdd e M unb} imply that $\{\mf{u}_\beta\}$ is bounded in $C^{0,\alpha}_{\loc}(\Omega)$, as claimed. This is true for every $\alpha \in (0,\alpha_{3,N}/3)$. In turn, by the Ascoli-Arzel\`a theorem, the local $C^{0,\alpha}$ convergence to some limit $\tilde{\mf{u}}$ follows, up to a subsequence. 

The other properties can be proved as in Lemmas \ref{lem: basic prop} and \ref{lem: v bdd e M unb}. We only give a sketch. By testing the equation for $u_{i,\beta}$ with a cutoff function $\varphi \in C^\infty_c(\R^N)$, we obtain
\[
\beta \int_{\Omega} \Big(\prod_{j \neq i} u_{j,\beta}^2 \Big) u_{i,\beta} \varphi \,dx = \int_{\Omega} u_{i,\beta} \Delta \varphi \,dx \le C.
\]
Thus, by local uniform convergence as $\beta \to +\infty$, the limit $\tilde{\mf{u}}$ satisfies the partial segregation condition $u_1\,u_2\,u_3 \equiv 0$ in $\Omega$. 

In order to show the strong $H^1_{\loc}$ convergence, we test at first the equation for $u_{i,\beta}$ with $u_{i,\beta} \varphi^2$, where $\varphi \in C^\infty_c(\Omega)$ is an arbitrary non-negative cutoff function, and integrate by parts. As in Lemma \ref{lem: basic prop}, we obtain boundedness in $H^1(\omega)$ for every $\omega \ssubset \Omega$, whence weak $H^1_{\loc}$-convergence follows. The strong convergence can be proved by testing the equation for $u_{i,\beta}$ with $(u_{i,\beta}-u_i) \varphi$, and integrating by parts (see Lemma \ref{lem: basic prop} again).

At this point, the fact that for every $\omega \ssubset \Omega$
\[
M_\beta \int_{\omega} \prod_{j=1}^3 u_{j,\beta}^2\,dx \to 0 \quad \text{for every $K \ssubset \Omega$ compact}
\]
can be proved exactly as in Lemma \ref{lem: v bdd e M unb}.
\end{proof}

\begin{remark}
For future convenience, we point out that the limitation $\alpha < \bar \nu= \alpha_{3,N}/3$ in Theorem \ref{thm: holder bounds} plays a role only in Lemmas \ref{lem: v e M bdd} and \ref{lem: v bdd e M unb}, where we apply Theorem \ref{thm: liou sub} and Corollary \ref{cor: liou seg}. In particular, we can increase the threshold $\bar \nu$ in Theorem \ref{thm: holder bounds} by establishing improved Liouville type theorems for globally H\"older continuous solutions to \eqref{ent sys 308}, minimal with respect to variations of the associated energy with compact support (it is plain that, if $\{M_n\}$ is bounded, so that $M_n \to M \ge 0$ up to a subsequence, the limiting profile $\mf{v}$ is itself minimal for the associated energy) and improved Liouville type theorems for globally H\"older continuous functions in $\mathcal{L}_{\loc}(\R^N)$. This observation will be used in the proof of optimal regularity, see \cite{ST2024}.
\end{remark}

\section{Proof of Theorem \ref{thm: min fixed traces}}\label{sec: corol fixed}

In the first part of this section, we focus on the uniform H\"older estimates up to the boundary. This is the most delicate point in Theorem \ref{thm: min fixed traces}. The rest of the proof is rather standard, and will be the content of Subsection \ref{sub: concl}

\subsection{Uniform H\"older estimates up to the boundary}\label{sub: boundary}

The strategy is analogue to the one used in the proof of Theorem \ref{thm: holder bounds}. By minimality, each $\mf{u}_\beta$ solves system \eqref{PSP} for the corresponding value of $\beta$. In particular, $u_{i,\beta}$ is subharmonic, and hence
\[
\sup_\beta \, \sup_{i=1,2,3} u_{i,\beta} \le \|\boldsymbol{\psi}\|_{L^\infty(\pa \Omega)}<+\infty,
\]
namely $\{\mf{u}_\beta\}$ satisfies assumption (h1) of Theorem \ref{thm: holder bounds}. Plainly, $\{\mf{u}_\beta\}$ also satisfies assumption (h2).

Now, for $\alpha \in (0,\alpha_{3,N}/3)$ fixed, we aim at proving that the family $\{\mf{u}_\beta\}_{\beta >1}$ admits a uniform bound on the $\alpha$-H\"older semi-norm, that is, there exists $C > 0$, independent of $\beta$, such that
\[
\sup_{i=1,2,3} \sup_{\substack{x \neq y \\ x,y \in \overline{\Omega}}}    \frac{ |u_{i,\beta}(x)-u_{i,\beta}(y)|}{|x-y|^{\alpha}} \leq C.
\]

Let us assume by contradiction that there exists a sequence $\beta_n \to +\infty$ and a corresponding sequence $\{\mf{u}_n:= \mf{u}_{\beta_n}\}$ such that
\[
    L_n := \sup_{i = 1, 2, 3} \sup_{\substack{x \neq y \\ x,y \in B_3}} \frac{ |u_{i,n}(x)- u_{i,n}(y)|}{|x-y|^{\alpha}} \to \infty \qquad \text{as $n \to +\infty$.}
\]
As in Theorem \ref{thm: holder bounds}, up to a relabelling, the supremum is achieved for $i = 1$ and at a pair of points $x_n, y_n \in \overline{\Omega}$, with $x_n \neq y_n$ and $|x_n-y_n| \to 0$ as $n \to \infty$. We consider the blow-up sequence
   \[
    v_{i,n}(x) := \frac{u_{i,n}(x_n + r_n x)}{L_n r_n^{\alpha}}, \quad \text{where} \quad r_n := |x_n -y_n| \to 0^+, \quad \text{and} \quad x \in \frac{\Omega - x_n}{r_n} =: \Omega_n,
\]
and we have two different possibilities according to the fact that $\dist(x_n,\pa \Omega)/r_n$ is bounded or not. In the latter case, the scaled domains $\Omega_n$ exhaust $\R^N$ as $n \to \infty$, and the contradiction is reached proceeding exactly as in the proof of Theorem \ref{thm: holder bounds}. If instead $\dist(x_n,\pa \Omega)/r_n$ remains bounded, then $\Omega_n$ tends to a half-space $\Omega_\infty$ and the previous proof has to be conveniently modified. At first, we observe that properties ($i$) and ($v$) of Lemma \ref{lem: basic prop} still hold in the present setting. Regarding property ($ii$), we have that 
\[
\begin{cases}
\Delta v_{i,n}=M_n v_{i,n} \prod_{j \neq i} v_{j,n}^2, \quad v_{i,n}>0 & \text{in $\Omega_n$}\\
v_{i,n} = \psi_{i,n} & \text{on $\pa \Omega_n$},
\end{cases}
\]
where
\[
 M_n:= \beta_n r_n^{2+4\alpha} L_n^{4}, \quad \text{and} \quad \psi_{i,n}(x) := \frac{\psi_i(x_n+r_nx)}{r_n^\alpha L_n}.
\] 
Since $\boldsymbol{\psi}_n$ is a H\"older scaling of a fixed Lipschitz functions, and since $\{\mf{v}_n\}$ has bounded $\alpha$-H\"older semi-norm, we can derive important information about the limiting behavior at the boundary of the half-space. We refer to \cite[Lemma 4.3]{ST23} for the proof.

\begin{lemma}\label{lem: on boundary}
It is possible to extend $\mf{v}_n$ outside $\Omega_n$ in a Lipschitz fashion, in such a way that:
\begin{itemize}
\item[($i$)] If $\{v_{i,n}(0)\}$ is bounded, then 
$v_{i,n} \to v_i$ in $C^{0,\alpha'}_{\loc}(\R^N)$ for every $0<\alpha'<\alpha$, up to a subsequence; moreover, the limit function $v_i$ attains a constant value on the boundary $\pa \Omega_\infty$.
\item[($ii$)] If $\{v_{i,n}(0)\}$ is unbounded, then $w_{i,n}(x) := v_{i,n}(x)-v_{i,n}(0)$ converges to $w_i$ in $C^{0,\alpha'}_{\loc}(\R^N)$ for every $0<\alpha'<\alpha$, up to a subsequence; moreover, the limit function $w_i$ attains a constant value on the boundary $\pa \Omega_\infty$.
\end{itemize}
\end{lemma}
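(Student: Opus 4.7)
The plan is to pick a Lipschitz extension of $\mathbf{v}_n$ to all of $\R^N$ that preserves the uniform $\alpha$-H\"older control, and then read off the constancy of the boundary trace from the fact that the Lipschitz constant of the rescaled boundary datum $\psi_{i,n}$ decays to zero.

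First I would extend each $\psi_i$ from $\overline{\Omega}$ to a globally Lipschitz function $\tilde\psi_i \in C^{0,1}(\R^N)$ with the same Lipschitz constant (McShane--Whitney), and then set
\[
\tilde v_{i,n}(x) := \begin{cases} v_{i,n}(x), & x \in \Omega_n, \\[0.3em] \dfrac{\tilde\psi_i(x_n + r_n x)}{r_n^\alpha L_n}, & x \in \R^N \setminus \Omega_n. \end{cases}
\]
This is continuous across $\partial\Omega_n$ since $v_{i,n}|_{\partial\Omega_n} = \psi_{i,n}$. The crucial scaling observation is that the outer piece has Lipschitz constant $\mathrm{Lip}(\psi_i)\, r_n^{1-\alpha}/L_n \to 0$, since $\alpha<1$, $r_n \to 0$ and $L_n \to +\infty$. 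On any bounded set $K$, interpolating the Lipschitz bound with $|\cdot|^\alpha$ gives a vanishing $\alpha$-H\"older seminorm for the outer piece; combined with the uniform $\alpha$-H\"older seminorm bound for $v_{i,n}$ inside $\Omega_n$ (property ($i$) of Lemma \ref{lem: basic prop}, which carries over to the present setting) and a triangle inequality applied along a segment crossing $\partial\Omega_n$ (well-defined for large $n$ because $C^1$ regularity of $\partial\Omega$ forces $\partial\Omega_n$ to flatten to the hyperplane $\partial\Omega_\infty$), one gets the uniform bound $[\tilde v_{i,n}]_{C^{0,\alpha}(K)} \le C(K)$.

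With this seminorm bound in hand, Ascoli--Arzel\`a delivers the two cases. In case ($i$), boundedness of $\{v_{i,n}(0)\}$ together with the seminorm bound yields a uniform $L^\infty$ bound on every compact set, and hence $\tilde v_{i,n} \to v_i$ in $C^{0,\alpha'}_{\loc}(\R^N)$ for every $\alpha' < \alpha$ along a subsequence. In case ($ii$), the shifted sequence $\tilde w_{i,n}(x) := \tilde v_{i,n}(x) - v_{i,n}(0)$ vanishes at the origin and inherits the same seminorm bound, so the same argument applies and produces $w_i$. Constancy of the trace on $\partial\Omega_\infty$ then follows directly from the scaling of the boundary datum: for any two sequences $z_{j,n} \in \partial\Omega_n$ with $z_{j,n} \to z_j \in \partial\Omega_\infty$ ($j=1,2$),
\[
|v_{i,n}(z_{1,n}) - v_{i,n}(z_{2,n})| = |\psi_{i,n}(z_{1,n}) - \psi_{i,n}(z_{2,n})| \le \mathrm{Lip}(\psi_i)\,\frac{r_n^{1-\alpha}}{L_n}\,|z_{1,n}-z_{2,n}| \to 0,
\]
and the same equality with $w_{i,n}$ replacing $v_{i,n}$ handles case ($ii$); passing to the limit in the locally uniform convergence yields $v_i(z_1) = v_i(z_2)$ (resp.\ $w_i(z_1) = w_i(z_2)$).

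The main technical point --- and the only real obstacle --- is the uniform $\alpha$-H\"older estimate across $\partial\Omega_n$; everything else is standard bookkeeping around a compactness argument. That estimate is safe because in the blow-up limit the domain flattens out, so two points on opposite sides of $\partial\Omega_n$ can be joined by a short segment meeting $\partial\Omega_n$ at a single intermediate point, and the triangle inequality transfers the inner and outer H\"older bounds to the combined extension without picking up any diverging constant.
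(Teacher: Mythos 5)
Your proof is correct and follows the natural approach; the paper itself defers to \cite[Lemma 4.3]{ST23} for this statement, and your construction matches the standard route. The two crucial observations are exactly right: the rescaled boundary datum has Lipschitz constant $\mathrm{Lip}(\psi_i)\,r_n^{1-\alpha}/L_n \to 0$, which (after interpolation on compact sets and the triangle-inequality gluing across $\partial\Omega_n$ via a crossing point on the segment $[x,y]$) yields the uniform $\alpha$-H\"older seminorm bound for the extension, and the same vanishing Lipschitz constant forces the boundary trace of the limit to be constant on $\partial\Omega_\infty$; note, as a minor remark, that the crossing-point argument works for any domain and needs only that one such point exists on $[x,y]$, so the flattening of $\partial\Omega_n$ is used only to identify $\Omega_\infty$ as a half-space, not for the seminorm estimate itself.
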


\begin{remark}\label{rem: if i bdd}
Furthermore, if $\{v_{i,n}(0)\}$ is bounded, then the thesis of Lemma \ref{lem: if i bdd} holds, with the only differences that in point ($iii$) we have to require $\omega \ssubset \Omega_\infty$, and in ($iv$) we have convergence in $H^1_{\loc}(\Omega_\infty)$.
\end{remark}

At this point the idea is to proceed as in the proof of Theorem \ref{thm: holder bounds}, discussing all the possible cases (all the components are unbounded at $0$, exactly $2$ components are unbounded at $0$, etc.), and obtaining a contradiction in all of them. In this perspective, it is important to observe that, thanks to the partial segregation condition and to the uniform boundedness of the $\alpha$-H\"older seminorm, for every $R>1$ at least one component of $\mf{v}_n$, say $v_{i_n,n}$, must vanish on some points of $\pa \Omega_\infty \cap B_R$. Since $i_n$ varies in the set of indexes $\{1,2,3\}$, which is discrete, up to a subsequence we can suppose that $i_n = \underline{i}$ for every $n$. Thus, Lemma \ref{lem: on boundary} ensures that $v_{\underline{i}}$, being constant on $\pa \Omega_\infty$, vanishes on the whole $\pa \Omega_\infty$. Moreover, recalling that $\{\mf{v}_n\}$ has locally bounded oscillation, $\{v_{\underline{i},n}\}$ is locally bounded in $\Omega_\infty$. We can actually prove a much stronger statement:

\begin{lemma}\label{lem: 0 half-space}
We have that $v_{\underline{i}} \equiv 0$ in $\Omega_{\infty}$.
\end{lemma}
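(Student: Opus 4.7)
My plan is to recognise this statement as a Phragm\'en--Lindel\"of collapse in the half-space. Without loss of generality I take $\Omega_\infty=\{x_N>0\}$. The material immediately preceding the statement already delivers the required facts: the existence of a point of $\partial\Omega_n\cap B_R$ where $v_{\underline{i},n}$ vanishes, combined with the normalisation of the $\alpha$-H\"older seminorm, forces $|v_{\underline{i},n}(0)|\le R^\alpha$, so that Lemma~\ref{lem: on boundary}($i$) and Remark~\ref{rem: if i bdd} apply. Consequently $v_{\underline{i},n}\to v_{\underline{i}}$ locally uniformly on $\R^N$ (after the Lipschitz extension) and in $H^1_{\loc}(\Omega_\infty)$, with $v_{\underline{i}}\ge 0$ and subharmonic in $\Omega_\infty$, $v_{\underline{i}}\equiv 0$ on $\partial\Omega_\infty$, and the global growth bound $v_{\underline{i}}(x)\le C(1+|x|^\alpha)$ inherited from the uniform H\"older seminorm.

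The core step is a harmonic majorant comparison. For each $R\ge 1$, I let $H_R$ denote the harmonic function on $D_R:=B_R\cap\Omega_\infty$ taking boundary values $C(1+R^\alpha)$ on $\partial B_R\cap\Omega_\infty$ and $0$ on $B_R\cap\partial\Omega_\infty$. Subharmonicity of $v_{\underline{i}}$ and the maximum principle give $v_{\underline{i}}\le H_R$ throughout $D_R$. Rescaling $\tilde H(y):=H_R(Ry)/(C(1+R^\alpha))$ yields a harmonic function in $D_1$ with values in $[0,1]$ vanishing on $B_1\cap\{y_N=0\}$. Schwarz odd reflection through the flat boundary extends $\tilde H$ to a harmonic function on $B_1$ bounded by $1$ in absolute value, and interior gradient estimates then produce $\tilde H(y)\le K\,y_N$ on $B_{1/2}$, with $K$ depending only on $N$. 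Unscaling yields
\begin{equation*}
v_{\underline{i}}(x)\le H_R(x)\le \frac{C\,K(1+R^\alpha)}{R}\,x_N \qquad \text{for all } x\in B_{R/2}\cap\Omega_\infty.
\end{equation*}

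Fixing any $x\in\Omega_\infty$ and sending $R\to\infty$, the prefactor behaves like $R^{\alpha-1}\to 0$, precisely because $\alpha<\bar\nu=\alpha_{3,N}/3\le 2/3<1$ by the choice made at the start of the proof. Hence $v_{\underline{i}}(x)=0$ for every $x\in\Omega_\infty$, which is the lemma. I do not anticipate a real obstacle: the whole Phragm\'en--Lindel\"of collapse is anchored on the strict inequality $\alpha<1$, which is guaranteed by the universal threshold $\bar\nu<1$ produced by the ACF formula of Section~\ref{sec: ACF}, and the Lipschitz extension supplied by Lemma~\ref{lem: on boundary} keeps the boundary comparison clean up to $\partial\Omega_\infty$.
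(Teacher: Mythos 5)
Your proof is correct but follows a genuinely different route from the paper's. The paper extends $v:=v_{\underline{i}}$ by zero across $\partial\Omega_\infty$, verifies that this extension is subharmonic and belongs to $H^1_{\loc}(\R^N)$, forms the even reflection $v'$, and then quotes the segregated two-component Liouville theorem (Proposition~\ref{prop: Liou sys 2}(ii)) to force one of $v,v'$ --- and hence, by symmetry, both --- to vanish. You instead run a direct Phragm\'en--Lindel\"of barrier comparison inside the half-space: compare $v_{\underline{i}}$ with the scaled harmonic barrier $H_R$, use Schwarz reflection plus interior gradient estimates to extract the linear bound $\tilde H(y)\le K y_N$, and let $R\to\infty$ so that the prefactor $R^{\alpha-1}\to 0$. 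Both routes hinge on the same input ($\alpha<1$, the zero boundary datum on $\partial\Omega_\infty$, and subharmonicity), but yours is self-contained and quantitative --- it does not require the subharmonic-extension computation, the reflection trick, or Proposition~\ref{prop: Liou sys 2} --- at the cost of being slightly less aligned with the Liouville-theorem machinery the paper has already set up and reuses elsewhere in Section~\ref{sec: hold bounds}. One implicit technicality worth stating: the boundary data of $H_R$ is discontinuous along the $(N-2)$-dimensional corner $\partial B_R\cap\partial\Omega_\infty$, so the comparison $v_{\underline{i}}\le H_R$ in $D_R$ uses the version of the maximum principle for bounded subharmonic functions that allows a polar exceptional set on the boundary; this is standard, but should be mentioned since the Perron solution $H_R$ will not be continuous at the corner.
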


\begin{proof}
Without loss, suppose that $\Omega_\infty= \R^N_+$. We know that $v:= v_{\underline{i}}$ is subharmonic in $\R^N_+$, $\alpha$-H\"older continuous up to the boundary, with $v=0$ on $\pa \R^N_+$. We extend $v$ as the trivial function outside of $\R^N_+$, and we claim that such extension, still denoted by $v$, is subharmonic on $\R^N$. Indeed, for any function $\phi \in C^\infty_c(\R^N)$ with $\phi \ge 0$ in $\R^N$, and for any $x_0 \in \pa \R^N_+$ and $r>0$, we have that for every $\eps>0$
\[
\begin{split}
\int_{B_r(x_0)} v \Delta \phi\,dx &= \int_{B_r(x_0) \cap \{x_N>\eps\}} v \Delta \phi\,dx + \int_{B_r(x_0)\cap \{x_N<-\eps\}} v \Delta \phi\,dx + \int_{B_r(x_0) \cap \{|x_N|<\eps\}} v \Delta \phi\,dx \\
& = \int_{B_r(x_0) \cap \{x_N>\eps\}} v \Delta \phi\,dx + O( \eps^{1+\alpha}) \ge - C \eps^{1+\alpha},
\end{split}
\]
and the subharmonicity follows. Now, since $v$ is subharmonic and continuous in $\R^N$, it is in $H^1_{\loc}(\R^N)$ (see e.g. \cite[Exercise 2.3]{PSU12}). 

Let us define $v'$ as the even reflection of $v$ across $\pa \R^N_+$. Clearly, $v$ and $v'$ are subharmonic continuous function in $H^1_{\loc}(\R^N)$, with disjoint positivity sets, globally H\"older continuous. Therefore, by Proposition \ref{prop: Liou sys 2}, one of them vanishes identically. By definition of $v'$, we infer that $v= v_{\underline{i}} \equiv 0$, as desired.
\end{proof}

The previous lemma is extremely useful: it ensures that, when $\Omega_n$ tends to a half-space, one component is not only bounded, but it vanishes identically in the limit. From now on, for the sake of concreteness, we suppose that $\underline{i}=3$, so that $v_{3,n} \to v_3 \equiv 0$ locally uniformly (notice that it cannot be $\underline{i}=1$, since $v_{1,n}$ has constant oscillation equal to $1$ in $\overline{B_1} \cap \overline{\Omega_\infty}$).

Now, in order to show the validity of estimate \eqref{global bounds}, we have to reach a contradiction in the following cases:
\begin{itemize}
\item[($i$)] exactly two components are unbounded at $0$;
\item[($ii$)] exactly one component is unbounded at $0$;
\item[($iii$)] $\{\mf{v}_n(0)\}$ is bounded.
\end{itemize}
We will frequently use the following elementary result (see \cite[Lemma 4.2]{ST23}).

\begin{lemma}\label{lem: liou half}
Let $H$ be a half-space, and suppose that $v \in C^0(\overline{H}) \cap C^2(H)$ is a harmonic function in $H$, with $v|_{\pa H} = const.$, and $v$ globally $\alpha$-H\"older continuous for some $\alpha \in (0,1)$. Then $v$ is constant in $H$.
\end{lemma}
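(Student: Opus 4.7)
The plan is to reduce to the standard Liouville theorem for globally H\"older continuous harmonic functions, i.e.\ Proposition \ref{prop: Liou eq}(i), via odd reflection across $\partial H$. Without loss of generality, assume $H=\R^N_+=\{x_N>0\}$, and let $c$ be the constant value attained by $v$ on $\partial H$. Define $w:=v-c$, which is harmonic in $H$, vanishes on $\partial H$, belongs to $C^0(\overline{H})\cap C^2(H)$, and has finite global $\alpha$-H\"older seminorm $[w]_{C^{0,\alpha}(H)}=[v]_{C^{0,\alpha}(H)}$.

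Extend $w$ to all of $\R^N$ by odd reflection:
\[
\tilde w(x',x_N):=\begin{cases} w(x',x_N) & \text{if }x_N\ge 0,\\ -w(x',-x_N) & \text{if }x_N<0.\end{cases}
\]
By the classical Schwarz reflection principle, the vanishing of $w$ on $\partial H$ together with harmonicity and boundary regularity guarantees that $\tilde w$ is harmonic on $\R^N$ (alternatively, one checks in the distributional sense that $\Delta\tilde w=0$, which is elementary using test functions supported across $\partial H$ together with the fact that $w=0$ on the hyperplane).

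The next step is to verify that $\tilde w$ is globally $\alpha$-H\"older continuous on $\R^N$, with seminorm at most $2[w]_{C^{0,\alpha}(H)}$. For two points on the same side of $\partial H$ the estimate is immediate (for the lower half-space one uses that reflection is an isometry). For $x=(x',x_N)$ with $x_N\ge 0$ and $y=(y',y_N)$ with $y_N<0$, exploit $w\big|_{\partial H}=0$:
\[
|\tilde w(x)-\tilde w(y)|\le |w(x',x_N)-w(x',0)|+|w(y',0)-w(y',-y_N)|\le [w]_{C^{0,\alpha}}\bigl(x_N^\alpha+|y_N|^\alpha\bigr).
\]
Since $|x-y|\ge |x_N-y_N|=x_N+|y_N|\ge \max(x_N,|y_N|)$, we conclude $|\tilde w(x)-\tilde w(y)|\le 2[w]_{C^{0,\alpha}}|x-y|^\alpha$.

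Finally, apply Proposition \ref{prop: Liou eq}(i) to $\tilde w$: a globally $\alpha$-H\"older continuous harmonic function on $\R^N$ must be constant. Since $\tilde w\equiv 0$ on $\partial H$, this constant is zero, so $w\equiv 0$ in $H$ and $v\equiv c$, which is the claim. The only subtle point is the preservation of the global H\"older seminorm across the hyperplane, handled by the calculation above; everything else is a standard application of the reflection principle and the earlier Liouville statement.
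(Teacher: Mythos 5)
Your proof is correct and uses the standard odd-reflection argument (Schwarz reflection principle plus the Liouville theorem for globally H\"older continuous harmonic functions, Proposition \ref{prop: Liou eq}(i)); the key point, that the H\"older seminorm is preserved up to a factor $2$ across $\partial H$ because $w$ vanishes there, is handled correctly. The paper itself does not prove this lemma but cites \cite[Lemma 4.2]{ST23}, which establishes it by essentially this same reflection argument.
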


We can proceed with the core of the proof of Estimate \eqref{global bounds}.

\begin{lemma}\label{lem: 2 a infty global}
Suppose that up to a subsequence $v_{1,n}(0), v_{2,n}(0) \to +\infty$. Then 
\[
M_n v_{1,n}^2(0) \, v_{2,n}^2(0) \le C
\]
for a positive constant $C>0$ independent of $n$. 
\end{lemma}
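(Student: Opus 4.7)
The strategy mirrors the interior analog Lemma \ref{lem: 2 a infty}, with two adaptations: all auxiliary estimates must be run on interior balls of the limit half-space $\Omega_\infty$, and the concluding Liouville step must be replaced by its half-space counterpart, Lemma \ref{lem: liou half}. The plan is to argue by contradiction and suppose, along a subsequence, that $M_n v_{1,n}^2(0) v_{2,n}^2(0) \to +\infty$. The bounded-oscillation property of $\{\mathbf{v}_n\}$ (cf.\ Lemma \ref{lem: basic prop}($iv$)) then forces $v_{j,n} \to +\infty$ locally uniformly on $\overline{\Omega_\infty}$ for $j = 1, 2$, while Lemma \ref{lem: 0 half-space} already provides $v_{3,n} \to 0$ locally uniformly on $\overline{\Omega_\infty}$. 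In particular, Step 1 of the proof of Lemma \ref{lem: 2 a infty}, where one rules out the possibility that the bounded component is $v_1$, is automatic here.

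Next, I would fix an arbitrary ball $B_{2\rho}(x_0) \ssubset \Omega_\infty$ and set $I_n := \inf_{B_{2\rho}(x_0)} M_n v_{1,n}^2 v_{2,n}^2 \to +\infty$. The inequality $-\Delta v_{3,n} \le - I_n v_{3,n}$ on $B_{2\rho}(x_0)$, together with the uniform bound of $v_{3,n}$ along $\partial B_{2\rho}(x_0)$, permits Lemma \ref{lem: decay} to yield the exponential decay $\|v_{3,n}\|_{L^\infty(B_\rho(x_0))} \le C e^{-\rho \sqrt{I_n}/2}$. Choosing a minimizer $\bar x_n$ of $M_n v_{1,n}^2 v_{2,n}^2$ on $B_\rho(x_0)$ and using bounded oscillation exactly as in the interior proof, one also gets $M_n (v_{1,n} v_{2,n})^2 \le 2 I_n$ on $B_\rho(x_0)$, whence
\[
|\Delta v_{1,n}(x)| = M_n v_{1,n}(x) v_{2,n}^2(x) v_{3,n}^2(x) \le C I_n\, e^{-\rho \sqrt{I_n}} \to 0
\]
uniformly on $B_\rho(x_0)$ (using that $v_{1,n} \ge 1$ eventually). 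Since $B_{2\rho}(x_0) \ssubset \Omega_\infty$ was arbitrary, this gives $\Delta v_{1,n} \to 0$ in $L^\infty_{\loc}(\Omega_\infty)$, and a fortiori in $L^1_{\loc}(\Omega_\infty)$.

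I would then pass to the limit on the shifted sequence $w_{1,n} := v_{1,n} - v_{1,n}(0)$: the proof of Lemma \ref{lem: if i unb 2} carries over on $\Omega_\infty$ with no change, so $w_{1,n} \to w_1$ strongly in $H^1_{\loc}(\Omega_\infty)$, with $w_1$ harmonic in $\Omega_\infty$. By Lemma \ref{lem: on boundary}($ii$), $w_1$ extends continuously to $\overline{\Omega_\infty}$ with a constant value on $\partial \Omega_\infty$, and it is globally $\alpha$-H\"older continuous and non-constant (since the H\"older quotient of $v_{1,n}$ is normalized to one on $\overline{B_1}\cap\overline{\Omega_n}$). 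Lemma \ref{lem: liou half} then forces $w_1$ to be constant, which is the desired contradiction.

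The main obstacle, and the reason why one cannot directly invoke Lemma \ref{lem: 2 a infty}, is that Lemma \ref{lem: decay} genuinely requires an interior ball $B_{2\rho}$; I handle this by running the whole exponential-decay analysis on interior balls of the half-space only. It is enough that $\Delta v_{1,n} \to 0$ on every compact subset of $\Omega_\infty$ to identify $w_1$ as harmonic in $\Omega_\infty$, with the boundary information supplied separately by Lemma \ref{lem: on boundary}($ii$); the half-space Liouville statement then closes the argument.
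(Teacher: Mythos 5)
Your proof is correct and follows the same overall strategy as the paper: contradiction, exponential decay of $v_{3,n}$ via Lemma \ref{lem: decay}, the bound $M_n(v_{1,n}v_{2,n})^2\le 2 I_n$ from bounded oscillation, the resulting $\Delta v_{1,n}\to 0$, and then the half-space Liouville theorem (Lemma \ref{lem: liou half}) applied to the limit of $w_{1,n}=v_{1,n}-v_{1,n}(0)$.

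The one place where you genuinely depart from the paper is in how the exponential decay of $v_{3,n}$ is obtained. The paper runs Lemma \ref{lem: decay} on balls $B_R(x_0)$ that may cross $\partial\Omega_n$: it observes that, since $v_{1,n}$ and $v_{2,n}$ blow up on $\partial\Omega_n\cap B_R$ and the boundary datum satisfies the partial segregation condition $\psi_1\psi_2\psi_3\equiv 0$, necessarily $v_{3,n}=0$ on $\partial\Omega_n\cap B_R$; extending $v_{3,n}$ by zero across the boundary then produces a continuous subsolution of $-\Delta v_{3,n}\le -I_n v_{3,n}$ on the full ball, to which Lemma \ref{lem: decay} applies. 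You instead restrict to interior balls $B_{2\rho}(x_0)\Subset\Omega_\infty$, where no extension is needed. This is a legitimate simplification: the paper itself only uses the resulting bound on $|\Delta v_{1,n}|$ ``in a neighborhood of $x_0$'' with $x_0\in\Omega_\infty$ interior, so interior balls suffice to conclude that $w_1$ is harmonic in the open half-space, and the boundary information ($w_1\equiv\mathrm{const.}$ on $\partial\Omega_\infty$, global H\"older continuity) comes independently from Lemma \ref{lem: on boundary}($ii$). Your route avoids the extension-by-zero argument entirely, at the small cost of not obtaining uniform control up to $\partial\Omega_\infty$ — which, as you correctly note, is not needed for the Liouville step. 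One point worth spelling out more explicitly is the non-constancy of $w_1$: this follows because $|w_1(0)-w_1(z)|=1$ for $z=\lim (y_n-x_n)/r_n$, and if both $0$ and $z$ lay on $\partial\Omega_\infty$ the boundary constancy would force $0=1$, so at least one of them is interior.
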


\begin{proof}
The proof is similar to the one of Lemma \ref{lem: 2 a infty}. We argue by contradiction, and suppose that up to a subsequence $M_n v_{1,n}^2(0) v_{2,n}^2(0) \to +\infty$ as $n \to \infty$. 

Since $\{\mf{v}_n\}$ has local bounded oscillation, for every $x_0 \in \Omega_\infty$ and $R>1$ we have that 
\[
\inf_{\pa \Omega_n \cap B_R(x_0)}  v_{1,n}, \ \inf_{\pa \Omega_n \cap B_R} v_{2,n} \to +\infty.
\]
By partial segregation of the boundary datum, this implies that $v_{3,n}|_{\pa \Omega_n \cap B_R(x_0)} =0$, so that we can extend $v_{3,n}$ as the $0$ function on $B_R(x_0) \setminus \Omega_n$, obtaining a continuous subsolution to 
\[
-\Delta v_{3,n} \le - I_n v_{3,n} 
\quad \text{in the whole $B_{R}(x_0)$},
\]
where
\[
I_n := M_n \inf_{B_{R}(x_0) \cap \Omega_n} v_{i_1,n}^2 \, v_{i_2,n}^2 \ge M_n \left(v_{i_1,n}(0)- C_R\right)^2 \left(v_{i_2,n}(0)- C_R\right)^2 \to +\infty.
\]
By Lemma \ref{lem: decay}, and recalling that $v_{3,n} \to 0$ and $I_n \to +\infty$, we infer that
\[
\sup_{B_{R/2}(x_0) \cap \Omega_n} v_{3,n} \le C \sup_{B_{R}(x_0) \cap \Omega_n} v_{3,n} e^{- \frac{R\sqrt{I_n}}{2}} \to 0. 
\]
As in Lemma \ref{lem: 2 a infty}, this estimate allows to show that
\[
\begin{split}
|\Delta v_{1,n}(x)| & \le  M_n v_{1,n}^2(x) \prod_{j \neq 1} v_{j,n}^2(x)  \le M_n \big( v_{1,n} v_{2,n}(x)\big)^2 C_R e^{-C_R\sqrt{I_n}}   \le  C_R  I_n  e^{-C_R\sqrt{I_n}} 
\end{split}
\]
for every $x \in B_R(x_0) \cap \Omega_n$, for every $R>1$ and $x_0 \in \Omega_\infty$. In particular, this holds true in a neighborhood of $x_0$, where we recall that $x_0 \in \Omega_\infty$ was arbitrarily chosen. This means that, if we consider $w_{1,n}:= v_{1,n}-v_{1,n}(0)$, then $w_{1,n} \to w_1$ locally uniformly in $\R^N$, with $w_1$ globally H\"older continuous in $\Omega_\infty$, non-constant, constant on the boundary (recall Lemma \ref{lem: on boundary}), and harmonic. Indeed, $|\Delta w_{1,n}| = |\Delta v_{1,n}| \to 0$ locally uniformly in $\R^N$. By Lemma \ref{lem: liou half}, this is the desired contradiction.
\end{proof}

\begin{lemma}\label{lem: 2 infty global}
It is not possible that $v_{1,n}(0), v_{2,n}(0) \to +\infty$.
\end{lemma}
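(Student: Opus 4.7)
The plan is to mirror the structure of Lemma \ref{lem: 2 infty} from the interior case, now leveraging the half-space Liouville-type result (Lemma \ref{lem: liou half}) together with the boundary information recorded in Lemma \ref{lem: on boundary}. Arguing by contradiction, I assume $v_{1,n}(0), v_{2,n}(0) \to +\infty$; since we have fixed $\underline{i}=3$, Lemma \ref{lem: 0 half-space} gives $v_3 \equiv 0$ in $\Omega_\infty$, and by equi-H\"older continuity $v_{3,n} \to 0$ locally uniformly on $\Omega_\infty$. By the just-proved Lemma \ref{lem: 2 a infty global}, we also have $M_n v_{1,n}^2(0) v_{2,n}^2(0) \le C$.

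Since $\{\mf{v}_n\}$ has locally bounded oscillation in $\Omega_\infty$, the ratios $v_{j,n}/v_{j,n}(0)$ converge to $1$ locally uniformly for $j=1,2$, so
\[
\left| M_n v_{1,n}^2 v_{2,n}^2 - M_n v_{1,n}^2(0) v_{2,n}^2(0) \right| \to 0
\]
locally uniformly, and in particular $\{M_n v_{1,n}^2 v_{2,n}^2\}$ is locally bounded. Since $v_{1,n}(x) \to +\infty$ locally uniformly, I then rewrite
\[
M_n v_{1,n}(x) v_{2,n}^2(x) = \frac{M_n v_{1,n}^2(x) v_{2,n}^2(x)}{v_{1,n}(x)} \longrightarrow 0
\]
locally uniformly on $\Omega_\infty$. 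Combined with $v_{3,n} \to 0$ locally uniformly, this yields
\[
|\Delta v_{1,n}(x)| \,=\, M_n v_{1,n}(x) v_{2,n}^2(x) v_{3,n}^2(x) \,\longrightarrow\, 0 \qquad \text{locally uniformly in } \Omega_\infty.
\]

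Now consider $w_{1,n}(x) := v_{1,n}(x) - v_{1,n}(0)$. By Lemma \ref{lem: on boundary}(ii), up to a subsequence $w_{1,n} \to w_1$ in $C^{0,\alpha'}_{\loc}(\R^N)$ for every $\alpha' < \alpha$, with $w_1$ globally $\alpha$-H\"older continuous in $\Omega_\infty$ and attaining a constant value on $\pa \Omega_\infty$. The local uniform vanishing of $\Delta w_{1,n} = \Delta v_{1,n}$ passes to the limit in the distributional sense, so $w_1$ is harmonic in $\Omega_\infty$. By Lemma \ref{lem: liou half}, $w_1$ must be constant. On the other hand, the blow-up normalization enforces $|w_{1,n}(0) - w_{1,n}((y_n - x_n)/r_n)| = 1$ with $|(y_n-x_n)/r_n|=1$; passing to a further subsequence with $(y_n-x_n)/r_n \to \bar y \in \overline{\Omega_\infty}$, the limit satisfies $|w_1(0) - w_1(\bar y)| = 1$, contradicting the constancy.

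I do not anticipate a substantial obstacle, as the argument is essentially a boundary-adapted version of Lemma \ref{lem: 2 infty}. The only delicate points are: (a) the use of the Lipschitz extension from Lemma \ref{lem: on boundary} to run Ascoli--Arzel\`a on the whole $\R^N$ while preserving the constancy of the limit on $\pa \Omega_\infty$, which is exactly what is needed to invoke Lemma \ref{lem: liou half}; and (b) ensuring that $v_{3,n} \to 0$ \emph{locally uniformly} (not merely a.e.), which follows from the uniform H\"older bound together with Lemma \ref{lem: 0 half-space}, and is what makes the estimate on $|\Delta v_{1,n}|$ truly uniform on compact sets.
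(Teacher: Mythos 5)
Your proof is correct and takes essentially the same route as the paper's: bound $M_n v_{1,n}^2 v_{2,n}^2$ locally via Lemma~\ref{lem: 2 a infty global} and locally bounded oscillation, combine with $v_{3,n}\to 0$ locally uniformly to conclude $\Delta v_{1,n}\to 0$, then pass to the shifted sequence $w_{1,n}$ and invoke Lemma~\ref{lem: on boundary} and Lemma~\ref{lem: liou half}. The only (harmless) cosmetic difference is that you show $M_n v_{1,n} v_{2,n}^2 \to 0$ by dividing by $v_{1,n}$, whereas the paper simply absorbs $v_{1,n}\le C v_{1,n}^2$, and you spell out explicitly the normalization argument giving $|w_1(0)-w_1(\bar y)|=1$, which the paper takes as understood.
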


\begin{proof}
In case $v_{1,n}(0), v_{2,n}(0) \to +\infty$, we know by Lemma \ref{lem: 2 a infty global} that 
\[
M_n v_{1,n}^2(0) v_{2,n}^2(0) \le C.
\]
Since $v_{j,n}/v_{j,n}(0) \to 1$ locally uniformly, for $j=1, 2$, it follows that
\[
|M_n v_{1,n}^2 v_{2,n}^2 - M_n v_{1,n}^2(0) v_{2,n}^2(0)| \to 0
\]
uniformly on any compact set of $\Omega_\infty$. Therefore, $M_n v_{1,n}^2 v_{2,n}^2$ is in turn locally bounded, and recalling that $v_{3,n} \to v_3 \equiv 0$ locally uniformly, we deduce that 
\[
M_n v_{1,n}(x) v_{2,n}^2(x) v_{3,n}^2(x) \le C M_n v_{1,n}^2(x) v_{2,n}^2(x) v_{3,n}^2(x)\le C v_{3,n}^2(x) \to 0;
\]
hence $\Delta v_{1,n} \to 0$ locally uniformly in $\R^N$, which in turn implies that the function $w_{1,n} =v_{1,n}-v_{1,n}(0)$ converges to a harmonic limit $w_1$. Since moreover $w_1$ is also globally H\"older continuous and non-constant, but constant on the boundary of $\pa \Omega_\infty$, Lemma \ref{lem: liou half} gives a contradiction.
\end{proof}

\begin{lemma}\label{lem: 1 a infty 1 glob}
It is not possible that there exists exactly one component $v_{j,n}(0) \to +\infty$ (with the remaining ones being bounded at $0$), which in addition satisfies
\[
M_n v_{j,n}^2(0) \le C.
\]
\end{lemma}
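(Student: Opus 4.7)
My plan is to adapt the interior argument of Lemma \ref{lem: 1 a infty 1} to the half-space blow-up geometry, replacing the classical Liouville theorem for harmonic functions by the half-space Liouville statement of Lemma \ref{lem: liou half}. I would argue by contradiction, assuming that exactly one component $v_{j,n}(0) \to +\infty$ while the other two stay bounded at $0$ and $M_n v_{j,n}^2(0) \le C$. First, observe that $j$ cannot be $3$: indeed, by Lemma \ref{lem: 0 half-space} we have $v_{3,n} \to 0$ locally uniformly in $\Omega_\infty$, and by locally bounded oscillation this forces $\{v_{3,n}(0)\}$ to be bounded. Hence $j \in \{1,2\}$. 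From $v_{j,n}(0) \to +\infty$ and $M_n v_{j,n}^2(0) \le C$ we deduce $M_n \to 0$; together with bounded oscillation, this gives $M_n v_{j,n}^2 \to \lambda$ locally uniformly on $\Omega_\infty$, for some constant $\lambda \ge 0$ (up to a subsequence).

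In the case $j=2$, the component $v_{1,n}$ is bounded at $0$, so by Lemma \ref{lem: on boundary}-($i$) and Remark \ref{rem: if i bdd} it converges (up to a subsequence) locally uniformly and in $H^1_{\loc}(\Omega_\infty)$ to a globally $\alpha$-H\"older continuous limit $v_1$, constant on $\pa \Omega_\infty$. Passing to the limit in
\[
\Delta v_{1,n}= \bigl(M_n v_{2,n}^2\bigr)\, v_{1,n}\, v_{3,n}^2,
\]
and using $M_n v_{2,n}^2 \to \lambda$, $v_{1,n} \to v_1$, and $v_{3,n} \to 0$ locally uniformly, I obtain $\Delta v_1 = 0$ in $\Omega_\infty$. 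The H\"older seminorm normalization of Lemma \ref{lem: basic prop}-($i$) guarantees $|v_{1,n}(0)-v_{1,n}((y_n-x_n)/r_n)|=1$ with $|(y_n-x_n)/r_n|=1$, so after extracting a further subsequence with $(y_n-x_n)/r_n \to z^* \in \overline{\Omega_\infty}$, $|z^*|=1$, the limit $v_1$ is non-constant on $\overline{\Omega_\infty}$. This contradicts Lemma \ref{lem: liou half}, which forces $v_1$ to be constant in $\Omega_\infty$ (and hence in $\overline{\Omega_\infty}$ by continuity).

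The case $j=1$ is the more delicate one and is where I expect the main work. Since $v_{1,n}$ itself is unbounded at $0$, I would pass to $w_{1,n}(x) := v_{1,n}(x) - v_{1,n}(0)$ as in Lemma \ref{lem: if i unb}. The key estimate is that $\Delta v_{1,n} \to 0$ locally uniformly on $\Omega_\infty$: using locally bounded oscillation $v_{1,n} \le v_{1,n}(0) + C$ and the boundedness of $v_{2,n}$,
\[
0\le \Delta v_{1,n} = M_n v_{1,n} v_{2,n}^2 v_{3,n}^2 \le C\, M_n v_{1,n}(0)\, v_{3,n}^2 = C\,\sqrt{M_n v_{1,n}^2(0)}\,\sqrt{M_n}\, v_{3,n}^2 \to 0,
\]
since $M_n \to 0$, $M_n v_{1,n}^2(0) \le C$, and $v_{3,n}\to 0$. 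Combining this with Lemma \ref{lem: on boundary}-($ii$) and the strong-$H^1_{\loc}$ argument of Lemma \ref{lem: if i unb 2}, I get $w_{1,n} \to w_1$ with $w_1$ harmonic in $\Omega_\infty$, constant on $\pa \Omega_\infty$, and globally $\alpha$-H\"older continuous. The same seminorm argument as before produces $z^* \in \overline{\Omega_\infty}$ with $|z^*|=1$ and $|w_1(0)-w_1(z^*)|=1$, so $w_1$ is non-constant on $\overline{\Omega_\infty}$, contradicting Lemma \ref{lem: liou half}.

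The main technical obstacle is precisely the estimate $\Delta v_{1,n} \to 0$ in Case $j=1$, where the unboundedness of $v_{1,n}$ must be absorbed by the decay of $v_{3,n}$ and the smallness of $M_n$; the trick is to split $M_n v_{1,n}(0) = \sqrt{M_n v_{1,n}^2(0)}\cdot\sqrt{M_n}$ so that the uniformly bounded factor is multiplied by the vanishing one. Once this is in place, the passage to a harmonic, globally H\"older, non-constant limit with constant boundary trace and the final application of Lemma \ref{lem: liou half} are straightforward.
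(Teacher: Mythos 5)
Your proposal is correct, and the overall architecture matches the paper's: rule out $j=3$ (the paper does this implicitly, having already arranged $v_3\equiv 0$), treat the two remaining cases, in the bounded case pass to the limit in the equation and apply the half-space Liouville theorem (Lemma~\ref{lem: liou half}) to reach a contradiction with the normalization of Lemma~\ref{lem: basic prop}-($i$), and in the case $j=1$ show $\Delta v_{1,n}\to 0$ and invoke Lemma~\ref{lem: if i unb 2} plus Lemma~\ref{lem: liou half} once more. The one place where you genuinely deviate from the paper is the derivation of $\Delta v_{1,n}\to 0$ when $j=1$: the paper follows the conclusion of Lemma~\ref{lem: 1 a infty 1}, using the integral bound $M_n\int_\omega v_{1,n}^2\,v_{2,n}^2\,v_{3,n}\,dx\le C$ from Lemma~\ref{lem: if i bdd}-($iii$)/Remark~\ref{rem: if i bdd} together with division by $\inf v_{1,n}\to+\infty$, obtaining $L^1_{\loc}$ convergence. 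You instead obtain the (stronger) locally uniform pointwise bound directly from the hypotheses $M_n v_{1,n}^2(0)\le C$ and $M_n\to 0$ via the factorization $M_n v_{1,n}(0)=\sqrt{M_n v_{1,n}^2(0)}\,\sqrt{M_n}$, using only the local boundedness of $v_{2,n}$ and $v_{3,n}$. Your route is a bit more elementary (it does not need the tested-equation integral bound, and in fact does not even need $v_{3,n}\to 0$, merely its local boundedness), and it feeds into Lemma~\ref{lem: if i unb 2} just as well since uniform convergence on compacts implies $L^1_{\loc}$ convergence; so the two arguments buy the same conclusion, with yours being marginally shorter.
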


\begin{proof}
We denote by $j \in \{1,2\}$ the index such that $v_{j,n}(0) \to +\infty$ up to a subsequence, and $i$ the other. Since $v_{j,n}/v_{j,n}(0) \to 1$ locally uniformly in $\Omega_\infty$, we have that $\{M_n v_{j,n}^2\}$ is locally bounded as well, and in fact
\[
|M_n v_{j,n}^2 - M_n v_{j,n}^2(0) | \to 0
\]
uniformly on any compact set of $\Omega_\infty$. Therefore, by taking the limit in the equation for $v_{i,n}$, we deduce that there exists a constant $\lambda \ge 0$ such that
\[
\Delta v_{i} = \lambda v_3^2 v_{i} =0 \quad \text{in $\Omega_\infty$}.
\]
Since we also know by Lemma \ref{lem: on boundary} that $v_{i}$ is constant on $\pa \Omega_\infty$, and globally H\"older continuous, Lemma \ref{lem: liou half} implies that $v_{i}$ is constant, and hence $i=2$ and $j=1$. At this point, exactly as in the conclusion of the proof of Lemma \ref{lem: 1 a infty 1}, we can prove that $\Delta v_{1,n} \to 0$ in $L^1_{\loc}(\Omega_\infty)$, and Lemma \ref{lem: if i unb 2} ensures that $w_{1,n} =v_{1,n}-v_{1,n}(0)$ converges to a harmonic limit $w_1$. This gives a contradiction as usual.
\end{proof}

\begin{lemma}
It is not possible that there exists exactly one component $v_{j,n}(0) \to +\infty$ (with the remaining ones being bounded at $0$).
\end{lemma}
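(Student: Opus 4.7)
The plan is to mimic the strategy of the interior proof (Lemma \ref{lem: 1 a infty 2}), replacing the classical Liouville theorem by its half-space version Lemma \ref{lem: liou half}, and exploiting the fact that the vanishing component is already pre-identified as $v_3$ by Lemma \ref{lem: 0 half-space}. I argue by contradiction: suppose that exactly one index $\underline{j}$ satisfies $v_{\underline{j},n}(0) \to +\infty$, the others being bounded at $0$. By Lemma \ref{lem: 1 a infty 1 glob} I can further assume $M_n v_{\underline{j},n}^2(0) \to +\infty$, whence, by locally bounded oscillation, $\inf_K M_n v_{\underline{j},n}^2 \to +\infty$ for every compact $K \subset \Omega_\infty$. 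Since $v_{3,n}(0) \to v_3(0) = 0$, the index $\underline{j}$ lies in $\{1,2\}$; let $i$ be the remaining index in $\{1,2\}$, and observe that by Remark \ref{rem: if i bdd}, $v_{i,n} \to v_i$ uniformly and in $H^1_{\loc}(\Omega_\infty)$.

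The first step is to show that $w_{\underline{j}}$ is constant. Using Lemma \ref{lem: if i bdd}(iii) applied to the bounded component $v_{3,n}$, for every $\omega \ssubset \Omega_\infty$ one has
\[
\|\Delta v_{\underline{j},n}\|_{L^1(\omega)} = M_n \int_\omega v_{\underline{j},n}\, v_{i,n}^2\, v_{3,n}^2\,dx \,\le\, \frac{\sup_\omega v_{3,n}}{\inf_\omega v_{\underline{j},n}}\, M_n \int_\omega v_{\underline{j},n}^2\, v_{i,n}^2\, v_{3,n}\,dx \,\longrightarrow\, 0,
\]
because $\sup_\omega v_{3,n}\to 0$ and $\inf_\omega v_{\underline{j},n}\to +\infty$. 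Lemma \ref{lem: if i unb 2} then gives $w_{\underline{j},n} \to w_{\underline{j}}$ strongly in $H^1_{\loc}(\Omega_\infty)$ with $w_{\underline{j}}$ harmonic; being also globally $\alpha$-H\"older continuous and constant on $\pa\Omega_\infty$ (Lemma \ref{lem: on boundary}), Lemma \ref{lem: liou half} forces $w_{\underline{j}} \equiv \mathrm{const}$. This rules out $\underline{j}=1$ at once, since $v_{1,n}$ has oscillation $1$ on $\overline{B_1}\cap\overline{\Omega_\infty}$ by construction, and so does $w_{1,n}$.

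For $\underline{j}=2$, the constancy of $w_2$ together with $w_{2,n}(0)=0$ yields $v_{2,n}(x) = v_{2,n}(0)+o(1)$ locally uniformly, whence $M_n v_{2,n}^2(x) = I_n(1+o(1))$ with $I_n := M_n v_{2,n}^2(0) \to +\infty$. Pick any $x_0 \in \Omega_\infty$ with $v_1(x_0)>0$ and $B_{2\rho}(x_0) \ssubset \Omega_\infty$; by local uniform convergence $v_{1,n} \ge \delta > 0$ on $B_{2\rho}(x_0)$ for large $n$, so
\[
-\Delta v_{3,n} \,\le\, -\tfrac{\delta^2}{2}\, I_n\, v_{3,n} \qquad \text{in $B_{2\rho}(x_0)$},
\]
and Lemma \ref{lem: decay} delivers $\sup_{B_\rho(x_0)} v_{3,n} \le C e^{-C\sqrt{I_n}}$. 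Consequently $|\Delta v_{1,n}| \le C I_n e^{-2C\sqrt{I_n}} \to 0$ uniformly on $B_\rho(x_0)$, so $v_1$ is harmonic on its positivity set $\{v_1>0\} \cap \Omega_\infty$.

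The closing step, which is the main obstacle, consists in propagating positivity of $v_1$ to the whole half-space. Start from $x_0 \in \Omega_\infty$ with $v_1(x_0)>0$ (which exists because $v_1$ is non-constant on $\overline{B_1}\cap\overline{\Omega_\infty}$). For any ball $B_R(x_0) \ssubset \Omega_\infty$ on which $v_1$ is non-trivial, the non-negativity and subharmonicity of $v_1$ combined with the maximum principle imply $v_1|_{S_R(x_0)} \not\equiv 0$; since $v_{3,n} \to 0$ uniformly and in $H^1_{\loc}$, Lemma \ref{lem: min one 0} applies and yields $v_1>0$ in $B_R(x_0)$. A chain of such balls, all compactly contained in $\Omega_\infty$, promotes this to $v_1>0$ throughout $\Omega_\infty$ (the positivity set is open and, by the above, also closed in the connected set $\Omega_\infty$). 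Thus $v_1$ is harmonic in $\Omega_\infty$, globally $\alpha$-H\"older, and constant on $\pa\Omega_\infty$, and Lemma \ref{lem: liou half} forces $v_1$ to be constant, contradicting its normalized oscillation $1$ on $\overline{B_1}\cap\overline{\Omega_\infty}$. The delicate point throughout is to keep every auxiliary ball compactly inside $\Omega_\infty$, so that the interior minimality of $\mf{v}_n$ against compactly supported variations, on which Lemma \ref{lem: min one 0} relies, remains available.
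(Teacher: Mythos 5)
Your proof is correct and uses the same battery of tools as the paper (the $L^1_{\loc}$ estimate on $\Delta v_{\underline{j},n}$ combined with Lemma~\ref{lem: if i unb 2}, the exponential decay estimate of Lemma~\ref{lem: decay}, the minimality Lemma~\ref{lem: min one 0}, and the half-space Liouville Lemma~\ref{lem: liou half}), but the two stages are carried out in the opposite order. The paper first studies the \emph{bounded} component $v_i$: it shows $v_i$ is harmonic on $\{v_i>0\}$, then splits into two cases according to whether $v_i$ vanishes on $\partial\Omega_\infty$ (handled as in Lemma~\ref{lem: 0 half-space}) or is strictly positive there (handled via a tubular neighborhood plus Lemma~\ref{lem: min one 0}), concludes $v_i$ is constant and hence $i=2$, $j=1$, and only then runs the $\Delta v_{1,n}\to 0$ argument to get a contradiction through $w_1$. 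You instead open with the $\Delta v_{\underline{j},n}\to 0$ argument, which makes $w_{\underline{j}}$ constant and rules out $\underline{j}=1$ at once, and close by showing $v_1$ is harmonic on its positivity set and propagating $v_1>0$ across $\Omega_\infty$ by a chain of balls, so that connectedness forces $\{v_1>0\}=\Omega_\infty$. The reordering is a genuine reorganization, and your chain-of-balls propagation replaces the paper's dichotomy on the boundary value of $v_i$ by a single argument that works uniformly; the paper's route has the minor advantage of not requiring the global positivity of the non-vanishing component before invoking Lemma~\ref{lem: liou half}, as it finishes directly with $w_1$. Both are valid, and your care in keeping all auxiliary balls compactly contained in $\Omega_\infty$ (so that the interior minimality underlying Lemma~\ref{lem: min one 0} is available) is exactly the precaution the paper itself flags.
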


\begin{proof}
We denote by $j \in \{1,2\}$ the index such that $v_{j,n}(0) \to +\infty$ up to a subsequence, and $i$ the other. By Lemma \ref{lem: 1 a infty 1 glob}, we have that $M_n v_{j,n}^2(0) \to +\infty$. We claim that $v_{i}$ is harmonic when positive. To prove the claim, notice that, by continuity and uniform local convergence, if $x_0 \in \Omega_\infty$ and $v_{i}(x_0)>0$, then there exist $\rho, \delta>0$ such that
\[
v_{i,n}(x) \ge \delta \quad \text{for every $x \in B_{2\rho}(x_0) \subset \Omega_\infty$, for every $n$ large}.
\]
Therefore, denoting by
\[
I_n:= \inf_{B_{2\rho}(x_0)} M_n v_{j,n}^2  \to +\infty,
\]
we have that
\[
-\Delta v_{3,n} \le -\delta^2 I_n v_{3,n}, \quad v_{3,n} \ge 0 \quad \text{in $B_{2\rho}(x_0)$},
\]
and moreover $v_{3,n}$ is bounded in $B_{2\rho}(x_0)$. The decay estimate in Lemma \ref{lem: decay} gives 
\[
\sup_{B_\rho(x_0)} v_{3,n} \le Ce^{- C\sqrt{I_n}}.
\]
Therefore, 
\[
\begin{split}
\sup_{x \in B_{\rho}(x_0)} |\Delta v_{i,n}(x)| &= \sup_{x \in B_{\rho}(x_0)} M_n v_{j,n}^2(x) v_{3,n}^2(x) v_{i_2,n}(x) \\
& \le  C I_n(1+o_n(1)) e^{- C\sqrt{I_n}} \to 0
\end{split}
\]
as $n \to \infty$ (we used once again that $v_{j,n}$ has locally bounded oscillation), and the claim follows.

Now, $\Delta v_{i}=0$ in $\{v_{i}>0\}$, $v_{i}$ is constant on $\pa \Omega_\infty$, and $v_{i}$ is globally H\"older continuous in $\overline{\Omega_\infty}$. If $v_{i}=0$ on $\pa \Omega_\infty$, then as in Lemma \ref{lem: 0 half-space} we conclude that $v_{i} \equiv 0$ in $\Omega_\infty$, and necessarily $i=2$ and $j=1$. If instead $v_{i} = c >0$ on $\pa \Omega_\infty$, then the global H\"older continuity implies that $v_{i} \ge c/2$ in a tubular neighborhood of $\pa \Omega_\infty$. Using this fact and Lemma \ref{lem: min one 0} (which holds true provided that $B_R(x_0) \subset \Omega_\infty$), it is not difficult to deduce that $v_{i}>0$ in the whole half-space $\Omega_\infty$. As a result, $v_{i}$ must be constant, and also in this case $i=2$ and $j=1$.  

To sum up, we have that $v_{1,n} \to +\infty$ locally uniformly, while $v_{2,n} \to const.$ and $v_{3,n} \to 0$ locally uniformly. Recall also that, thanks to the boundedness of $\{v_{3,n}(0)\}$, for every $\omega \ssubset \Omega_\infty$ we have 
\[
M_n \int_{\omega} v_{1,n}^2\,v_{2,n}^2 v_{3,n}\,dx \le C
\]
(see Remark \ref{rem: if i bdd}). Therefore,
\[
M_n \int_{\omega} v_{1,n}^2\,v_{2,n}^2 v_{3,n}^2\,dx \to 0,
\]
whence it follows that
\[
M_n \int_{\omega} v_{1,n}\,v_{2,n}^2 v_{3,n}^2\,dx \le M_n \int_{\omega} \frac{v_{1,n}^2}{\inf_\omega v_{1,n}} \,v_{2,n}^2 v_{3,n}^2\,dx \to 0.
\]
Hence $\Delta v_{1,n} \to 0$ locally uniformly in $\R^N$, the function $w_{1,n} =v_{1,n}-v_{1,n}(0)$ converges to a harmonic limit $w_1$, and $w_1$ is also globally H\"older continuous and non-constant, but constant on $\pa \Omega_\infty$, in contradiction with Lemma \ref{lem: liou half}.
\end{proof}

\begin{proof}[Conclusion of the proof of the uniform H\"older estimate of $\{\mf{u}_\beta\}$]
It remains to show that it is not possible that $\{\mf{v}_n(0)\}$ is bounded. Suppose that this is the case, and suppose moreover that $\{M_n\}$ is bounded. Then, up to a subsequence, $\mf{v}_n \to \mf{v}$ locally uniformly in $\Omega_\infty$, $M_n \to M \ge 0$, and $\mf{v}$ solves 
\[
\Delta v_i = M v_i \prod_{j \neq i} v_j^2 \quad \text{in $\Omega_\infty$}.
\]
We also know that $v_3 \equiv 0$, and hence $v_1$ is harmonic in a half-space, globally H\"older continuous, non-constant, with constant boundary datum, a contradiction. 
Suppose now that $M_n \to +\infty$. As in Lemma \ref{lem: v bdd e M unb}, we show that $v_1$ is harmonic when positive, and, being non-constant (and hence non-trivial), Lemma \ref{lem: min one 0} ensures that $v_1>0$ in $\Omega_\infty$. Thus, we reach the same contradiction as before, and the proof of the validity of the uniform H\"older estimate up to the boundary is complete.
\end{proof}

\subsection{Conclusion of the proof of Theorem \ref{thm: min fixed traces}}\label{sub: concl} Theorem \ref{thm: holder bounds} and the uniform H\"older estimate up to the boundary imply that up to a subsequence $\mf{u}_\beta \to \tilde{\mf{u}}$ in $H^1_{\loc}(\Omega)$ and in $C^{0,\alpha}(\overline{\Omega})$, for every $\alpha \in (0,\bar \nu)$. The only things to prove are that the limit is minimal for the problem, and that the $H^1$ convergence is global in $\Omega$. Let us set
\[
c_\beta:= \inf_{\mf{u} \in H_{\boldsymbol{\psi}}} J_\beta(\mf{u}, \Omega),
\]
and recall the definition of $c_\infty$ given in \eqref{def c infty}.
We observe that, if $\mf{u}$ satisfies the partial segregation condition, then $J_\beta(\mf{u}, \Omega)$ is independent of $\beta$, and is equal to the Dirichlet energy of $\mf{u}$. Therefore $c_\beta \le c_\infty$. Then, by the minimality of $\mf{u}_\beta$, for every $\beta>1$ we have $J_\beta(\mf{u}_\beta) \le c_\infty$. Since moreover $u_{i,\beta}\equiv \psi_i$ on $\pa \Omega$, the uniform $H^1(\Omega,\R^3)$ boundedness of $\{\mf{u}_\beta\}$ follows. Hence, up to a subsequence, $\mf{u}_\beta \rightharpoonup \tilde{\mf{u}}$ weakly in $H^1(\Omega,\R^3)$ and a.e. in $\Omega$, and we recall that $\tilde{\mf{u}}$ satisfies the partial segregation condition.

Now, by the the minimality of $\mf{u}_\beta$ and weak convergence,
\begin{align*}
c_\infty & \le \sum_{i=1}^3 \int_{\Omega} |\nabla \tilde u_{i}|^2\,dx \le \liminf_{\beta \to \infty} \sum_{i=1}^3 \int_{\Omega} |\nabla u_{i,\beta}|^2\,dx\\
&  \le \limsup_{\beta \to \infty} J_\beta(\mf{u}_\beta,\Omega) =\limsup_{\beta \to \infty} c_\beta\le c_\infty.
\end{align*}
This means that all the previous inequalities are indeed equalities, and in particular: 
\begin{itemize}
\item $\|\nabla u_{i,\beta}\|_{L^2(\Omega)} \to \|\nabla u_i\|_{L^2(\Omega)}$, which together with the weak convergence and the fact that $\mf{u}_\beta = \boldsymbol{\psi}$ on $\partial \Omega$, for every $\beta$, ensures that $\mf{u}_\beta \to \mf{u}$ strongly in $H^1(\Omega,\R^k)$ (recall that $\Omega$ is bounded);
\item we have that 
\[
 \lim_{\beta \to \infty} \sum_{i=1}^3 \int_{\Omega} |\nabla u_{i,\beta}|^2 \\
 = \lim_{\beta \to \infty} J_\beta(\mf{u}_\beta,\Omega),
\]
so that  
\[
\lim_{\beta \to +\infty} \beta \int_{\Omega} \prod_{j=1}^3 u_{j,\beta}^2\,dx = 0.
\]
\item we have that
\[
c_\infty= \sum_{i=1}^3 \int_{\Omega} |\nabla \tilde u_{i}|^2\,dx,
\]
which proves the minimality of $\tilde{\mf{u}}$. 
\end{itemize}

\section{Proof of Theorem \ref{thm: bas prop}}\label{sec: prop}

We start with a preliminary result. Let $\Omega \subset \R^N$ be a domain, $M>0$, and let $\mf{v} \in H^1_{\loc}(\Omega) \cap C(\Omega)$, $\mf{v} \not \equiv 0$, satisfy
\beq\label{coex}
\begin{cases}
\Delta v_i = M v_i \prod_{j \neq i} v_j^2 & \text{in $\Omega$} \\
v_i \ge 0 & \text{in $\Omega$}.
\end{cases}
\eeq

\begin{lemma}[Local Pohozaev identity]\label{lem: poh}
For every $x_0 \in \Omega$ and $r \in (0, \dist(x_0,\pa \Omega))$, we have that
\[
\begin{split}
r \int_{S_r(x_0)} \Big( \sum_{i=1}^3 |\nabla v_{i}|^2 + & M \prod_{j=1}^3 v_{j}^2 \Big)d\sigma = (N-2)\int_{B_r(x_0)} \sum_{i=1}^3 |\nabla v_{i}|^2\,dx \\
&+ N \int_{B_r(x_0)} M \prod_{j=1}^3 v_{j}^2\,dx + 2r \int_{S_r(x_0)} \sum_{i=1}^3 (\pa_\nu v_{i})^2\,d\sigma.
\end{split}
\]
\end{lemma}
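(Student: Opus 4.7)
The plan is to derive the identity via the classical Pohozaev trick: for each $i=1,2,3$, multiply the equation $\Delta v_i = M v_i \prod_{j\neq i} v_j^2$ by the radial test vector $(x-x_0)\cdot \nabla v_i$, integrate over $B_r(x_0)$, and sum over $i$. Before doing so, I would first note that although $\mf{v}$ is only assumed to lie in $H^1_{\loc}(\Omega)\cap C(\Omega)$, the right-hand side $M v_i \prod_{j\neq i} v_j^2$ is a polynomial in the components, so a standard bootstrap via elliptic regularity gives $\mf{v}\in C^\infty_{\loc}(\Omega)$. This makes the forthcoming pointwise integration by parts rigorous.

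The left-hand side is handled by the classical identity: using the decomposition $\Delta v_i\,\pa_k v_i = \pa_j(\pa_j v_i\, \pa_k v_i) - \tfrac12 \pa_k|\nabla v_i|^2$, integrating by parts against $(x-x_0)^k$ and exploiting $\pa_j(x-x_0)^k = \delta_{jk}$, together with the fact that on $S_r(x_0)$ we have $\nu = (x-x_0)/r$ (so $(x-x_0)\cdot \nabla v_i = r\,\pa_\nu v_i$ and $(x-x_0)\cdot\nu = r$), one obtains, for each $i$,
\[
\int_{B_r(x_0)}\!\!\Delta v_i\,(x-x_0)\!\cdot\!\nabla v_i\,dx = r\!\int_{S_r(x_0)}\!\!(\pa_\nu v_i)^2\,d\sigma + \tfrac{N-2}{2}\!\int_{B_r(x_0)}\!\!|\nabla v_i|^2\,dx - \tfrac{r}{2}\!\int_{S_r(x_0)}\!\!|\nabla v_i|^2\,d\sigma.
\]

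The key algebraic observation for the right-hand side is that the ternary interaction term has a gradient structure when summed over $i$:
\[
\sum_{i=1}^3 v_i \prod_{j\neq i} v_j^2\,\nabla v_i = \tfrac12 \,\nabla\!\left(\prod_{j=1}^3 v_j^2\right).
\]
Hence, after multiplication by $M$ and pairing with $(x-x_0)$, the sum over $i$ of the right-hand side contributions becomes $\tfrac{M}{2}\int_{B_r(x_0)} (x-x_0)\!\cdot\!\nabla\!\big(\prod_j v_j^2\big)\,dx$. Integrating by parts once more yields $\tfrac{Mr}{2}\int_{S_r(x_0)} \prod_j v_j^2\,d\sigma - \tfrac{NM}{2}\int_{B_r(x_0)} \prod_j v_j^2\,dx$. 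Summing the $i$-identities on the left and equating with the simplified right-hand side, then multiplying by $-2$ and rearranging, yields the claimed identity.

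The main potential obstacle is regularity, but, as noted above, smoothness of $\mf{v}$ follows by bootstrap from the smooth polynomial nonlinearity and $\mf{v}\in C(\Omega)\cap H^1_{\loc}(\Omega)$, so all integrations by parts are legitimate. The only genuinely new ingredient beyond the standard scalar Pohozaev computation is the divergence identity for the coupling term displayed above, which collapses the three interaction contributions into a single gradient and is exactly the feature that makes the statement clean in the ternary setting.
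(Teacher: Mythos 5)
Your argument is correct and follows the same route as the paper: multiply the $i$-th equation by $(x-x_0)\cdot\nabla v_i$, integrate by parts over $B_r(x_0)$, and sum over $i$, using the fact that $\sum_i v_i \prod_{j\neq i} v_j^2\,\nabla v_i = \tfrac12\nabla\big(\prod_j v_j^2\big)$ to collapse the interaction terms into a single gradient. The paper compresses this into ``standard computations as in the classical Pohozaev identity,'' whereas you carry out the Rellich identity and the extra integration by parts explicitly, and you additionally make the (correct and worthwhile, if standard) remark that elliptic bootstrap gives the smoothness needed to justify the pointwise manipulations.
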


\begin{proof}
We multiply the equation for $v_{i}$ by $\nabla v_{i}(x) \cdot (x-x_0)$, integrate over $B_r(x_0)$, and take the sum over $i$ from $1$ to $3$: we obtain
\[
\int_{B_r(x_0)} \Big( \sum_i \nabla v_{i} \cdot \nabla (\nabla v_{i} \cdot (x-x_0))  + M \sum_i v_{i} \nabla v_{i} \cdot (x-x_0) \prod_{j \neq i} v_{j}^2 \Big)dx 
= r\int_{S_r(x_0)} \sum_i (\pa_\nu v_{i})^2\,d\sigma 
\]
Now standard computations as in the classical Pohozaev identity give the result. 
\end{proof}

\begin{proof}[Proof of Theorem \ref{thm: bas prop}]
%
Plainly, by minimality, each $\mf{v}_n$ solves system \eqref{coex} in $\Omega$ with $M=M_n \to +\infty$. The regularity of $\mf{v}$ follows then by Theorem \ref{thm: holder bounds}. Let $K \subset \R^N$ be a compact set. By ($iii$) and the local uniform convergence, we deduce that $v_1\,v_2\,v_3 \equiv 0$ in $\R^N$. Moreover, exactly as in Lemma \ref{lem: v bdd e M unb}, points ($ii$) and ($iii$) in Definition \ref{def: L} allow to prove that each $v_i$ is harmonic in its positivity set. 
%
%
%
%
%
%
%
%
%

%
%
Let now $x_0 \in \Omega$ and $r \in (0,(0,\dist(x_0,\pa \Omega))$. We observe that, by Lemma \ref{lem: poh}, for $r_0<r$
\[
\begin{split}
\int_{A_{r_0,r}(x_0)} \Big( \sum_{i=1}^3 |\nabla v_{i,n}|^2 + & M_n \prod_{j=1}^3 v_{j,n}^2 \Big)dx = \int_{r_0}^r \frac{N-2}{s}\int_{B_s(x_0)} \sum_{i=1}^3 |\nabla v_{i,n}|^2\,dx \, ds \\
&+  \int_{r_0}^r \frac{N}{s} \int_{B_s(x_0)} M_n \prod_{j=1}^3 v_{j,n}^2\,dx \, ds + 2 \int_{A_{r_0,r}(x_0)} \sum_{i=1}^3 \left(\nabla v_{i,n} \cdot \frac{x-x_0}{|x-x_0|}\right)^2\,dx,
\end{split}
\]
where $A_{r_0,r}(x_0) := B_r(x_0) \setminus \overline{B_{r_0}(x_0)}$. Recalling points ($ii$) and ($iii$) in the definition of $\mathcal{L}_{\loc}(\Omega)$, we can take the limit as $n \to \infty$, by dominated convergence, and deduce that
\[
\int_{A_{r_0,r}(x_0)}  \sum_{i=1}^3 |\nabla v_{i}|^2 \,dx  = \int_{r_0}^r \frac{N-2}{s}\int_{B_s(x_0)} \sum_{i=1}^3 |\nabla v_{i}|^2\,dx \, ds 
+  2 \int_{A_{r_0,r}(x_0)} \sum_{i=1}^3 \left(\nabla v_{i} \cdot \frac{x-x_0}{|x-x_0|}\right)^2\,dx,
\]
for every $r_0<r$. By the fundamental theorem of calculus, this completes the proof.
\end{proof}

\end{document}